\definecolor{lightgray}{gray}{0.9}
\newtheorem{theorem}{\bfseries Theorem}[section]
\newtheorem{theoremx}{\bfseries Theorem}
\newtheorem{corollary}[theorem]{\bfseries Corollary}
\newtheorem{proposition}[theorem]{\bfseries Proposition}
\newtheorem{lemma}[theorem]{\bfseries Lemma}
\theoremstyle{definition}
\newtheorem{remark}[theorem]{\bfseries Remark}
\newtheorem{definition}[theorem]{\bfseries Definition}
\newtheorem{example}[theorem]{\bfseries Example}
\newtheorem{notation}[theorem]{Notation}
\crefname{theoremx}{theorem}{theorems}
\Crefname{theoremx}{Theorem}{Theorems}
\newcommand{\secref}[1]{\S\ref{#1}}
\renewcommand{\leq}{\leqslant}
\renewcommand{\geq}{\geqslant}
\tikzset{edge/.style = {->,> = latex}}
\newcommand\OO{\Omega}
\newcommand{\Oh}{\mathcal{O}}
\renewcommand{\leq}{\leqslant}
\renewcommand{\geq}{\geqslant}
\newcommand{\abs}[1]{\left|\mathinner{#1}\right|}
\newcommand{\Abs}[1]{\left\Vert\mathinner{#1}\right\Vert}
\newenvironment{proof*}[1]
  {%
   \begin{proof}}
  {\end{proof}}
\newcommand{\PSPACE}{\ensuremath{\mathsf{PSPACE}}}
\newcommand{\NSPACE}{\ensuremath{\mathsf{NSPACE}}}
\newcommand{\DG}{Dahmani and Guirardel}
\newcommand{\Qier}{quasi-isometrically embeddable rational}
\newcommand{\qier}{qier}
\newcommand{\N}{\mathbb N}
\newcommand{\qgeod}{quasigeodesic}
\newcommand{\qgeods}{quasigeodesics}
\newcommand{\langT}{\mathcal T}
\newcommand{\control}{A}
\title[The complexity of solution sets to equations in hyperbolic groups]{The complexity of solution sets to \\equations in hyperbolic groups}
\author{Laura Ciobanu}
\address{School of Mathematical and Computer Sciences,
 Heriot-Watt University, 
 Edinburgh EH14 4AS,
 Scotland}
\email{l.ciobanu@hw.ac.uk}
\author{Murray Elder}
\address{School of Mathematical and Physical Sciences, University of Technology Sydney, Ultimo NSW 2007, Australia}
\email{murray.elder@uts.edu.au}
\date{\today}
\keywords{Hyperbolic group, Diophantine problem, existential theory, {\ensuremath{\mathsf{PSPACE}}},  L-system, EDT0L, Copying Lemma}
\subjclass[2010]{03D05,  	
20F65,   
20F70,  	
	68Q25,  
	68Q45  	
	}
\thanks{Research supported by Australian Research Council (ARC) Project DP160100486,   EPSRC grant EP/R035814/1, a Follow-On Grant from the International Centre of Mathematical Sciences (ICMS), Edinburgh, and an LMS Scheme 2 grant.}
\begin{document}
\maketitle

\begin{abstract}
We show that the full set of solutions to systems of equations and inequations in a hyperbolic group, as shortlex geodesic words
(or any regular set of quasigeodesic normal forms), 
  is  an EDT0L language whose 
  specification can be  computed in \NSPACE$(n^2\log n)$ for the torsion-free case and \NSPACE$(n^4\log n)$ in the torsion case. 
  Furthermore, in the presence of effective \Qier\ constraints, we show that the full set of solutions to systems of equations in a hyperbolic group remains  EDT0L.

  Our work combines the geometric results of Rips, Sela, \DG\ on the decidability of the existential theory of hyperbolic groups with the work of computer scientists including Plandowski, Je\.z, Diekert and others  on \PSPACE\ algorithms to solve equations in free monoids and groups using compression, and involves an intricate language-theoretic analysis.

\end{abstract}

\section{Introduction}

Let $G$ be a hyperbolic group with finite symmetric generating set $S$. In this paper we show that any system of equations in $G$ has solutions which, when written as shortlex representatives over $S$, 
admit a particularly simple description as formal languages, and moreover, this description can be given in very low space complexity. Our work combines the geometric results for determining the satisfiability of equations in hyperbolic groups of Rips, Sela, \DG\ \cite{DG,RS95}, with recent tools developed in theoretical computer science which give \PSPACE \ algorithms for solving equations in semigroups and groups \cite{CDE, DEijac,dgh01, DiekJezK,MR3571087, Jez2,Jez1}. 

The satisfiability of equations in torsion-free hyperbolic groups is decidable by the work of Rips and Sela \cite{RS95}, who reduced the problem  to solving equations in free groups, and then called on Makanin's algorithm \cite{mak83a} for free groups. 
Kufleitner proved  \PSPACE\ for decidability  in the torsion-free case \cite{DIP-1922}, without an explicit complexity bound, by following Rips-Sela and then using Plandowski's result  \cite{Plandowski}.  
\DG\  extended Rips and Sela's work to all
 hyperbolic groups (with torsion), by showing that it is sufficient to solve systems in virtually free groups, which they then reduced to systems of {\em twisted} equations in free groups \cite{DG}.

The first algorithmic description of {\em all} solutions to a given equation over a free group is due to Razborov \cite{MR1320290, MR755958}.
His description became known as a \emph{Makanin-Razborov (MR) diagram}, and this concept was then generalised to hyperbolic groups by Reinfeld and Weidmann \cite{RW14}, and to relatively hyperbolic groups by Groves \cite{MR2209374}. 
While in theory MR diagrams can be  used to algebraically produce the solutions of an equation via composition of group homomorphisms, it is infeasible to use this approach to directly obtain solutions as freely reduced  or shotlex representative words, as cancellations in the images of these homomorphisms cannot be controlled. Also, it is extremely complicated to explicitly produce a Makanin-Razborov diagram for a given equation even for free groups, and this has been done only in very few cases (\cite{MR2542213}). For the special case of quadratic equations, Grigorchuck and Lysionok gave efficient algorithms to obtain the solutions in \cite{GLquadratic}.

Here, we describe the combinatorial structure of solution sets of systems of equations: we show that they form an {\em EDT0L language} over the generating set $S$.
Roughly speaking, this means that there is a set $C\supseteq S$ and a {\em seed} word $c_0\in C^*$ so that every solution can be obtained by applying to $c_0$ a certain set of endomorphisms of the free monoid $C^*$. The set of endomorphisms is described by a finite labeled directed graph. (A simple EDT0L language can be seen in Example \ref{eg:AlexL}.) Our results are that we can algorithmically construct this graph, together with the set $C$ and $c_0$, and hence a finite description of the solution set, and moreover we can produce this description in 
 \PSPACE\ (see  \secref{sec:pspace} for space complexity definitions). The language-theoretic characterisation of solution sets had been open for a number of years even for free groups, and while this was settled by \cite{CDE}, it is remarkable that the EDT0L characterisation for free groups is so robust that it holds for the much bigger class of hyperbolic groups.
 
More specifically, we combine  Rips, Sela, \DG's approach with recent work of the authors with Diekert \cite{CDE,DEijac} to obtain the following results. An {\em inequation} is simply an expression using $\neq$ instead of $=$. The acronym {\em \qier} stands for {\em \Qier} and is defined in Subsection~\ref{subsec:qierDefn}.

\begin{theoremx}[Torsion-free]
\label{thm:IntroTorsionFree}
Let $G$ be a torsion-free hyperbolic group with finite symmetric generating set $S$. 
Let $\Phi$ be a system of equations and inequations
of size $n$ with constant size  effective \qier\  constraints  (see \cref{sec:notationSolns,defn:explicit} for  precise definitions). 
 Then \begin{enumerate}\item the set of all solutions, as tuples of shortlex geodesic words over $S$, 
is EDT0L, and the algorithm which on input $\Phi$ prints a description for the EDT0L grammar runs in $\NSPACE(n^2\log n)$.
 \item 
 it can be decided in $\NSPACE(n^2\log n)$ whether or not the solution set of $\Phi$ is empty, finite or infinite.
\end{enumerate}
\end{theoremx}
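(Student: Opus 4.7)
The plan is to reduce the problem to solving a system of equations with rational constraints over a free group, and then apply the EDT0L plus $\NSPACE(n \log n)$ result of the authors and Diekert \cite{CDE, DEijac}.

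First, I would invoke the Rips--Sela reduction \cite{RS95}: given a system $\Phi$ of size $n$ over the torsion-free hyperbolic group $G$, the \emph{canonical representatives} construction produces a finite disjunction of systems $\Psi_i$ of equations with rational constraints over a free group $F$ of rank depending only on $G$, such that solutions of $\Phi$ correspond, through a rational map, to solutions of the $\Psi_i$. The canonical representatives machinery blows the size up to at most $N = \Oh(n^2)$, accounting for the quadratic factor in the final space bound. Inequations and \qier\ constraints on $\Phi$ translate into rational constraints on the variables of the $\Psi_i$: \qier\ subsets of $G$ pull back to rational subsets of $F$ under the canonical representative map (this is essentially the content of the effectiveness hypothesis), while an inequation $u \ne v$ becomes membership of an auxiliary variable in the complement of a single word, which is rational.

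Second, applying \cite{CDE, DEijac}, each $\Psi_i$ of size $N = \Oh(n^2)$ over $F$ with rational constraints has solution set that is an EDT0L language over the generators of $F$, with a grammar computable in $\NSPACE(N \log N) = \NSPACE(n^2 \log n)$; a disjoint union over $i$ remains EDT0L. To express solutions of $\Phi$ as tuples of shortlex geodesic words over $S$, I would compose this EDT0L grammar with a finite-state transducer implementing the projection from canonical representatives in $F$ to shortlex normal forms over $S$. Hyperbolic groups are shortlex automatic, so this transducer has constant size depending only on $G$ and $S$, and EDT0L languages are closed under rational transductions; this yields an EDT0L description over $S$ inside the same space bound.

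For part (2), emptiness, finiteness and infiniteness of an EDT0L language reduce to reachability and cycle-detection questions in the labeled directed graph describing the composition of endomorphisms in the grammar; these problems are decidable in nondeterministic logarithmic space in the grammar size, hence within $\NSPACE(n^2 \log n)$. The main obstacle will be tight space accounting in the Rips--Sela step: the canonical-representatives procedure is naturally a search over a large decomposition tree, and in order to stay within $\NSPACE(n^2 \log n)$ rather than raw polynomial space one must nondeterministically guess branch choices on the fly and reuse space at each stage, rather than materialise the full tree or the full free-group system at once. A secondary technical point is verifying that the effectiveness of \qier\ constraints is preserved under both the canonical-representative map and the subsequent shortlex transduction, so that the rational constraints required by \cite{CDE, DEijac} can be produced explicitly within the stated space bound.
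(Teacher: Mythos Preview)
Your overall architecture---Rips--Sela reduction to free-group systems of size $\Oh(n^2)$, then the CDE algorithm, then a post-processing step to land on shortlex normal forms---matches the paper's. The genuine gap is in the last step.

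You write that you will ``compose this EDT0L grammar with a finite-state transducer implementing the projection from canonical representatives in $F$ to shortlex normal forms over $S$'', relying on the claim that ``EDT0L languages are closed under rational transductions''. This is false: EDT0L is closed under homomorphism and intersection with regular sets, but \emph{not} under inverse homomorphism (see \cref{prop:closureET0L}), hence not under rational transductions. Your post-processing step would only yield an ET0L language, not EDT0L, and the headline statement of the theorem would be lost. This is exactly the obstacle the paper isolates in \cref{sec:Copy}: converting a covering set of quasigeodesic solutions into the set of shortlex representatives requires an inverse-homomorphism-like step, and the paper recovers EDT0L only by first \emph{doubling} the EDT0L covering set (\cref{lem:doubleME}), passing through the asynchronous equality automaton for quasigeodesics (\cref{prop:reg-hyp}), and then invoking the Ehrenfeucht--Rozenberg Copying Lemma (\cref{lem:copyER,lem:copyME}) to force determinism back. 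Without this trick your argument proves only ET0L in $\NSPACE(n^2\log n)$.

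A smaller point: you assert that the map from canonical representatives to shortlex normal forms is realised by a constant-size finite-state transducer. This is not immediate; what one actually has (\cref{prop:reg-hyp}) is an asynchronous 2-tape automaton accepting pairs $(u,v)$ of quasigeodesics with $u=_G v$, and the paper threads this through carefully in \cref{prop:shortlex-hyp}. Similarly, the handling of \qier\ constraints is not via pullback to $F$ but via \cref{prop:DG9.4complexity}: one computes the set of all $(\lambda_G,\mu_G)$-quasigeodesics lying in each constraint and intersects after the fact. These are repairable, but the EDT0L closure claim is the point where your proof actually fails.
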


\begin{theoremx}[Torsion]
\label{thm:IntroTorsion}
Let $G$ be a  hyperbolic group with torsion, with finite symmetric generating set $S$. 
Let $\Phi$ be a system of equations and inequations
of size $n$ with constant size effective  \qier\  constraints  (see \cref{sec:notationSolns,defn:explicit} for  precise definitions). 
 Then \begin{enumerate}\item the set of all solutions, as tuples of shortlex geodesic words over $S$, 
is EDT0L, and the algorithm which on input $\Phi$ prints a description for the EDT0L grammar runs in $\NSPACE(n^4\log n)$.
 \item 
 it can be decided in $\NSPACE(n^4\log n)$ whether or not the solution set of $\Phi$ is empty, finite or infinite.
\end{enumerate}
\end{theoremx}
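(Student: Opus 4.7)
The plan is to mirror the strategy behind \cref{thm:IntroTorsionFree} while carrying out the additional reductions required to handle torsion. The backbone is \DG's algorithmic reduction of the Diophantine problem for a hyperbolic group $G$ with torsion to systems of \emph{twisted} equations in a free group, factored through an intermediate step in virtually free groups; the resulting twisted system is then fed into the EDT0L+\PSPACE\ machinery developed by the authors with Diekert in \cite{CDE,DEijac}, after which the language of solutions is lifted, still in EDT0L, back to shortlex geodesic tuples over $S$.

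In more detail, I would first invoke the \DG\ reduction \cite{DG} to replace $\Phi$ by a system of twisted equations over a free group $F$ sitting as a finite-index subgroup of a virtually free group $H$, with the twists being the automorphisms of $F$ induced by conjugation by coset representatives of $F$ in $H$. The \qier\ constraints and the inequations of $\Phi$ are encoded as effective rational constraints on the twisted variables, using the fact that \qier\ subsets pull back to rational subsets of $F$ along the $H$-to-$G$ map. A careful reading of \cite{DG} gives that this reduction produces a twisted system of size $\Oh(n^2)$, with a constant-size alphabet and automorphism set (depending only on $G$), while preserving effectiveness of the constraints. Next, I would apply the construction from \cite{DEijac} which, given a twisted system of size $m$ with effective rational constraints, outputs in $\NSPACE(m^2 \log m)$ an EDT0L grammar whose language is exactly the set of solution tuples, written as freely reduced words over $F$. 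Substituting $m = \Oh(n^2)$ gives the target bound of $\NSPACE(n^4 \log n)$.

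The main obstacle, and the step requiring the most care, is to convert the EDT0L grammar for solutions as freely reduced words over $F$ into an EDT0L grammar for solutions as shortlex geodesic words over $S$ in $G$. The key tool is closure of EDT0L under composition with rational transductions, together with the observation that the map from words over $F$ (tagged with coset labels) to shortlex representatives over $S$ is realised by a rational transducer derived from the automatic structure of $G$; the same transducer also implements the passage from $H$-words to $G$-words and enforces the \qier\ constraints via the asynchronous rational transducers guaranteed by \cref{defn:explicit}. The delicate point is that in going through $H$ and down to $G$, cancellations and torsion relators can shrink words unpredictably, so one must combine a \qgeodesic\ normal form for $H$ with the shortlex normal form on $G$, and verify that the composed transducer produces precisely one shortlex representative per $G$-element. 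Everything must be executed in a space-bounded model: intermediate automata and transducers are never instantiated in full, but guessed on-the-fly and verified, using nondeterminism together with Savitch-type recursion to keep the total working space within the $\NSPACE(n^4 \log n)$ budget.

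Finally, deciding emptiness, finiteness or infiniteness of the solution set reduces to the corresponding decision problems for the constructed EDT0L language. Emptiness amounts to reachability of a terminal tuple in the control graph of the grammar, solvable in space polynomial in the grammar description; finiteness reduces to detecting whether the grammar admits derivations of unbounded length, i.e.\ cycle detection in the control graph carrying a productive letter, also in polynomial space; complementing then uses Immerman--Szelepcs\'enyi. Since the grammar itself is produced in $\NSPACE(n^4 \log n)$ and the additional checks fit within the same space bound, part (2) follows.
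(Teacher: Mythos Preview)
Your proposal has a genuine gap at the conversion step. You write that ``the key tool is closure of EDT0L under composition with rational transductions''. This is false: EDT0L is \emph{not} a full AFL, and in particular is not closed under inverse homomorphism (hence not under rational transduction). The paper states this explicitly in \secref{sec:EDT0L}. So the passage from solutions written as words over your free group $F$ (or over the virtually free group) to shortlex geodesics over $S$ cannot be done by simply composing with a rational transducer while staying inside EDT0L; you would only get ET0L that way.

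The paper's actual mechanism for this step is the content of \cref{sec:Copy} and \cref{prop:shortlex-hyp}, and it is quite different from what you sketch. One first \emph{doubles} the EDT0L covering solution set (\cref{lem:doubleME}), producing words of the form $w_1\#\dots\#w_r f(w_1\#\dots\#w_r)$. One then performs the operations that destroy determinism (inverse homomorphism through the asynchronous 2-tape automaton of \cref{prop:reg-hyp}, intersection with the regular set $\langT_{\#r}$), obtaining an ET0L language. But because the target $\langT$ is a set of \emph{unique} normal forms, the resulting language again has the doubled shape $u\, f(u)$, and the Copying Lemma of Ehrenfeucht--Rozenberg (\cref{lem:copyER,lem:copyME}) recovers EDT0L from this. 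In the torsion case there is an additional wrinkle (\cref{rmk:torsionHarder}, Steps~5A and~6A of \cref{prop:MAIN-torsion}): the doubling must be done \emph{before} the non-deterministic blow-up to $(V,Y,\lambda'_1,\mu'_1)$-\qgeods, so that the two halves stay synchronised through the subsequent ET0L operations. None of this is present in your outline.

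A secondary point: the paper does not go all the way down to twisted equations over a free group and then invoke \cite{DEijac}. It works directly in the virtually free group $V$ arising from the $1$-skeleton $\mathcal K$ of the barycentric subdivision of the Rips complex (\cref{prop:K,prop:V}), lifts the triangulated system to finitely many systems $\Phi_{\mathbf c}$ over $V$ of size $\Oh(n^2)$, and runs the algorithm of \cite{DEijac} on each of these. The $n^4\log n$ bound then comes from $\NSPACE((n^2)^2\log(n^2))$. Your accounting of the size blow-up is right, but the intermediate object is $V$ with its standard normal forms (\cref{std-DE-quasigeod}), and Lemmas~\ref{kappaV} and~\ref{VtoG} are needed to translate between $\mathcal K$-\qgeods, $(V,Y)$-\qgeods, and $(G,S)$-\qgeods.
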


\Cref{thm:IntroTorsionFree,thm:IntroTorsion} follow from the more general \cref{thmTorsionFree,thmTorsion} stated later in the paper, and
have the following immediate consequence.
\begin{corollary}[Existential theory]\label{cor:existential}
The existential theory with constant size effective  \qier\ constraints can be decided in $\NSPACE(n^2\log n)$ for torsion-free hyperbolic groups and $\NSPACE(n^4\log n)$ for hyperbolic groups with torsion.  
\end{corollary}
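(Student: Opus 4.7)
The plan is to reduce the decision problem for the existential theory to the satisfiability of a single system of equations and inequations with \qier\ constraints, and then invoke \cref{thm:IntroTorsionFree,thm:IntroTorsion}. An existential sentence has the shape $\exists x_1,\ldots,x_k\,\Phi(\vec x)$, where $\Phi$ is a quantifier-free Boolean combination of atomic formulas (equations and constant-size \qier\ constraints). Writing $e_1,\ldots,e_m$ for the atomic subformulas of $\Phi$, so that $m\leq n$, the sentence is true in $G$ if and only if there is a truth assignment $\tau\colon\{e_1,\ldots,e_m\}\to\{0,1\}$ such that $\Phi$ evaluates to true under $\tau$ and the system $\Psi_\tau$, consisting of those $e_i$ with $\tau(e_i)=1$ together with the negations $\lnot e_j$ for $\tau(e_j)=0$, is simultaneously satisfiable in $G$.

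First I would nondeterministically guess the truth assignment $\tau$, which requires only $O(n)$ bits of workspace. Next I would verify that $\Phi$ is satisfied by $\tau$, a purely propositional evaluation that can be carried out in logarithmic space in $n$. Finally I would form the system $\Psi_\tau$, which by construction has size $O(n)$ and inherits the constant-size \qier\ constraints from $\Phi$, and decide its satisfiability using \cref{thm:IntroTorsionFree} in the torsion-free case or \cref{thm:IntroTorsion} in the case with torsion; the emptiness test is exactly one of the conclusions that these theorems provide within the claimed space bounds.

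The only real care point is to avoid the exponential blow-up that would arise from naïvely converting $\Phi$ to disjunctive normal form; nondeterministically guessing a single witnessing truth assignment, rather than materialising all clauses, sidesteps this. Since the guessing and propositional verification contribute only linear space, and the final call to \cref{thm:IntroTorsionFree} or \cref{thm:IntroTorsion} dominates the workspace requirement, the overall complexity comes out to $\NSPACE(n^2\log n)$ in the torsion-free case and $\NSPACE(n^4\log n)$ in the case with torsion, as claimed. I do not expect any genuine obstacle beyond this bookkeeping, since all of the geometric and language-theoretic content has already been absorbed into \cref{thm:IntroTorsionFree,thm:IntroTorsion}.
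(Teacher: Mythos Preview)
Your proposal is correct and is the standard reduction; the paper itself offers no proof beyond declaring the corollary an ``immediate consequence'' of \cref{thm:IntroTorsionFree,thm:IntroTorsion}, so your argument is strictly more detailed than what appears there.

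One small point worth tightening: you list qier constraints among the atomic formulas subject to Boolean combination, so your system $\Psi_\tau$ may contain a negated constraint $X\notin R$, i.e.\ $X\in G\setminus R$. For this to feed back into \cref{thm:IntroTorsionFree,thm:IntroTorsion} you need $G\setminus R$ to again be effective qier of constant size. This holds in hyperbolic groups: by \cref{prop:DG9.4complexity} one computes in constant space the regular language $\pi^{-1}(R)$ intersected with the shortlex normal forms, complements inside that regular language, and obtains a regular language of geodesics (hence $(1,0)$-qier) mapping onto $G\setminus R$. Alternatively, and closer to the paper's \cref{defn:system}, you could take the reading in which constraints are fixed per variable and only equations appear under Boolean connectives; then the issue does not arise at all. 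Either way, the argument goes through within the stated space bounds.
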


In this paper we produce the solution sets not only in terms of shortlex representatives, but more generally in terms of \qgeods:
 for given constants, we can obtain the full set of solutions expressed as words belonging to some regular subset of quasigeodesics surjecting to the group, such as the set of all geodesics.
\Cref{table:results} 
summarises the different kinds of expressing solution sets, together with the corresponding language and space complexities that we are able to prove.

In the special case of free groups, we may want to produce \emph{all} words which represent solutions, and then such a set has a slightly higher complexity: ET0L instead of EDT0L (see  \secref{sec:EDT0L} for definitions).

\begin{restatable}[Full solutions in free groups]{corollary}{allFree}
\label{cor:allwords}
Let $G$ be a finitely generated free group with free basis  $A_+$,  and $A=A_+\cup \{x^{-1}\mid x\in A_+\}$ the {\em free basis generating set} for $G$.
Let $\Phi$ be a system of equations and inequations
of size $n$ 
with constant size 
rational constraints. 

Then the set of all solutions, as tuples of all words over $S$, 
is ET0L, and the algorithm which on input $\Phi$ prints a description for the ET0L grammar runs in  $\NSPACE(n\log n)$.
\end{restatable}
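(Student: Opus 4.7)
The approach is to specialise \cref{thm:IntroTorsionFree} to the case $G = F(A_+)$, invoking the sharper free-group bound $\NSPACE(n\log n)$ from \cite{CDE,DEijac}. This directly yields an EDT0L language $L_{\mathrm{red}}$ of tuples of freely reduced (equivalently, shortlex geodesic) words representing the solutions to $\Phi$, together with a printing algorithm in $\NSPACE(n\log n)$. It remains to enlarge $L_{\mathrm{red}}$ to the full preimage $L=\tau^{-1}(L_{\mathrm{red}})$ under free reduction $\tau:A^*\to A^*$, applied component-wise across the tuple encoding; this is exactly the set of all words over $A$ representing solutions.

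The combinatorial engine is the decomposition
\[
\tau^{-1}(a_1\cdots a_n) \;=\; D\, a_1\, D\, a_2\cdots D\, a_n\, D,
\]
valid for every freely reduced word $a_1\cdots a_n \in A^*$, where $D = \tau^{-1}(\varepsilon)$ is the two-sided Dyck language of freely trivial words over $A$. The inclusion $\supseteq$ is immediate from confluence of free reduction. The reverse follows by fixing, inside any preimage of $a_1\cdots a_n$, a choice of $n$ positions that survive a chosen reduction sequence as the letters $a_i$, and observing that the intervening segments cannot cancel across these positions (since the $a_i$ are never cancelled), so each must reduce to $\varepsilon$ individually.

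I would next invoke closure of the ET0L family under concatenation and under substitution by ET0L languages (Rozenberg--Salomaa). Since $D$ is context-free, hence ET0L, and admits a grammar of constant size depending only on $A$ (independent of $\Phi$), the substitution $\sigma:a\mapsto aD$ on terminals in $A$, followed by prepending an extra copy of $D$, turns the ET0L language $L_{\mathrm{red}}$ into an ET0L grammar for $L$. Mechanically, one adjoins to the EDT0L grammar for $L_{\mathrm{red}}$ a constant-size gadget implementing $D$ and rewrites every terminal production that outputs a letter $a \in A$ to instead output $a$ followed by the start symbol of the gadget, leaving tuple separator symbols untouched. Because the adjoined gadget is independent of $\Phi$, a log-space transducer can splice it into the printed grammar while streaming the original EDT0L output, preserving the $\NSPACE(n\log n)$ complexity.

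The main point, rather than a technical obstacle, is conceptual: in a non-abelian free group $D$ is well known to be context-free but not EDT0L, so the nondeterminism of ET0L is genuinely required here and the resulting grammar cannot be collapsed to the deterministic variant by this construction. This is what forces the conclusion to be ET0L instead of EDT0L in the ``all words'' setting.
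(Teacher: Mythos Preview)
Your approach is correct and is essentially the paper's: start from the EDT0L grammar of reduced-word solutions from \cite{CDE} and then ``unreduce'' by inserting Dyck words between adjacent letters. The paper carries this out explicitly rather than via closure under ET0L substitution: it interleaves a fresh placeholder $\diamond$ throughout the grammar (in the seed and on the right-hand side of every rule), appends a loop at a new state labelled by the single nondeterministic table $\{(\diamond,\diamond),(\diamond,\diamond a\diamond a^{-1}\diamond)\mid a\in A\}$, and finally applies the homomorphism $\diamond\mapsto\varepsilon$. One small fix to your version: with $a\mapsto aD$ and $\#$ left untouched, prepending a single global $D$ loses the leading Dyck factor on every component after the first (and on empty components), so you also need $\#\mapsto\#D$; the paper's uniform $\diamond$-interleaving handles this automatically.
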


Note that the {\em word problem} for a group $G$ is the set of solutions to the one variable equation $X=1$, so \cref{cor:allwords} cannot be improved to EDT0L since it is known that the word problem for $F_2$ is not EDT0L (see {\cite[Proposition 26]{CEF}; \cite{MR0483746}). Also, it is suspected that word problems of hyperbolic groups are not ET0L unless the group is virtually free (in which case the word problem is deterministic context-free \cite{ms83}), 
so the requirement that words are \qgeods\ in all our results apart from \cref{cor:allwords}
most likely cannot be weakened in general.

\bigskip

\begin{table}[ht]
\begin{center}
\resizebox{\columnwidth}{!}{%
\rowcolors{1}{}{lightgray}
\begin{tabular}{r|r|r|r|r|l}
Class of groups &gen set &solutions as & language  & $\NSPACE$ &\\
  \hline
  Free & free basis & freely red words &  EDT0L & $n\log n$ &\cite{CDE}\\
Free &any & unique quasigeods  & EDT0L & $n\log n$ &   Cor.~\ref{cor:changeGset}\\
Free &any &  quasigeods  & ET0L & $n\log n$& Cor.~\ref{cor:changeGset}\\
Free &free basis & all words   & ET0L & $n\log n$& Cor.~\ref{cor:allwords}\\
Virt free & certain & certain quasigeods  & EDT0L & $n^2\log n$ & \cite{DEijac}\\
Virt free &any & unique quasigeods  & EDT0L & $n^2\log n$& Cor.~\ref{cor:changeGset}\\
Virt free &any & quasigeods & ET0L & $n^2\log n$& Cor.~\ref{cor:changeGset}\\
Torsion-free hyp &any & unique quasigeods & EDT0L & $n^2\log n$ &  Thm.~\ref{thmTorsionFree}\\
Torsion-free hyp &any & quasigeods  & ET0L & $n^2\log n$ &  Thm.~\ref{thmTorsionFree}\\
 Hyp with torsion&any & unique quasigeods  & EDT0L & $n^4\log n$ &   Thm.~\ref{thmTorsion}\\
 Hyp with torsion &any & quasigeods  & ET0L & $n^4\log n$&   Thm.~\ref{thmTorsion}\\
\end{tabular}%
}
\end{center}
\caption{Summary of results}
\label{table:results}
\end{table}

The above results partially extend to systems of equations and inequations subject to arbitrary \Qier\ constraints. 
By `partially extend' we mean that the language-theoretic complexity is preserved, but we can no longer guarantee that the description  
can be computed in (non-deterministic) polynomial space.
The issue here is that a Benois-type transition required to catch all solutions obeying the constraint does not preserve space complexity -- the automata construction blows up (see \cref{sec:DG9.4issue} for further discussion).
  Conceivably there may be another approach to get a polynomial bound on space complexity. For now we are able to state the following:

\begin{theoremx}[Systems with \Qier\ constraints]\label{thmCWithConstraints}
Let $G$ be a  hyperbolic group with or without torsion, with finite symmetric generating set $S$. 
Let $\Phi$ be a system of equations and inequations with arbitrary size effective  \qier\ constraints  (see \cref{sec:notationSolns,defn:explicit} for  precise definitions). 
Then the set of all solutions, as tuples of shortlex geodesic words over $S$, 
 is EDT0L. 
\end{theoremx}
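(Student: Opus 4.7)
The plan is to reuse the entire pipeline developed for the more detailed \cref{thmTorsionFree,thmTorsion}, observing that each individual step produces an EDT0L specification of the solution set regardless of whether the input constraint automata are bounded in size. The single place where the argument breaks down is the space bookkeeping, so the conclusion about the language class survives while the \PSPACE\ bound does not.

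First I would apply the Rips--Sela--\DG\ reduction to $\Phi$: the system of equations and inequations with effective \qier\ constraints in $G$ is replaced by a finite collection of systems of (twisted) equations over a virtually free group (or, in the torsion-free case, a free group), while each \qier\ constraint in $G$ is translated into a rational constraint on the free-group side. The translation uses a Benois-type construction: given an automaton $\mathcal{A}$ recognising an effective \qier\ constraint language, one passes to an automaton $\mathcal{A}'$ recognising every word over the relevant free generating set that reduces to a word accepted by $\mathcal{A}$. This construction always exists and is effective, but in general $\mathcal{A}'$ can be exponentially larger than $\mathcal{A}$, which is precisely the source of the space blow-up discussed in \cref{sec:DG9.4issue}.

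Next I would feed the resulting (twisted) free-group system, equipped with the rational constraints coming from the Benois step, into the EDT0L construction of \cite{CDE,DEijac}. That construction is purely language-theoretic: it extracts an EDT0L grammar over the terminal alphabet $S$ from the trace of the recompression algorithm, and its correctness does not depend on whether the constraint automata are polynomial in the input or not. Finally, one composes with the rational transduction that rewrites virtually-free-group solutions as shortlex geodesic tuples in $G$, using that EDT0L is closed under rational transductions.

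The main obstacle, and really the only new verification, is to confirm that the Benois-style transformation of the constraint automata still interacts cleanly with the EDT0L construction when its size is not controlled. Concretely, I would check that the coupling between the recompression procedure and the constraint automata in \cite{CDE,DEijac} uses only the finiteness of these automata, not a polynomial bound on their size; once this is in hand, the only quantities that grow beyond polynomial size are the intermediate automata, and the output grammar still defines an EDT0L language with terminal alphabet $S$, as required.
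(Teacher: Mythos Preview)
Your overall strategy—rerun the pipeline of \cref{thmTorsionFree,thmTorsion} and simply drop the space bookkeeping—is exactly how the paper proceeds (via \cref{thmQIER}). However, the final step of your argument contains a genuine gap.

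You write that one ``composes with the rational transduction that rewrites virtually-free-group solutions as shortlex geodesic tuples in $G$, using that EDT0L is closed under rational transductions.'' But EDT0L is \emph{not} closed under rational transductions: it is closed under homomorphism, finite union, and intersection with regular sets, but not under inverse homomorphism (see the discussion opening \cref{sec:Copy} and \cref{prop:closureET0L}). The passage from a covering set of quasigeodesic solutions to the set of shortlex representatives genuinely requires an inverse-homomorphism step, and applying this naively yields only ET0L, not EDT0L.

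The paper circumvents this via the doubling and Copying Lemma machinery of \cref{sec:Copy}. One first \emph{doubles} the EDT0L covering solution set (\cref{lem:doubleME}), obtaining words of the form $w_1\#\cdots\#w_rf(w_1\#\cdots\#w_r)$. One then performs the rational-transduction-style rewriting of \cref{prop:shortlex-hyp}, which \emph{does} drop to ET0L; but because shortlex representatives are unique, the output has the special form $u_1\#\cdots\#u_rf(u_1\#\cdots\#u_r)$, and the Copying Lemma (\cref{lem:copyER,lem:copyME}) upgrades it back to EDT0L. None of this depends on the constraint automata being of bounded size, so the conclusion survives for arbitrary effective \qier\ constraints—but the Copying step is essential and your proposal omits it.

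A secondary remark: the paper does not push the \qier\ constraints to the (virtually) free group side as you describe. Instead it solves the equations \emph{without} the \qier\ constraints (\cref{prop:MAIN-torsionfree,prop:MAIN-torsion}), obtains a covering solution set of quasigeodesics in $G$, and only then intersects with the regular languages $\widetilde{R_{X_i}}=\pi^{-1}(R_{X_i})\cap Q_{G,S,\lambda,\mu}$ produced by \cref{prop:DG9.4complexity}. Your variant of transporting the constraints into the free-group algorithm could perhaps be made to work, but the ``Benois-type'' construction you need is really \cref{prop:DG9.4complexity} (saturating to all $(G,S,\lambda,\mu)$-quasigeodesics representing elements of $R$), not free reduction in $F(S)$.
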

\Cref{thmCWithConstraints} follows from the more specific statement in \cref{thmQIER} below.

EDT0L is a surprisingly low language complexity for solution sets in hyperbolic groups. EDT0L languages lie strictly in the class of indexed languages, and while containing all regular languages, they are incomparable to the context-free ones. See \cref{fig:EDTdiagram}.
Solution sets to systems of equations are content-sensitive, since there exists a Turing machine that can simply take an input tuple of words and substitute them into the equation to check whether they form a solution, needing just linear space. However, solution sets are not context-free in general, for example,  the equation $X=Y$ has solutions of the form $(w,w)$ which, if expressed as a language  $w\#w$ is a standard non-context-free example. 
As mentioned above, whether or not solution sets for free groups were even indexed languages was a long-standing open problem \cite{FerteMarinSenizerguesTocs14,GilPC,Jain} 
which was resolved by \cite{CDE}, and the present paper radically extends this to the much larger class of hyperbolic groups.

ET0L and EDT0L languages are playing an increasingly  useful role in group theory, not only in describing solution sets to equations in  groups \cite{CDE,DEijac,DiekJezK}, but more generally \cite{BEboundedLATA,BCEZ,CEF}.

\begin{figure}[ht]
    \centering
  \begin{tikzpicture}
  [node distance=2.5cm]
 \node (reg) {regular};
  \node (edt) [ right of =reg] {EDT0L};
  \node (cf) [ below of = edt, yshift=1cm] {context-free}; 
    \node (et) [ right of  = edt] {ET0L};
        \node (ind) [ right of = et] {indexed};
  \node (cs) [ right of = ind, xshift=5mm] {context-sensitive}; 
      
    \draw (reg) -- (edt);
        \draw (edt) -- (et);
           \draw (et) -- (ind);
    \draw (reg) -- (cf);
            \draw (cf) -- (et);
             \draw  (ind) -- (cs);
 \end{tikzpicture}

    \caption{Relationships between formal language classes (containment 
     from left to right).}\label{fig:EDTdiagram}
\end{figure}
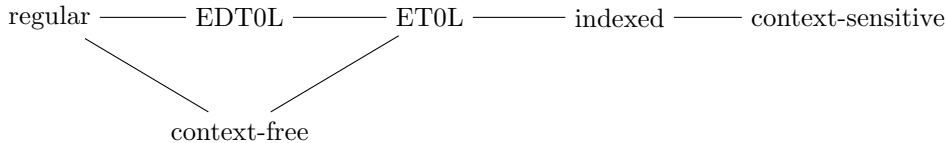

The paper is organised  as follows.
 In \Cref{sec:notationSolns,sec:EDT0L,sec:hyp-intro} we give the necessary background on equations, constraints, EDT0L languages, space complexity and hyperbolic groups. \Cref{sec:Copy} describes our key trick to produce solutions as EDT0L  rather than simply ET0L  languages.
 \Cref{sec:DG9.4issue} explains how we handle \qier\ constraints and complexity issues for them.
In \Cref{sec:torsionfree} we deal with the torsion-free case. We follow Rips and Sela's approach to solving systems of equations over torsion-free hyperbolic groups by reducing them to systems over free groups via {\em canonical representatives}, and we apply \cite{CDE} to show the language of all solutions is EDT0L. In \Cref{sec:torsion}  
we use \DG's reduction of systems over torsion hyperbolic groups to systems over virtually-free groups, via canonical representatives in an appropriate (subdivision of a) Rips complex, and we apply \cite{DEijac} together with intricate language operations, to show the language of all solutions is EDT0L. The torsion case is more involved because it requires keeping track of generating sets, \qgeod\ words (and paths in different graphs) and translations between these, to obtain the formal language description, and to show the $\NSPACE(n^4\log n)$ complexity. (See \cref{rmk:torsionHarder} for a brief explanation.)
\Cref{sec:constraints} extends the results  from the main part of the paper to systems with arbitrary \qier\ constraints. We explain why our techniques still ensure  EDT0L solutions but {\em a priori} not in \PSPACE. In \Cref{sec:further} we prove \cref{cor:allwords}.

An extended abstract of a {preliminary version of} this paper was presented at the conference ICALP 2019,  Patras (Greece),  8-12 July 2019
\cite{CiobanuEicalp2019}.

\section{Preliminaries -- Equations,  solution sets, rational constraints}
\label{sec:notationSolns}

A language $L\subseteq \Sigma^*$ is {\em regular} if there exists some (nondeterministic) finite state automaton over $\Sigma$ which accepts exactly the words in $L$. We abbreviate {\em nondeterministic finite automaton} to NFA.

Let $G$ be a fixed  group with finite symmetric generating set $S$, and $\pi:S^*\to G$ the natural projection map. 
A subset $R\subseteq G$ is  {\em rational} if $R=\pi(L)$ for some regular language $L\subseteq S^*$. 

 For a word $w \in S^*$ let $|w|_S$ denote the length of $w$, and for $g\in G$ let $\Abs{g}_S=
\min\{\abs{w}_S\mid w\in S^*,\pi(w)=g\}$ (the geodesic length of $g$ with respect to $S$).

\begin{definition}[System of equations]\label{defn:system}

Let $\mathcal X= \{X_1, \dots, X_{r}\}$  be a set of variables, $\mathcal{A}=\{a_1, \dots, a_k\} \subseteq G$ a set of constants, and $\phi_j(\mathcal X, \mathcal{A})\in (\mathcal X^{\pm 1}, \mathcal{A}^{\pm 1})^*$ a set of words, where  $1\leq j \leq s$, $k, r, s\geq 1$.
For each $X\in \mathcal X$, let $R_X$ be a rational subset of $G$.
Then
$$\Phi=\{\phi_j(\mathcal X, \mathcal{A})=1\}_{j=1}^h \cup \{\phi_j(\mathcal X, \mathcal{A})\neq1\}_{j=h+1}^s\cup\{R_X\mid X\in\mathcal X\}$$
is a \emph{system of equations and inequations  with rational constraints} over $G$.

 If each  word $\phi_j(\mathcal X, \mathcal{A})$ has length $l_j$ for all $j$, and each
  $R_X$ is given by an NFA with $|R_X|$ states, 
 the \emph{size} of the system $\Phi$, denoted $\Abs{\Phi}$, is defined using two parameters: 
 \[|\Phi|_1:=
 \sum_{j=1}^s l_j, \
 |\Phi|_2=\sum_{X\in \mathcal X}|R_X|, \text{ and } \Abs{\Phi}=\left(|\Phi|_1,|\Phi|_2\right).\]

A system is said to have {\em constant size rational constraints} if $|\Phi|_2$ is bounded by a constant (that might depend the fixed group and generating set). In this case we abuse notation and let the size be $\Abs{\Phi}=|\Phi|_1=n$.
  \end{definition}

\begin{definition}[Different types of solution sets]\label{solutiondef}\leavevmode

\begin{itemize}
\item[(i)] A tuple $(g_1,\dots, g_r)\in G^r$ \emph{solves} (or {\em is a solution of}) the system $\Phi$ if there exists a homomorphism $\sigma:F(\mathcal{X})\ast G \rightarrow G$ given by $\sigma(X_i)=g_i$ which fixes $G$,  satisfies $\sigma(X_i)\in R_{X_i}$ for each $1\leq i\leq r$ and 
$$\sigma(\phi_j(X_1, \dots, X_r, a_1, \dots, a_k))=1 $$ $$\text{and}\ \sigma(\phi_l(X_1, \dots, X_r, a_1, \dots, a_k))\neq1$$ 
for all $1\leq j\leq h$ and $h+1 \leq l \leq s$.
\item[(ii)] The {\em group element solutions} to $\Phi$ is the set $$\text{Sol}_G(\Phi)=\{(g_1,\dots, g_r) \in G^r \mid (g_1,\dots, g_r) \text{ solves } \Phi\}.$$

\item[(iii)] Let $\langT\subseteq S^*$ and $\#$ a symbol not in $S$. 
 The  {\em full set of $\langT$-solutions} is the set 
 $$\text{Sol}_{\langT,G}(\Phi)=\{w_1\#\dots\# w_r  \mid w_i \in \langT,  (\pi(w_1),\dots, \pi(w_r) ) \text{ solves  } \Phi\}.$$ 
\item[(iv)] Let $S_{\#r}=\{w_1\#\dots \#w_r\mid w_i\in S^*\}$ denote the set of all words over $S\cup\{\#\}$ which contain exactly $r-1$ $\#$ symbols.
A set
$$L=\{w_1\#\dots\# w_r\}\subseteq S_{\#r}$$
is  a {\em covering solution set} to $\Phi$ if  $ w_i \in S^*, 1\leq i \leq r$, and \[\{(\pi(w_1),\dots, \pi(w_r))\mid (w_1\#\dots\# w_r) \in L \}=\text{Sol}_G(\Phi).\] 
\end{itemize}
\end{definition}

\begin{example}
Let $G=\langle a, b, c, d \mid aba^{-1}b^{-1}cdc^{-1}d^{-1}=1\rangle$ be the surface group of genus $2$ with symmetric generating $S=\{a^{\pm 1},b^{\pm 1},c^{\pm 1},d^{\pm 1}\}$. Let $\mathcal X= \{X,Y\}$ be a set of variables, $\mathcal A=\{a, b, c, d\}$ a set of constants, and consider the equation 
$\phi(\{X,Y\}, \{a, b, c, d\})=abXcY$. 
Then the  system consisting of a single equation
\[\Phi=\left\{ abXcY=1\right\}\] has size $5$.
The pair $(a^{-1}b^{-1}, dc^{-1}d^{-1})$ is one group element solution of $\Phi$.

Now let $R_X=R_Y=R=\{g \in G \mid \Abs{g}_S \textrm{ is even}\}$. One can prove\footnote{By \Cref{prop:reg-hyp} below,  the language $L_1$ of all geodesics  is regular, which we can intersect with the regular language $L_2$ of all words of even length. Then $R=\pi(L_1\cap L_2)$.} 
that $R$ is a rational set in $G$. 
If \[\Phi'=\{abXcY=1\}\cup\{R_X, R_Y\},\] 
 then $\Phi'$  has no solutions: if $\sigma(X)$ and $\sigma(Y)$ are solutions of $\Phi'$ of even length then $\sigma(\phi(X,Y, \{a, b, c, d\}))=\sigma(a b X c Y)=ab\sigma(X)c\sigma(Y)$ has odd length and cannot represent the identity in $G$ because $G$ has only even length relators and only even length words can represent the identity in $G$.
\end{example}

\begin{example}
Let $G$ be a hyperbolic group with finite symmetric generating set $S$. 
If $\Phi$ consists of the single equation $X=1$ then $\text{Sol}_G(\Phi)=\{1\}$ and $\text{Sol}_{S^*,G}(\Phi)$ is the word problem of $G$ 
(believed to not be ET0L if $G$ is not virtually free).
\end{example}

\section{Preliminaries -- languages and space complexity} \label{sec:EDT0L}

An alphabet is a finite set. We use the notation $\mathscr P(A)$ for the power set (set of all subsets) of a set $A$, and $|A|$ for the size of  (number of elements in)  the set $A$.

\subsection{ET0L and EDT0L languages}

Let $C$ be an alphabet. 
A \emph{table} for $C$ is a finite subset of $C\times C^*$ which includes at least one element $(c,v)$ for each $c\in C$.
A table $t$ is {\em deterministic} if for each $c\in C$ there is exactly one $v\in C^*$ with  $(c,v)\in t$.
If $(c,v)$ is in some table $t$, we say that $(c,v)$ is a \emph{rule} for $c$. Applying a rule $(c,v)$ to  a letter $c$ means replacing $c$ by $v$.

If $t$ is a table and $u\in C^*$ then  we write 
$u\longrightarrow^t v$ to mean that $v$ is obtained by applying rules from $t$ to each letter of $u$. That is, $u=a_1\dots a_n$,  $a_i\in C$,  $v=v_1\dots v_n$, $v_i\in C^*$, and $(a_i,v_i)\in t$ for $1\leq i\leq n$. Note that when $t$ is deterministic then the word  $v$ obtained from $u$ by applying $t$ is unique. In this case we can write $v=t(u)$ instead of $u\longrightarrow^t v$.

If $H$ is a set of tables  and $r\in H^*$ then we write $u\longrightarrow^{r} v$ to mean that there is a sequence of words $u=v_0,v_1,\dots, v_n=v\in C^*$ 
such that $v_{i-1}\longrightarrow^{t_i} v_i$ for $1\leq i\leq n$ where $r=t_1\dots t_n$. If $A\subseteq H^*$ we write $u\longrightarrow^{\control} v$ if $u\longrightarrow^{r} v$ for some $r\in \control$.

\begin{definition}[\cite{Asveld}]\label{def:et0lasfeld}
	Let $\Sigma$ be an alphabet. We say that $L\subseteq \Sigma^*$ is an {\em ET0L} language if there is an alphabet $C$ with $\Sigma\subseteq C$, a finite set $H\subset \mathscr P(C\times C^*)$ of tables, 
	a regular language $\control \subseteq H^*$ and a fixed word $c_0\in C^*$ such that
\begin{displaymath}
	L = \{ w \in \Sigma^* \mid c_0 \longrightarrow^{\control} w\}. 
\end{displaymath}
In the case when every table $t \in H$ is deterministic, i.e. each $ h \in \control$ is in fact a homomorphism, we write $	L = \{ h(c_0) \in \Sigma^* \mid h\in \control \}$
 and say that $L$ is {\em EDT0L}.
The set ${\control}$ is called the {\em rational control}, the word $c_0$ the \emph{seed} and $C$ the {\em extended alphabet}\footnote{
The letters E,D,T,L stand for {\em extended, deterministic, table, Lindenmayer} respectively, and $0$ is  the number zero standing  for {\em $0$-interaction}.}.
\end{definition}

\begin{example}\label{eg:AlexL} \footnote{We thank Alex Levine for providing this example.}  
The language $L=\{a^{n^2} \mid n\in \mathbb{N}\}$ over the alphabet $\Sigma=\{a\}$ is EDT0L but not context-free. The extended alphabet is
     $ C = \{ s,  t,  u,  a\}$, seed word is $c_0=tsa$, 
     \[\begin{array}{lll}
\phi_1=\{( s, su)\}\\
\phi_2 =\{( t , at),(u,ua^2)\}\\
 \phi_3 =\{( s, \varepsilon), (t,\varepsilon) , (u,\varepsilon)\}\end{array}\] where we use here  the convention that $\phi_i$ fixes the elements in $C$ not explicitly specified, 
 $H=\{\phi_1, \phi_2, \phi_3: C \to C^*\}$, and $M$ the automaton in \cref{fig:AlexL}.
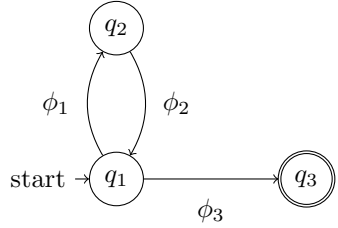
\begin{figure}[ht]
    \centering
   \begin{tikzpicture}  [node distance=2.5cm, scale=.4, every node/.style={circle}]

        \node[draw, initial] (q1)  {\(q_1\)};
        \node[draw, above of=q1,  yshift=-5mm] (q2) {\(q_2\)};
        \node[draw, right of=q1,accepting] (q3)  {\(q_3\)};

\draw[left,->,bend left] (q1) edge node {$\phi_1$} (q2);
\draw[right,->,bend left] (q2) edge node {$\phi_2$} (q1);
\draw[below,->] (q1) edge node {$\phi_3$} (q3);

      \end{tikzpicture}

    \caption{NFA for the rational control for the EDT0L grammar in \cref{eg:AlexL}.}\label{fig:AlexL}
\end{figure}
We have  $A=L(M) =(\phi_1 \phi_2)^\ast \phi_3$. 
One can check that $(\phi_1\phi_2)^i(tsa)=a^itsua^2ua^4u\dots ua^{2i}a$ which is sent to $a^{(i+1)^2}$ by applying $\phi_3$. It follows that 
the language of the EDT0L system is $\{a^{n^2} \ | \ n \in \mathbb{N}_+\}$.
\end{example}

\subsection{Space complexity for E(D)T0L}\label{sec:pspace}

Let  $s\colon \N\to\N$ be a function.
Recall an algorithm is said to run in \NSPACE$(s(n))$ if it can be performed by a non-deterministic Turing machine with a read-only input tape, a write-only output tape, and a read-write work tape, with the work tape restricted to using $\Oh(s(n))$ squares on input of size $n$.  We use the notation $L(\mathcal M)$ to denote the language  accepted by the automaton $\mathcal M$. The following definition formalises the idea of producing a description of some E(D)T0L language (such as the solution set of some system of equations) in \NSPACE$(s(n))$, where the language is the output of a computation with input (such as a system of equations) of size $n$.  We say an algorithm runs in \PSPACE\ if it runs in  \NSPACE$(s(n))$ for some polynomial function $s$.

\begin{remark}\label{hello}
Every $\NSPACE(s(n))$ algorithm (with $s(n) \in \OO(\log n)$)  can be simulated by a deterministic algorithm using at most working space $s(n)^2$ (Savitch's Theorem), and also by a deterministic Turing machine which uses a time bound in $2^{\Oh(s(n))}$, 
see \cite{pap94} for more details. Thus, every  $\PSPACE$ algorithm can be implemented such that it runs in deterministic singly exponential time  $2^{\text{poly}(n)}$.
\end{remark}

\begin{definition} Let 
 $\Sigma$ be a  (fixed) alphabet and  $s\colon \N\to\N$ a function.
  If there is an $\NSPACE(s(n))$ algorithm that on input $\Omega$ of size $n$ outputs (prints out)  a specification of an ET0L language $L_{\Omega}\subseteq \Sigma^*$, then we say that $L_{\Omega}$ is {\em ET0L in $\NSPACE(s(n))$}. 
 
 Here a specification of $L_{\Omega}$ consists of
  \begin{enumerate}\item
 an extended alphabet $C\supseteq \Sigma$, 
 \item a seed word $c_0\in C^*$, 
 \item a finite list 
of nodes of an NFA $\mathcal M$, labeled by some data, some possibly marked as initial and/or final, 
\item  a finite list $\{(u,v,h)\}$  of edges of 
$\mathcal M$ where $u,v$ are nodes and $h\in \mathscr P(C\times C^*)$ is a 
 table
\end{enumerate}
such that $L_{\Omega}=\{w\in \Sigma^*\mid c_0\to^{L(\mathcal M)} w\}$.

A language $L_{\Omega}$ is  {\em EDT0L 
 in
 $\NSPACE(s(n))$} if, in addition, every table $h$  labelling  an edge of $\mathcal M$ is deterministic.
 \end{definition}

 Note that the entire  print-out is not required to be in $O(s(n))$ space. 
 If  $|C|\in\Oh\left(\frac{s(n)}{\log n}\right)$ then we can  write out and store the entire extended alphabet as binary strings within our space bound, but in general this is just a convenience and not essential.
 
Previous results of the authors with Diekert can now be restated as follows.  
\begin{theorem}[{\cite[Theorem 2.1]{CDE}}] 
\label{thmCDE}
The set of all solutions  in a free group to a system of equations of size $n$, with  constant size  rational constraints,
as reduced words over the free generating set, is EDT0L in $\NSPACE(n\log n)$.
\end{theorem}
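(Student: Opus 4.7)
The plan is to reduce the problem to word equations with involution over a free monoid, then adapt a recompression-based satisfiability procedure so that it outputs a grammar describing all solutions rather than a mere yes/no answer. First I would translate $\Phi$ into an equivalent system over the free monoid $A^*$, imposing the regular constraint that every variable-value is freely reduced; since this is a constant-size condition, absorbing it into the given $R_X$ keeps the constraints of constant size. Each inequation $\phi_j\neq 1$ is replaced by an auxiliary variable $Y_j$ with equation $\phi_j = Y_j$ and the constant-size rational constraint $Y_j \neq \varepsilon$, reducing the whole problem to equations with rational constraints over $A^*$.

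Next I would invoke Je\.z's recompression algorithm, which iteratively replaces pairs $ab$ of distinct letters and maximal blocks $a^k$ by fresh compressed letters, guessing nondeterministically how occurrences inside variable images align with the compression. After $\Oh(\log n)$ rounds any branch either reaches a trivial equation or a contradiction; since each stored configuration has size $\Oh(n)$ and the constant-size constraint automata contribute only a constant overhead, the procedure runs in $\NSPACE(n\log n)$. To obtain the solutions rather than a decision, I would read the recompression \emph{backwards}: let $C$ be the extended alphabet of all fresh letters produced during any compression step, together with a separator $\#$; let the seed word be $X_1\#\cdots\#X_r$, viewed as letters of $C$; and for each compression step let the inverse map be the deterministic homomorphism replacing the fresh letter by the pair or block it abbreviates and fixing all other letters. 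The rational control is the NFA whose states are pairs (compressed equation, constraint-automaton state) and whose transitions are labelled by these inverse homomorphisms; accepting runs from the ``fully compressed'' initial state back to the original equation correspond exactly to valid recompression sequences, and composing the corresponding tables turns the seed into a solution tuple.

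The complexity bound is then achieved by streaming the NFA specification to the output tape one edge at a time, so that the whole automaton is never stored in working memory: each edge is a triple (state, state, table) whose description has size $\Oh(n)$, and the set of reachable configurations is enumerated by the existing recompression traversal in $\NSPACE(n\log n)$. The main obstacle, and the reason we can secure EDT0L rather than merely ET0L, is to guarantee that every table is deterministic. I would achieve this by using disjoint sets of fresh letters for distinct compression events, so that no fresh letter ever receives two different images under any table, together with ensuring that the separator $\#$ is fixed by every homomorphism. The most delicate point is handling block compressions that fall partly inside and partly outside a variable's image: the nondeterministic splitting must be pushed into the NFA's state (so that each path sees a single, well-defined inverse map) rather than into the tables, and I expect this bookkeeping to be where the bulk of the technical work lies.
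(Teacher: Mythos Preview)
This theorem is not proved in the present paper: it is quoted verbatim from \cite{CDE} (Theorem~2.1 there) and invoked as a black box, chiefly in the proof of Proposition~\ref{prop:MAIN-torsionfree}. There is therefore no proof in this paper against which to compare your proposal.

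Your sketch is nonetheless broadly faithful to the actual argument of \cite{CDE}: one reduces to word equations with involution and rational constraints over the free monoid, runs a recompression procedure, and reads the compression steps backwards as free-monoid endomorphisms labelling the edges of an NFA whose nodes are the compressed configurations; streaming this NFA edge-by-edge gives the grammar within the stated workspace. One point is inaccurate, however. The claim that ``after $\Oh(\log n)$ rounds any branch either reaches a trivial equation or a contradiction'' is not how the space bound is obtained. Recompression does not terminate in a number of phases bounded independently of the (possibly exponential-length) solution; the $\NSPACE(n\log n)$ bound comes instead from the fact that every reachable configuration (compressed equation together with constraint data) can be encoded in $\Oh(n\log n)$ bits, so the nondeterministic machine can guess and verify one transition at a time while never holding more than one configuration on its work tape. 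Your final paragraph, pushing the nondeterminism of block and pair splittings into the NFA state rather than into the tables, is exactly the mechanism by which \cite{CDE} secures determinism of the tables and hence EDT0L rather than merely ET0L.
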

\begin{theorem}[{\cite[Theorem 45]{DEijac}}] 
\label{thmDE}
The set of all solutions in a virtually free group to a system  of equations  of size $n$, with constant size rational constraints,
as words in {\em standard normal forms}\footnote{a particular \qgeod\ normal form, see \cref{std-DE-quasigeod}.} over a certain finite generating set,  is EDT0L in $\NSPACE(n^2\log n)$. 
\end{theorem}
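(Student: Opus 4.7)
The plan is to reduce equations in a virtually free group $G$ to (twisted) equations in a finitely generated free subgroup of finite index, and then apply \cref{thmCDE}. The main source of the quadratic blow-up in space, as opposed to the $\NSPACE(n \log n)$ bound for free groups, will be the need to track coset representatives as extra data, which multiplies the effective size of the system by a factor proportional to the input.

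First I would fix a finite-index normal free subgroup $F \leq G$ (which exists because every finitely generated virtually free group contains a finite-index free subgroup, which by passing to the normal core can be taken normal), together with a transversal $T$ for $G/F$. Using Bass--Serre theory, $G$ acts on a tree with finite vertex and edge stabilisers, and this action supplies a convenient ``standard normal form'' for group elements: each element is written as an alternating sequence of edge/vertex labels in a bounded graph of groups, giving a regular language of \qgeod\ representatives with respect to a suitable finite generating set. The first step of the reduction rewrites each variable $X_i$ as $X_i = t_i Y_i$ where $t_i \in T$ (a finite choice, to be guessed) and $Y_i$ is a new variable ranging in $F$; membership of $Y_i$ in $F$ is imposed by a rational constraint on $G$.

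Next, conjugation by elements of $T$ induces (a bounded set of) automorphisms of $F$; substituting $X_i = t_i Y_i$ into each equation and pushing all coset representatives to one side using these automorphisms converts the original system $\Phi$ over $G$ into a system of \emph{twisted} equations over $F$, of size $\Oh(n)$ times a constant that depends on $|T|$ and on $G$. A further standard trick absorbs the twisting into ordinary equations at the cost of introducing $\Oh(n)$ auxiliary variables and constants per original letter. This, together with the rational constraints coming from tracking coset information, yields a free-group system $\Phi'$ of size $\Oh(n^2)$ with constant-size rational constraints, whose solutions in $F$ are in (polynomial-time) computable bijection with the tuples of standard normal forms solving $\Phi$ in $G$.

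Invoking \cref{thmCDE} on $\Phi'$ produces an EDT0L grammar for the solutions as reduced words over a free basis of $F$ in $\NSPACE\!\bigl(|\Phi'|\log |\Phi'|\bigr) = \NSPACE(n^2 \log n)$. Finally I would post-process the grammar: compose the EDT0L morphisms with a fixed (finite-state, hence constant-space) transducer that converts each $F$-component into the desired standard normal form of the corresponding $G$-element, concatenating it with the appropriate $t_i$; EDT0L is closed under such finite-state substitutions, and this composition can be done within the same space bound. The main obstacle, and the place requiring the most care, is this last step: one must verify that the rewriting from reduced words in $F$ to standard normal forms in $G$ respects the EDT0L structure (which is why working with a regular set of quasigeodesics, rather than arbitrary words, is essential) and that the extended alphabet and the control NFA can be described without exceeding $\Oh(n^2 \log n)$ working space on the Turing machine.
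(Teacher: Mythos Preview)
The paper does not prove this statement: \cref{thmDE} is simply quoted from \cite[Theorem~45]{DEijac} as background in \secref{sec:pspace}, with no accompanying argument. There is therefore no in-paper proof to compare your proposal against.

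As an independent sketch, your outline begins correctly --- guessing coset representatives $t_i$ for each variable and rewriting over a free normal subgroup $F$ is exactly the opening move of \cite{DEijac}, and the resulting system is indeed a system of \emph{twisted} equations over $F$. The gap is the sentence ``a further standard trick absorbs the twisting into ordinary equations''. No such trick is standard, and this is precisely the substantive content of \cite{DEijac}: the twisting by the finite group of automorphisms of $F$ induced by conjugation in $G$ is \emph{not} eliminated there, but rather the recompression machinery for free monoids with involution is extended to handle twisted word equations directly, and the $n^2$ factor in the space bound arises from the size of the monoid encoding of that twisted system (see \DEsnf). If you want to carry your route through, you would need a lemma stating that twisted equations over a free group, with twists drawn from a fixed finite subgroup of $\operatorname{Aut}(F)$, reduce to untwisted equations with only polynomial blow-up and constant-size rational constraints; I am not aware of such a lemma with the required complexity control, and producing one would itself be a result of comparable difficulty to the theorem you are trying to prove.
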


 The results in these papers are more general, in that they handle  rational constraints of certain non-constant sizes within the same space bound.
 We will return to the issue of  non-constant sized constraints in \cref{sec:constraints}.

\begin{remark}\label{rmk:space-for-table} In our  applications below we have $\Omega$ representing  some system of equations and inequations with (constant size) rational constraints, with $|\Omega|=n$, and we construct algorithms where the extended alphabet $C$ has size $|C|\in \Oh(n)$ in the torsion-free case and $|C|\in \Oh(n^2)$ in the torsion case. This means  we can write down the entire alphabet $C$ as binary strings within our space bounds.  Moreover, each element $(c,v)$ of any table we construct has $v$ of (fixed) bounded length, so we can write down entire tables within our space bounds.
 \end{remark}

\subsection{Closure properties}
It is well known 
 (see for example \cite[Theorem V.1.7]{MathTheoryLSystems})
  that the class of ET0L languages  is  closed under homomorphism, inverse homomorphism, finite union, and intersection with regular languages (a {\em full AFL}),  and  that EDT0L  is closed under all except  inverse homomorphism.
Here we show that the space complexity of an E(D)T0L language is preserved by these operations.  

Let us denote an E(D)T0L system by the $4$-tuple $(C, \Sigma, c_0,   \mathcal M)$ consisting of extended alphabet $C$,   alphabet  $\Sigma\subseteq C$, 
  seed word $c_0\in C^*$,  and  NFA $\mathcal M$ with edges labeled by tables in $\mathscr P(C\times C^*)$, so  that $L=\{w\in \Sigma^*\mid c_0\to^{L(\mathcal M)} w\}$ is the corresponding E(D)T0L language.

 \begin{proposition}\label{prop:closureET0L}
 Let $\Sigma, \Gamma$ be finite alphabets of fixed size, $\mathcal N$ an NFA of fixed size with $L(\mathcal N) \subseteq \Sigma^*$,  and  $\psi\colon \Sigma^*\to \Gamma^*$, $\varphi\colon \Gamma^*\to \Sigma^*$  homomorphisms. 
 Suppose there is a function $s\colon \N\to\N$ and  $\NSPACE(s(n))$ algorithms that on inputs $\Omega_1,\Omega_2$, each of size in $\Oh(n)$, prints out  specifications for  E(D)T0L languages
  $L_{\Omega_1}, L_{\Omega_2}\subseteq \Sigma^*$. (That is,  $L_{\Omega_1}, L_{\Omega_2}$ are E(D)T0L in \NSPACE$(s(n))$.) Then 
 \begin{enumerate}
     \item (union) $L_{\Omega_1}\cup L_{\Omega_2}$ 
 \item (homomorphism) $\psi(L_{\Omega_1})$ 
 \item (intersection with regular) $L_{\Omega_1}\cap L(\mathcal N)$
    \end{enumerate}
are  E(D)T0L in \NSPACE$(s(n))$ and 
 \begin{enumerate}
   \item[(4)] (inverse homomorphism) $\varphi^{-1}(L_{\Omega_1})$  \end{enumerate}
is  ET0L in \NSPACE$(s(n))$.

\end{proposition}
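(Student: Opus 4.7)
The plan is to give an explicit construction for each of the four closure operations and verify that each can be carried out within $\NSPACE(s(n))$. The key point is that $\mathcal{N}$, $\psi$, and $\varphi$ have fixed size, so every new symbol, table, state and edge added has a description whose size is dominated by that of the input and no step blows up the space budget. I will treat the cases roughly in order of difficulty.

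\emph{Union and homomorphism} are handled by standard modifications. For union, introduce a fresh seed $\hat c_0$ and a fresh start state $s_0$, take disjoint copies of the two extended alphabets, and add two outgoing edges from $s_0$ labelled by deterministic initialisation tables that route $\hat c_0$ to the original seed of either system (identity on the rest). The printer calls the input printers in sequence and adds constant-size boilerplate. For $\psi(L_{\Omega_1})$, enlarge the extended alphabet by $\Gamma$, declare $\Gamma$ the new terminal alphabet, and add a single new final state reachable from each original final state by an edge labelled by the deterministic realisation table $\{(a,\psi(a)):a\in\Sigma\}\cup\{(c,c):c\in C\setminus\Sigma\}\cup\{(b,b):b\in\Gamma\}$. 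Both constructions preserve determinism.

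For \emph{intersection with regular} $L(\mathcal{N})$, with $\mathcal{N}=(Q,\Sigma,\delta,q_0,F)$, use the standard product construction: annotate each letter of the extended alphabet by an ordered pair of states of $\mathcal{N}$, so that $(c,p,q)$ means ``$c$ will derive a word taking $\mathcal{N}$ from $p$ to $q$''. The seed $(c_0,q_0,q_f)$ is chosen via the rational control over $q_f\in F$, and a rule $(c, a_1\cdots a_n)\in t$ induces new rules
\[
\bigl((c,p,q),\, (a_1, p, p_1)(a_2, p_1, p_2) \cdots (a_n, p_{n-1}, q)\bigr)
\]
for all intermediate sequences $p_1,\ldots,p_{n-1}\in Q$; a final edge strips annotations from terminals, sending $(a,p,q)\mapsto a$ whenever $a\in\Sigma$ and $q\in\delta(p,a)$. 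For ET0L this is immediate. For \emph{EDT0L}, the construction as stated is non-deterministic owing to the free choice of intermediate states; the fix is to push that choice into the rational control, replacing each edge labelled $t$ by parallel edges $t_\sigma$, one per consistent global assignment $\sigma$ of intermediate-state sequences to all extended letters. Each $t_\sigma$ is deterministic; since $|Q|$ and every rule length are constants (see \cref{rmk:space-for-table}), each $t_\sigma$ has description of size $\Oh(|C|)$, fitting within the space bound, and although the number of such edges may be exponential, the printer streams them one at a time.

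For \emph{inverse homomorphism} $\varphi^{-1}(L_{\Omega_1})$, append a preimaging phase to the input ET0L system. Each $\varphi^{-1}(a)\subseteq\Gamma^*$ is regular; fix an NFA $\mathcal{A}_a=(P_a,\Gamma,\delta_a,p_a^0,F_a)$ for it and introduce fresh markers $[a,p]$ for $a\in\Sigma$, $p\in P_a$. After the original derivation terminates, apply a marking table $a\mapsto[a,p_a^0]$, then iterate (via a new self-loop in the rational control) a preimage table with rules $[a,p]\to b\,[a,p']$ for $p'\in\delta_a(p,b)$ and $[a,p]\to\varepsilon$ for $p\in F_a$, and finally exit to a new final state. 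All markers must vanish for the derived word to lie in $\Gamma^*$, encoding exactly that $\varphi(w)\in L_{\Omega_1}$. The preimage table is inherently non-deterministic (since $\varphi^{-1}(a)$ may be infinite, no single deterministic table suffices), explaining why the result is ET0L rather than EDT0L. The main obstacle throughout is the EDT0L case of intersection with a regular language: the natural product construction introduces choices into the rules, and we must move them into the rational control to keep each table deterministic, at the cost of a much larger but still stream-printable NFA.
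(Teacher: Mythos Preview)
Your treatments of union, homomorphism, and intersection with a regular language are correct and essentially match the paper's constructions; in fact your handling of the EDT0L case of intersection is more careful than the paper, which glosses over the exponential number of parallel deterministic edges by calling it ``a constant number''. Your observation that these edges can nonetheless be streamed one at a time within the space bound is exactly the point.

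However, your construction for inverse homomorphism has a genuine gap. You replace each terminal letter $a\in\Sigma$ by a marker generating $\varphi^{-1}(a)$, so that from a word $a_1\cdots a_n\in L_{\Omega_1}$ your system produces exactly the language $\varphi^{-1}(a_1)\cdots\varphi^{-1}(a_n)$. But this product is in general a \emph{proper} subset of $\varphi^{-1}(a_1\cdots a_n)$: whenever some $b\in\Gamma$ has $|\varphi(b)|\geq 2$, the single letter $b$ cannot be split across two adjacent factors. Concretely, take $\Sigma=\{a\}$, $\Gamma=\{b\}$, $\varphi(b)=aa$ and $L_{\Omega_1}=\{aa\}$. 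Then $\varphi^{-1}(L_{\Omega_1})=\{b\}$, yet $\varphi^{-1}(a)=\emptyset$, so your grammar outputs the empty language. Your construction is only correct when $|\varphi(b)|\leq 1$ for every $b\in\Gamma$.

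The paper (following the classical argument, cf.\ \cite{AsveldChar,Culik}) takes a different route that avoids this factorisation problem. It first pads $L_{\Omega_1}$ by nondeterministically inserting arbitrary $\Gamma$-letters between and around the $\Sigma$-letters (a nondeterministic table looped at the accept states), obtaining a language $K\subseteq(\Sigma\cup\Gamma)^*$. It then intersects $K$ with the fixed regular language $S=\{\varphi(y_1)y_1\cdots\varphi(y_n)y_n\mid n\geq 1,\ y_i\in\Gamma\}$ using part~(3), and finally erases the $\Sigma$-letters via part~(2). The intersection with $S$ aligns a $\Gamma$-word with its $\varphi$-image \emph{globally} rather than letter by letter, so a single $b$ with $\varphi(b)=aa$ is matched against two consecutive $a$'s of the padded word without any splitting. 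The nondeterminism enters in the padding table, which is why only ET0L is obtained for item~(4).
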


The proof is straightforward 
keeping track of complexity in the standard proofs, for example \cite[Theorem V.1.7]{MathTheoryLSystems} and \cite{AsveldChar, Culik}. We include a proof here for completeness.
\begin{proof}
To prove item
(1), assume the E(D)T0L language $L_{\Omega_i}$ is given by the $4$-tuple
$( C_i,  \Sigma, c_{i}, \mathcal M_i)$ which can be  constructed in \NSPACE$(s(n))$, for $i=1,2$. Let $p_i$ be the start state for $\mathcal M_i$.
Construct a new E(D)T0L system $(C_1\cup C_2, \Sigma, c_*, \mathcal M')$ where $c_*$ is a new seed  word consisting of a single letter, $t_i$ are two deterministic  tables defined by the rule $(c_*, c_{i})$, and $p_*$ the start state for $\mathcal M'$.
The automaton  $\mathcal M'$ is obtained by 
printing two edges $(p_*, p_i)$ labeled $t_i$ then printing all edges for $\mathcal M_1,\mathcal M_2$.
  Printing these can be done in space $s(n)$ given printing the data for $\mathcal M_1,\mathcal M_2$ can. Moreover since the two additional tables are deterministic,  the resulting language is EDT0L if both $L_{\Omega_i}$ are.

For the remaining items, assume the E(D)T0L language $L_{\Omega_1}$ is given by the $4$-tuple
$( C,  \Sigma, c_{0}, \mathcal M)$ which can be  constructed in \NSPACE$(s(n))$, with $p_0$ the start state for $\mathcal M$.

For item (2), 
print $\mathcal M$ with the following modifications. Print a new unique accept state $p_{\text{accept}}$ and for  each previous accept state of $\mathcal M$, print an edge from it to $p_{\text{accept}}$ labeled by the homomorphism (deterministic table) $\psi|_\Sigma$.
The resulting  language is $\psi(L_{\Omega_1})$ and the additional space required is in $\Oh(s(n))$ given that  printing the data for $\mathcal M$ requires this much space. If  $L_{\Omega_1}$ is EDT0L, since the table $\psi|_\Sigma$ added is deterministic, then so is $\psi(L_{\Omega_1})$.

To prove item 
(3), assume $\mathcal N$ has states $Q=\{q_0,\dots, q_r\}$,  with start state $q_0$. Without restriction (since $\mathcal N$ has constant size) we may assume $\mathcal N$ is deterministic and has a unique accept state $q_{\text{accept}}$. Construct a new E(D)T0L system $(C_*, \Sigma_*, [q_0,c_*,q_{\text{accept}}], \mathcal M')$    to accept $L_{\Omega_1}\cap L(\mathcal N)$ in $\NSPACE(s(n))$ as follows.

The states for $\mathcal M'$ are the states of  $\mathcal M$ plus a new start state $p_*$.
The new extended alphabet is \[C_*=\{[q_i,c, q_j]\mid c\in C, q_i,q_j\in Q\}\cup\{[q_0,c_*,q_{\text{accept}}]\}\] 
with seed word the single letter $[q_0,c_*,q_{\text{accept}}]$ where $c_*\not\in C$ is a new symbol. Define  
\[\Sigma_*=\{[q_i,a, q_j]\mid a\in \Sigma, q_i,q_j\in Q, (q_i,q_j,a) \text{ is an edge of } \mathcal N\}.\] 
For each  $(c,v)\in (C\cup\{c_*\})\times C^* $, define $\mathfrak r(c,v)$ to be the following set: if $v=a_1\dots a_n$ with $a_i\in C$, then 
 \[ \mathfrak r(c,v)=\left\{
 \left([q_{i_0},c,q_{i_n}], [q_{i_0},a_1,q_{i_1}][q_{i_1},a_2,q_{i_2}]\dots [q_{i_{n-1}},a_n,q_{i_n}]\right)
 \ \middle | \  q_{i_j}\in Q \right\}.\]

For each $(c_*,x)\in \mathfrak r(c_*,c_0)$
print an edge from $p_*$ to $p_0$ labeled by the deterministic table which sends $c_*$ to $x$ (and leaves remaining letters fixed).
For each edge $(p_s,p_t,h)$ printed by the  algorithm producing $\mathcal M$,   print a constant number
of new edges from $p_s$ to $p_t$ labeled by tables obtained from $h$ by replacing each element $(c,v)\in h$ by some choice from $\mathfrak r(c,v)$.
Since $\mathcal N$ is constant size, each edge can be printed in  $\Oh(s(n))$ space, that is, the same space needed to print edges for $\mathcal M$. Each table printed is deterministic if the original $h$ was.

The language $K$ of this system is the set of all strings over $\Sigma_*$ of the form \[[q_{0},a_1,q_{i_1}][q_{i_1},a_2,q_{i_2}]\dots [q_{i_{n-1}},a_n,q_{\text{accept}}]\] where $a_1\dots a_n\in L_\Omega$, $q_0, q_{\text{accept}}$ are the unique start and accept states of  $\mathcal N$, and $q_{i_j}$ are any possible choice of states of $\mathcal N$.  
Finally, define the homomorphism $\tau:\Sigma_*\to \Sigma$ by $\tau([q_{i},a,q_{j}])=a$, then  by construction $\tau(K)=L_{\Omega_1}\cap L(\mathcal N)$, and  by item (1) $\tau(K)$ is E(D)T0L in $\NSPACE(s(n))$.

To prove item 
(4), we have $\varphi\colon \Gamma^*\to \Sigma^*$ and $L_{\Omega_1}$  given by the $4$-tuple
$( C,  \Sigma, c_{0}, \mathcal M)$. Assume without loss of generality that $\Gamma\cap C=\emptyset$. Let $K\subseteq (\Gamma\cup\Sigma)^*$  defined by 
\[K=\left\{y\in (\Gamma\cup\Sigma)^*\mid y=z_0x_1z_1\dots x_kz_k, x_1\dots x_k\in L_{\Omega_1}, x_i\in \Sigma, z_i\in \Gamma^*\right\}\] 
be a ``padded" copy of $L_{\Omega_1}$. Define a new extended alphabet $C'=C\cup \Gamma$, and define a non-deterministic table 
$h_0=\{(a,xay)\mid a\in \Sigma, x,y\in \Gamma\cup\{\epsilon\}\}$ (and fixes any $a\in C'\setminus \Sigma$). Each table labelling an edge in $\mathcal M$ can be viewed as a table in $\mathscr P(C'\times (C')^*)$ (again by convention tables are the identity on letters not in $\Sigma$).
Modify $\mathcal M$ by adding loops labelled by $h_0$ to each accept state to obtain $\mathcal M'$. The resulting system $(C', \Gamma\cup\Sigma, c_0, \mathcal M')$ is ET0L in the same space complexity as the initial system since the modifications depend only on  $\Gamma$ (which is fixed size) and $C$, and the language of this system in $K$.

Now consider 
 the  regular language $S=\{\varphi(y_1)y_1\dots \varphi(y_n)y_n\mid n\geq 1, y_i\in \Gamma\}$. Then $\Gamma$ is of fixed size (not part of the input); also, the automaton accepting $S$ consists of state $q_0$ which is both the start and unique accept state, and paths starting and ending at $q_0$ labeled by $\varphi(y_i)y_i$, so is of fixed constant size. Thus
 by item (3) $S\cap K$ is ET0L in $\NSPACE(s(n))$.
Finally, define a homomorphism $\tau: (\Gamma\cup \Sigma)^*\to \Sigma$ by $\tau(a)=a$ if $a\in \Gamma$ and $\tau(a)=\epsilon$ if $a\in \Sigma$. Then by item (2) 
  $\tau(K\cap S)$ is  ET0L in $\NSPACE(s(n))$, and by construction $\tau(K\cap S)=\phi^{-1}(L_{\Omega})$. \end{proof}

\begin{notation}\label{notation:Tr}  Let $\Sigma$ be an alphabet with $\#\not\in\Sigma$.
Let $\langT\subseteq \Sigma^*$ and $r\in \N$. Define 
$\langT_{\#r}=\{u_1\#\dots \#u_r\mid u_i\in \langT, 1\leq i\leq r\}$. (Note that this agrees with the usage in \cref{solutiondef} item (iv).)
\end{notation}

 \begin{proposition}[Projection onto a factor]\label{prop:projection}
 Let $\Sigma$ be an alphabet with $\#\not\in\Sigma$, and  $1\leq k\leq \ell$.
Suppose for each input $\Omega$ there is a language $L_{\Omega} \subseteq (\Sigma\cup\{\#\})^*$ and an  integer $r_\Omega\geq 0$ so that 
$L_{\Omega}\subseteq (\Sigma^*)_{\#r_\Omega}$. In that case  we can 
 define \[L_{\Omega,k,\ell}=\{u_k\#\dots\#u_\ell\mid u_1\#\dots \#u_k\#\dots\#u_\ell\#\dots \#u_{r_\Omega}\in L_{\Omega}\}\]  if $\ell\leq r_\Omega$ and $L_{\Omega,k,\ell}=\emptyset$ otherwise.
 Then
  if  $L_{\Omega}$ is E(D)T0L in \NSPACE$(s(n))$, then
 $L_{\Omega,k,\ell}$  is E(D)T0L  in \NSPACE$(s(n))$.  \end{proposition}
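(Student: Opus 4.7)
The plan is to realise $L_{\Omega,k,\ell}$ by applying the closure properties from \cref{prop:closureET0L}. If $\ell > r_\Omega$ then $L_{\Omega,k,\ell}=\emptyset$ is trivially E(D)T0L in any space bound, so assume $\ell \leq r_\Omega$.

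The key idea is to introduce a constant-sized ``marked'' alphabet
\[
\Delta \;=\; \Sigma_L \sqcup \Sigma_K \sqcup \Sigma_R \sqcup \{\#_L,\#_K,\#_R\},
\]
three disjoint copies of $\Sigma$ together with three variants of $\#$, corresponding to three \emph{zones}: ``left'' (factors $<k$, to be erased), ``keep'' (factors in $[k,\ell]$), and ``right'' (factors $>\ell$, to be erased). Let $\psi\colon \Delta^*\to(\Sigma\cup\{\#\})^*$ be the homomorphism identifying the three copies, let $R\subseteq \Delta^*$ be the regular language of words of the form
\[
\Sigma_L^*\,(\#_L\Sigma_L^*)^{k-1}\,(\#_K\Sigma_K^*)^{\ell-k}\,(\#_R\Sigma_R^*)^*,
\]
accepted by an NFA of $O(\ell)$ states (constant, since $k,\ell$ are fixed), and let $\tau\colon\Delta^*\to(\Sigma\cup\{\#\})^*$ be the homomorphism sending $\Sigma_L,\Sigma_R,\#_L,\#_R\mapsto \epsilon$, $\#_K\mapsto\#$, and canonically identifying $\Sigma_K$ with $\Sigma$. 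By construction, $\tau(\psi^{-1}(L_\Omega)\cap R)=L_{\Omega,k,\ell}$.

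For the ET0L case, apply \cref{prop:closureET0L}(4) to obtain that $\psi^{-1}(L_\Omega)$ is ET0L in $\NSPACE(s(n))$; by item~(3), so is $\psi^{-1}(L_\Omega)\cap R$; and by item~(2), so is its image under $\tau$, which equals $L_{\Omega,k,\ell}$.

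For the EDT0L case the inverse homomorphism step is non-deterministic, so I plan to bypass it by directly modifying the given EDT0L grammar $(C,\Sigma\cup\{\#\},c_0,\mathcal M)$ for $L_\Omega$. Extend $C$ with zone-tagged copies of each letter (plus boundary-labelled copies of $\#$), tag the seed $c_0$ with the initial zone information prescribed by $R$, and modify each table so that zone tags propagate deterministically through derivations. Since each derivation of an EDT0L grammar produces a unique word and the zone of each output letter is uniquely determined by its position among the $\#$'s, this tagging can be made deterministic. Finally compose with $\tau$, which by item~(2) preserves EDT0L and the space bound.

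The main obstacle will be the EDT0L case: a single rule $(c,v)$ in the original grammar may produce $\#$'s whose global position (and thus zone) depends on the rest of the derivation, not just on $c$ and $v$. The fix is to carry in the extended alphabet not only a zone label but also a ``start/end zone'' context for each tagged letter, so that for each tagged input exactly one expansion is valid and the modified tables remain deterministic. Since there are only $3$ zones, this gives a constant-factor blowup of $C$, so all constructions fit within $\NSPACE(s(n))$ as required.
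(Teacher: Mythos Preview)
Your ET0L argument via closure properties (inverse homomorphism, intersection with a fixed regular language, homomorphism) is sound and is a cleaner route than the paper's direct construction for that case; modulo an off-by-one in your description of $R$ (your $\Sigma_L^*(\#_L\Sigma_L^*)^{k-1}$ places factor $u_k$ in the left zone rather than the keep zone), this works.

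The EDT0L argument, however, has two genuine gaps.

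First, three zones are not enough. With only the labels $L,K,R$ there is nothing in the grammar that forces exactly $k-1$ of the $\#$'s to lie in the left zone: any contiguous block of factors could end up tagged $K$, so after erasing $L$ and $R$ you would obtain every $u_i\#\dots\#u_j$, not just $u_k\#\dots\#u_\ell$. In your ET0L version the regular language $R$ does this counting for you, and you correctly note its NFA has $O(\ell)$ states; baking $R$ into the grammar therefore requires tracking those $O(\ell)$ states (still a constant blow-up, since $k,\ell$ are fixed), not merely three zones.

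Second, and more seriously, the sentence ``for each tagged input exactly one expansion is valid'' is false, and this is precisely the crux. If a letter carries the tag $[z,z']$ and the original rule sends $c\mapsto a_1a_2$, there are in general several consistent ways to split $[z,z']$ between $a_1$ and $a_2$; no amount of start/end context on single letters resolves this, because the correct split depends on how many $\#$'s the \emph{future} derivation will produce from each $a_i$. The paper handles this not by finding a single deterministic replacement table, but by replacing each edge labelled $h$ in the control NFA by \emph{many} edges, one for every global choice of splittings; each resulting table is deterministic because a choice has been fixed for every tagged letter, and the nondeterminism is pushed into the rational control. Concretely, the paper tags letters with pairs of factor indices $[i,c,j]$ (so $O(r_\Omega^2\cdot|C|)$ letters) and, for each rule $(c,a_1\dots a_n)$, enumerates all monotone index sequences $i=s_0\le s_1\le\dots\le s_n=j$; a final homomorphism keeps only $[i,a,i]$ with $k\le i\le \ell$ and $[j,\#,j+1]$ with $k\le j<\ell$. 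Your plan can be repaired along the same lines (using $O(\ell)$ zones and the multiple-tables trick), but as written it does not yet contain either ingredient.
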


\begin{proof}
Assume the E(D)T0L language $L_{\Omega}$ is given by the $4$-tuple
$( C,  \Sigma\cup\{\#\}, c_0, \mathcal M)$ which can be  constructed in \NSPACE$(s(n))$.
Construct a new E(D)T0L system $( C_*,  \Sigma\cup\{\#\}, [1,c_*, r_\Omega], \mathcal M')$ as follows.
The new alphabet is
\[C_*=\{[i,c,j], [1,c_*, r_\Omega]\mid c\in C, 1\leq i\leq j\leq r_\Omega\}\cup \Sigma\cup\{\#\},\] where $[1,c_*,r_\Omega]$ is a new seed letter, $c_*\not\in C$. The notation $[i,c,j]$ is intended to indicate that the word  in $(\Sigma\cup\{\#\})^*$ eventually produced in $L_{\Omega}$  by  following some tables starting with $c$ will be $v_i\#u_{i+1}\#\dots u_{j-1}\#v_j$, where $v_i$ is a suffix of $u_i$ and $v_j$ is a prefix of $u_j$.

 For each  $(c,v)\in (C\cup\{c_*\})\times C^* $, define $\mathfrak r(c,v)$ to be the following set: if $v=a_1\dots a_n$ with $a_i\in C$, then  $\mathfrak r(c,v)=$
 \[ \left\{([i,c,j],[i,a_1,s_1][i_1,a_2,i_2]\dots [s_{n-1},a_n,j]\mid 1\leq  i\leq s_1\leq \dots \leq s_{n-1}\leq j\leq r_\Omega\right\}.\]

Let $p_0$ be the start node of $\mathcal M$.
Make a new start node $p_*$, and for each element  $([1,c_*, r_\Omega],x)\in \mathfrak r(c_*,c_0)$ print an edge from $p_*$ to $p_0$ labeled by the table which sends $[1,c_*, r_\Omega]$ to $x$. 
For each edge in $\mathcal M$ labeled by table $h$, print all possible edges labeled by tables obtained from $h$ by replacing each rule $(c,v)$ in $h$ by some choice of element of $\mathfrak r(c,v)$. Note that the space required to print each edge is $\Oh(s(n))$.
For each final state of $\mathcal M$, print an edge to a new final state $q_*$ labeled by the homomorphism which for all $k\leq i\leq \ell,  k\leq j< \ell, c\in \Sigma$  sends $[i,c,i]$ to $c$  and $[j,\#,j+1]$ to $\#$ (and is constant on all other letters).

The resulting language is exactly the set of factors of $L_\Omega$ as required, the space to print $C_*$ and $\mathcal M'$ is $\Oh(s(n))$, and  all tables labelling edges in $\mathcal M'$ are deterministic whenever all tables were in $\mathcal M$.
 \end{proof}

\section{Preliminaries -- Hyperbolic groups}\label{sec:hyp-intro}

Recall that the {\em Cayley graph} for a group $G$ with respect to a finite symmetric generating set $S$ is the directed graph $\Gamma(G,S)$ with vertices labeled by $g\in G$ and a directed edge $(g,h)$ labeled by $s\in S$ whenever $h=_Ggs$.
Let $\ell_{\Gamma(G,S)}(p)$, $i(p)$ and $f(p)$ be the length, initial and terminal vertices of a path $p$ in the Cayley graph, respectively. 
A path  $p$ is {\it geodesic} if 
$\ell_{\Gamma(G,S)}(p)$ is minimal among the lengths of all paths $q$ with the same endpoints. 
If $x,y$ are two points in  $\Gamma(G,S)$, we define $d_S(x,y)$ to be the length of a shortest path from $x$ to $y$ in  $\Gamma(G,S)$.
We use $d(x,y)$ and $\ell(p)$ if the group $G$ and set $S$ are clear from the context.

 \begin{definition}[$\delta$-hyperbolic group (Gromov)]
 Let $G$ be a group with finite symmetric generating set $S$, and let $\delta\geq 0$ be a fixed real number. 
 If  $p,q,r$ are geodesic paths in $\Gamma(G,S)$ with $f(p)=i(q),f(q)=i(r),f(r)=i(p)$, we call $[p,q,r]$ a 
{\em geodesic triangle}. 
A geodesic triangle is {\em $\delta$-slim} if $p$ is contained in a $\delta$-neighbourhood of $q\cup r$, that is, every point on one side of the triangle is within $\delta$ of some point on one of the other sides.
We say $(G,S)$ is {\em $\delta$-hyperbolic} if every geodesic triangle in $\Gamma(G,S)$ is $\delta$-slim. We say $(G,S)$ is {\em hyperbolic} if it is $\delta$-hyperbolic for some $\delta\geq 0$. 
 \end{definition}

\begin{lemma}[{Dehn presentation, \cite[Theorems 2.12 and 2.16]{MSRINotes}}] 
 \label{lem:Dehn}
 $G$ is hyperbolic if and only if 
 there is a finite list of pairs of words $(u_i,v_i)\in S^*\times S^*$ with $|u_i|>|v_i|$ and $u_i=_G v_i$ such that the following holds: if $w\in S^*$ is equal to the identity of $G$ then it contains some $u_i$ as a subword.
 
 \end{lemma}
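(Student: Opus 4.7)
The plan is to prove both implications using standard arguments from geometric group theory.

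\textbf{Forward direction.} Assume $(G,S)$ is $\delta$-hyperbolic. Take a constant $K$ depending only on $\delta$ (for instance $K=8\delta+1$) and let $\{(u_i,v_i)\}$ be the finite collection of all pairs with $u_i\in S^*$ of length at most $K$, $v_i\in S^*$ a shortest word with $v_i=_G u_i$, and $|v_i|<|u_i|$; this list is finite because $S$ is. Let $w\in S^*$ represent the identity and assume $w$ is non-empty and cyclically reduced (otherwise $w$ already contains a subword $ss^{-1}$, which appears among the $u_i$ with $v_i$ empty). View $w$ as labelling a closed loop at $1$ in $\Gamma(G,S)$ and split $w=w_1w_2$ near the midpoint, so $w_1$ and $w_2^{-1}$ label two paths between $1$ and a common vertex, forming a geodesic bigon (a degenerate triangle with two vertices coinciding). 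Applying $\delta$-slimness to this bigon, each point of $w_1$ lies within $\delta$ of $w_2^{-1}$; tracking these matchings along the loop produces two points on $w$ whose word-distance along $w$ is at most $K$ but whose group-distance is strictly smaller. The subword of $w$ between them is therefore a non-geodesic word of length at most $K$, equal in $G$ to some strictly shorter word, so it is one of the $u_i$.

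\textbf{Reverse direction.} Suppose the list $\{(u_i,v_i)\}$ exists. Given any non-empty $w\in S^*$ with $w=_G 1$, locate a subword $u_i$ in $w$, replace it by $v_i$, and note that the resulting word still represents the identity and has strictly smaller length. Iterating reduces $w$ to the empty word in at most $|w|$ steps. Each replacement corresponds to applying one of the finitely many relators $u_iv_i^{-1}$, so $w$ bounds a van Kampen diagram of area at most $|w|$ over the finite presentation $\langle S\mid u_iv_i^{-1}\rangle$. A linear isoperimetric inequality is equivalent to hyperbolicity by Gromov's theorem, so $(G,S)$ is hyperbolic.

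The main obstacle is the forward direction: extracting a short non-geodesic subword from the thin bigon requires a careful quantitative analysis of how the $\delta$-fellow-travelling between $w_1$ and $w_2^{-1}$ propagates into a genuine shortcut of bounded size along $w$. This is precisely the geometric content of Dehn's algorithm for hyperbolic groups. The reverse direction, by contrast, is routine once one invokes the standard equivalence between a linear Dehn function and hyperbolicity.
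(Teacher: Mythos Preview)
The paper does not prove this lemma at all; it simply cites \cite[Theorems 2.12 and 2.16]{MSRINotes} and moves on. So there is no ``paper's own proof'' to compare against, and the question is only whether your argument stands on its own.

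Your reverse direction is fine: the list of pairs gives a finite presentation with a linear Dehn function, and the equivalence of a linear isoperimetric inequality with hyperbolicity is the standard Gromov result.

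The forward direction, however, has a real gap. You split an arbitrary word $w$ with $w=_G 1$ into halves $w_1, w_2$ and then apply $\delta$-slimness to the ``bigon'' they form. But $\delta$-slimness is a statement about \emph{geodesic} triangles (or bigons), and $w_1, w_2^{-1}$ are just arbitrary paths in $\Gamma(G,S)$, not geodesics. There is no reason the two halves of a random loop should $\delta$-fellow-travel one another; indeed this fails badly in general. So the step ``each point of $w_1$ lies within $\delta$ of $w_2^{-1}$'' is unjustified and the rest of the argument collapses.

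The standard route (and the one in the cited reference) is a local-to-global principle: one shows that any path in a $\delta$-hyperbolic space all of whose subpaths of length at most $K=K(\delta)$ are geodesic is itself globally a quasigeodesic (with constants depending only on $\delta$). In particular such a path cannot be a nontrivial closed loop. Contrapositively, any nontrivial loop $w$ must contain a non-geodesic subword of length at most $K$, which is exactly one of the $u_i$. Your proposal gestures at this in the final paragraph (``extracting a short non-geodesic subword \dots requires a careful quantitative analysis''), but the bigon setup you actually wrote down is not the right framework for that analysis.
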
 
 This gives  an algorithm to decide whether or not a word $w\in S^*$ is equal to the identity: while $\abs{w}_S>0$, look for some $u_i$ subword.
 If there is none, then $w\neq_G 1$. Else replace $u_i$ by $v_i$ (which is shorter). This procedure is called {\em Dehn's algorithm}. The following is immediate.

  \begin{lemma}
 Dehn's algorithm runs in 
 linear space.\end{lemma}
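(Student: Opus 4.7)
The plan is straightforward: Dehn's algorithm stores a current word on the work tape and repeatedly shortens it, so the main task is to check that this storage never exceeds $\Oh(n)$ cells and that the auxiliary bookkeeping (pattern-matching against the fixed list $\{(u_i,v_i)\}$) also fits in $\Oh(n)$ space.

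First, I would observe that the list of Dehn pairs $\{(u_i,v_i)\}$ depends only on $(G,S)$, not on the input word, so it is a constant of the problem and can be hard-coded into the Turing machine (contributing only $\Oh(1)$ to the space bound). The algorithm initialises by copying the input word $w$ of length $n$ onto the work tape. Scanning the current word for an occurrence of some $u_i$ requires only $\Oh(\log |w|)$ additional cells for pointers, which is absorbed by the $\Oh(|w|)$ term.

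The key invariant is that the length of the current word is monotonically non-increasing: each replacement substitutes a subword $u_i$ by $v_i$ with $|v_i|<|u_i|$, so the stored word strictly shortens at every iteration. Hence the tape content representing $w$ never exceeds its initial length $n$, and the replacement itself can be carried out in place with a simple left-shift of the suffix using one $\Oh(\log n)$ pointer. Combining this with \cref{lem:Dehn}, the procedure terminates (since $|w|$ is a non-negative integer strictly decreasing at each step) and uses $\Oh(n)$ space in total, at which point $w$ is either empty (so $w=_G1$) or contains no $u_i$ as a subword (so $w\neq_G 1$).

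The ``hard part'' is essentially nonexistent: the only thing to verify is that the in-place replacement of $u_i$ by $v_i$ does not require extra scratch tape proportional to $|w|$, which is handled by the observation above. Thus the result is an immediate corollary of \cref{lem:Dehn} together with the fact that the Dehn list is a fixed finite constant depending only on $(G,S)$.
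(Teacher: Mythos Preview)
Your proposal is correct. The paper itself gives no proof of this lemma beyond the remark ``The following is immediate,'' so you have simply spelled out the obvious details (fixed Dehn list, monotonically shrinking work tape) that the authors left to the reader.
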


\begin{definition}[Quasigeodesic]\label{defn:qg}
Let $(X,d)$ be a metric space.
For $\lambda \geq 1, \mu\geq 0$ real numbers, a path $p$ in $(X,d)$ is an $(X,\lambda, \mu)$-{\em \qgeod} 
if for any subpath $q$ of $p$ we have 
$\ell(q)\leq \lambda d(i(q),f(q))+\mu$. Let $Q_{ X,\lambda, \mu}$ 
be the set of all $(X,\lambda, \mu)$-\qgeods. 
 
 When $(X,d)$ is the Cayley graph of a group $G$ with respect to a generating set $S$, we use the notation $(G,S,\lambda, \mu)$-{\em \qgeod},
 and 
 denote by \[Q_{ G,S,\lambda, \mu}=Q_{ \Gamma(G,S),\lambda, \mu}\cap S^*\] the set of all $(\lambda, \mu)$-\qgeod \ paths between vertices of the Cayley graph, or equivalently, paths corresponding to words over $S$.
\end{definition}

 \begin{lemma}[{Change of generating set is a quasi-isometry; folklore}]
 \label{change_of_gset}
 Let $S_1, S_2$ be two finite symmetric generating sets for a group $G$, $\pi_i:S^*_i \mapsto G$ the natural projection maps, and $\psi: S_1^*\to S_2^*$ the monoid morphism which satisfies $\pi_1(s)=\pi_2(\psi(s))$ for all $s \in S_1$.
Then for each $\lambda\geq 1, \mu \geq 0$ there exist  $\lambda',\mu'$ so that if $p$ is a $(G,S_1, \lambda,\mu)$-\qgeod, then $\psi(p)$ is a  $(G,S_2, \lambda',\mu')$-\qgeod.
 \end{lemma}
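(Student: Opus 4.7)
The plan is to reduce to the standard bi-Lipschitz comparison between the two Cayley metrics on $G$. First I will set $K_1 := \max\{\abs{\psi(s)}_{S_2} \mid s \in S_1\}$, which is finite since $S_1$ is finite. Symmetrically, for each $t \in S_2$ I pick a word $\phi(t) \in S_1^*$ with $\pi_1(\phi(t)) = \pi_2(t)$ and set $K_2 := \max\{\abs{\phi(t)}_{S_1} \mid t \in S_2\}$. These two constants give the standard comparison $\tfrac{1}{K_2}\, d_{S_1}(g, h) \leq d_{S_2}(g, h) \leq K_1\, d_{S_1}(g, h)$ for all $g, h \in G$; in particular, $\psi$ sends each edge of $\Gamma(G, S_1)$ to a path in $\Gamma(G, S_2)$ of length at most $K_1$.

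Next I will take a $(G, S_1, \lambda, \mu)$-\qgeod\ $p = s_1 \cdots s_n$ and analyse an arbitrary subpath $q'$ of $\psi(p) = \psi(s_1) \cdots \psi(s_n)$. Such a $q'$ factors as $\alpha\, \psi(s_{i+1}) \cdots \psi(s_{j-1})\, \beta$, where $\alpha$ is a suffix of $\psi(s_i)$ and $\beta$ is a prefix of $\psi(s_j)$ (with the obvious modification when $i = j$). Two elementary bounds follow from the definition of $K_1$: first, $\abs{q'}_{S_2} \leq K_1(j - i + 1)$; second, the endpoints $i(q')$ and $f(q')$ lie within $S_2$-distance $K_1$ of the corresponding endpoints $i(q)$ and $f(q)$ of the $S_1$-subpath $q := s_i s_{i+1} \cdots s_j$ of $p$.

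Combining these with the quasigeodesic inequality $\abs{q}_{S_1} \leq \lambda\, d_{S_1}(i(q), f(q)) + \mu$ for $p$, with the metric comparison $d_{S_1} \leq K_2\, d_{S_2}$, and with the triangle inequality $d_{S_2}(i(q), f(q)) \leq d_{S_2}(i(q'), f(q')) + 2K_1$, I obtain
\[
\abs{q'}_{S_2} \;\leq\; K_1 K_2 \lambda\, d_{S_2}(i(q'), f(q')) \;+\; 2K_1^2 K_2 \lambda \;+\; K_1 \mu,
\]
so $\lambda' := K_1 K_2 \lambda$ and $\mu' := 2K_1^2 K_2 \lambda + K_1 \mu$ do the job.

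There is no substantive obstacle in this folklore computation; the only care required is the bookkeeping of the boundary fragments $\alpha$ and $\beta$, ensuring that both the length estimate for $q'$ and the displacement of its endpoints from those of $q$ are simultaneously controlled by the same constant $K_1$, together with the degenerate case $i = j$ (where $q'$ sits inside a single $\psi(s_i)$ and $\abs{q'}_{S_2} \leq K_1$ is absorbed into $\mu'$).
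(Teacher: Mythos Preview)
The paper states this lemma as folklore and gives no proof, so there is nothing to compare against. Your argument is correct and is precisely the standard computation: the bi-Lipschitz comparison of word metrics via $K_1,K_2$, together with the observation that an arbitrary subpath of $\psi(p)$ is sandwiched between two vertices that are each within $K_1$ of vertices on $p$, yields the displayed inequality with $\lambda' = K_1K_2\lambda$ and $\mu' = 2K_1^2K_2\lambda + K_1\mu$; the boundary case $i=j$ is handled as you note.
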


  Throughout this article, we assume $G$ is a fixed hyperbolic group with finite  generating set $S$ which we treat as a constant for complexity purposes. We also assume we are given  the constant $\delta$, the finite list of pairs $(u_i,v_i)$ for Dehn's algorithm, and any other constants depending only on the group, for example the constants $\lambda_G,\mu_G$ in \cref{canrep_qg} below. 
   
 \medskip

\subsection{Languages in hyperbolic groups}
 
 Let $\Lambda$ be an alphabet and $\$$ a symbol not in $\Lambda$. 
 An {\em asynchronous $2$-tape automaton} is a finite state automaton $\mathcal M$ with alphabet
  \[\mathcal X=\left\{{s\choose t},  {s\choose \$}, {\$\choose s}   \ \middle | \  s,t\in \Lambda \right\}.\] 
  The 2-tape automaton is {\em synchronous} if it only accepts words \[{x_1\choose y_1}\dots  {x_n\choose y_n}\] with either $x_1\dots x_n\in \Lambda^*$ and $y_1\dots y_n\in \Lambda^*\{\$\}^*$; or 
  $x_1\dots x_n\in \Lambda^*\{\$\}^*$ and $y_1\dots y_n\in \Lambda^*$.
  
   Define homomorphisms $\xi_1,\xi_2\colon \mathcal X  \to (\Lambda \cup \Lambda)^*$ by 
 \[ \xi_1\left({x\choose y}\right)=\left\{\begin{array}{llll} x & & x\in \Lambda\\1 & \ \ & x=\$ \end{array}\right. \text{ and  }  \ \  \xi_2\left({x\choose y}\right)=\left\{\begin{array}{llll} y & & y\in \Lambda \\1 & \ \ & y=\$\end{array}\right..\]  
  We say the pair  $(u,v)\in \Lambda^2$ is accepted by the (synchronous or asynchronous) 2-tape automaton if there is a word \[{x_1\choose y_1}\dots  {x_n\choose y_n}\] accepted by $\mathcal M$ such that $u=\xi_1(x_1\dots x_n)$ and $v=\xi(y_1\dots y_n)$.
See \cite[Definition 1.4.5 and \S{7.1}]{WordProc}  and \cite[\S{2}]{HoltRees} for equivalent formulations.

 It is standard practice for finitely generated groups to fix an order on the generators, order which can be extended to (a lexicographic order for) all words over that generating set, and then for each group element to choose a \emph{shortlex representative}; that is, choose the smallest word, in the lexicographic order, among all the words representing the group element.

\begin{proposition}[See \cite{WordProc,HoltRees}]\label{prop:reg-hyp}

Let $G$ be a fixed hyperbolic group with finite  generating set $S$, and $\lambda\geq 1, \mu\geq 0$ constants with $\lambda\in\mathbb Q$.
Then the following sets are regular languages.
\begin{enumerate}
\item
The set of all geodesics over $S$.
\item
The set of all shortlex geodesics over $S$. 
\item The set of \qgeods \
$Q_{G,S,\lambda, \mu}\subseteq S^*$. 
\end{enumerate}
Furthermore, the set of all pairs of words $(u,v)\in Q_{G,S,\lambda, \mu}^2$ such that $u=_Gv$ is accepted by an asynchronous 2-tape automaton.
\end{proposition}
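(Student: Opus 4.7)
The plan is to rely on the shortlex automatic structure of hyperbolic groups, together with the stability of quasigeodesics (Morse lemma) in $\delta$-hyperbolic spaces. Throughout, the constants $\delta,\lambda,\mu$ and the generating set $S$ are fixed, so any bound depending only on these may be treated as an absolute constant when bounding the size of the finite-state machines built below.

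For (1) and (2) the plan is to invoke the classical shortlex automatic structure on $(G,S)$. The starting point is the $k$-fellow-traveller property: if $u,v\in S^*$ label geodesic paths from $1$ to $g$ and from $1$ to $gs$ respectively in $\Gamma(G,S)$, then for a constant $k=k(\delta,|S|)$ their synchronous prefixes stay within distance $k$. This lets me build a finite-state ``word acceptor'' whose states record the configuration in the ball of radius $k$ around the endpoint of the current prefix; the acceptor accepts $w$ iff no local shortening is detected in this $k$-ball, which by \cref{lem:Dehn} is equivalent to $w$ being geodesic. The shortlex-geodesic acceptor is then obtained by additionally tracking, in the same bounded window, the lex comparison of $w$ against any competitor geodesic with the same endpoint — again a regular condition.

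For (3) the key ingredient is the Morse lemma for $\delta$-hyperbolic spaces: every $(\lambda,\mu)$-quasigeodesic lies in a Hausdorff $R$-neighbourhood of a true geodesic with the same endpoints, where $R=R(\delta,\lambda,\mu)$. A standard local-to-global argument then shows that a word $w\in S^*$ is an $(\lambda,\mu)$-quasigeodesic iff every subword of length at most some $N=N(\delta,\lambda,\mu)$ satisfies the defining inequality. Rationality of $\lambda$ is what ensures that the check $\ell(q)\leq \lambda d(i(q),f(q))+\mu$ on such a bounded-length subword can be performed by a finite-state device, after clearing denominators in $\lambda$ so that only finitely many integer comparisons appear. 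Hence $Q_{G,S,\lambda,\mu}$ is the complement in $S^*$ of the regular set of words containing a ``bad'' subword, so it is regular.

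For the final claim I would construct an asynchronous 2-tape automaton as follows. Two $(\lambda,\mu)$-quasigeodesics $u,v$ with the same endpoints in $\Gamma(G,S)$ admit monotone reparameterisations whose current endpoints remain at Cayley distance at most $K=K(\delta,\lambda,\mu)$ (a consequence of the Morse lemma applied to both, together with $\delta$-thinness of triangles spanned by their common endpoints). The 2-tape machine nondeterministically decides at each step whether to advance on tape one, tape two, or both, and its state records (i) the group-element difference between the two current endpoints, which lies in the finite $K$-ball around $1$, and (ii) the bounded windows from step (3) needed to certify $u,v\in Q_{G,S,\lambda,\mu}$ on the fly. Acceptance requires that both inputs be exhausted and the recorded difference be trivial. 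The main obstacle here is pinning down the explicit constants $N$ and $K$ from the Morse lemma and verifying the local-to-global step for quasigeodesics in the required rational-$\lambda$ form; once this is done, items (1), (2) and the 2-tape claim all follow from a common template in which a finite-state control remembers the contents of a $\delta$-determined window around the current endpoint(s) in $\Gamma(G,S)$, and all constraints to be verified are local in that window.
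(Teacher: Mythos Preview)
The paper does not give its own proof of this proposition; it simply cites \cite{WordProc,HoltRees} and remarks afterwards that Holt--Rees prove more (in particular, that $Q_{G,S,\lambda,\mu}$ is \emph{never} regular when $\lambda\notin\mathbb Q$). So there is nothing to compare your argument against except the cited literature.

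Your outline for (1), (2) and the asynchronous 2-tape automaton is the standard one and is essentially what \cite{WordProc,HoltRees} do. One small slip: invoking \cref{lem:Dehn} to certify geodesicity is not quite right --- Dehn's algorithm detects words equal to the identity, not non-geodesic words in general. The regularity of the geodesic language comes instead from the fellow-traveller property and the automatic structure you already mentioned, not from Dehn's algorithm.

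There is a genuine gap in your treatment of (3). You claim that $w\in Q_{G,S,\lambda,\mu}$ if and only if every subword of length at most some $N=N(\delta,\lambda,\mu)$ satisfies the defining inequality, and you attribute this to the Morse lemma plus ``a standard local-to-global argument''. But the usual local-to-global statement only says that a locally $(\lambda,\mu)$-quasigeodesic path is globally $(\lambda',\mu')$-quasigeodesic for some \emph{other} constants $\lambda',\mu'$; it does not recover the \emph{same} $(\lambda,\mu)$. Getting the exact set $Q_{G,S,\lambda,\mu}$ as a locally checkable (hence regular) language is precisely the content of the Holt--Rees theorem, and it is where rationality of $\lambda$ is actually used. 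Your stated reason for needing $\lambda\in\mathbb Q$ --- that otherwise the inequality on a bounded-length subword cannot be checked by a finite device --- is incorrect: for subwords of bounded length there are only finitely many possible pairs $(\ell(q),d(i(q),f(q)))$, so the check is finite-state for any real $\lambda$. The real obstruction is that for irrational $\lambda$ the local-to-global equivalence with the \emph{same} constants fails, and indeed Holt--Rees show $Q_{G,S,\lambda,\mu}$ is not regular in that case. So your sketch identifies the right mechanism (local testability) but misattributes both its source and the role of the rationality hypothesis; filling this in properly amounts to reproving the Holt--Rees result.
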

Note that Holt and Rees \cite{HoltRees} show more: if $\lambda\not\in \mathbb Q$ then the set $Q_{G,S,\lambda, \mu}$  is never regular, and in the special case that $(\lambda,\mu)$ are ``exact" then the 2-tape automaton is synchronous. We do not need these sharper results here.

\subsection{Quasi-isometrically embedded rational constraints}\label{subsec:qierDefn}
Let $G$ be a group generated by a symmetric set $S$, and let $\pi:S^* \rightarrow G$ be the natural projection as before. Recall that for an element $g \in G$ we denote the geodesic length of $g$ with respect to the word metric for $S$ by $\Abs{g}_S$. 

\begin{definition}[see \cite{DG}]\label{defn:qier}
(1) A regular language $\widetilde{R} \subseteq S^*$ is $(\lambda,\mu)$-\emph{quasi-isometrically embedded}  in $G$ if for any 
$w \in \widetilde{R}$
$$\Abs{\pi(w)}_S \geq \frac{1}{\lambda}\abs{w}_S-\mu.$$
(2) A rational set $R \subseteq G$ is \emph{quasi-isometrically embeddable}  in $G$ if there exist real numbers $\lambda \geq 1$ and $\mu \geq 0$  and a $(\lambda,\mu)$-quasi-isometrically embedded regular language $\widetilde{R}\subseteq S^*$ such that $\pi(\widetilde{R})=R.$ In this case we say that $R$ is {\em \Qier}, abbreviated as {\em \qier}.
\end{definition}

For the purposes of our complexity and computability results, we also define the following.
\begin{definition}[Effective and explicit]
\label{defn:explicit}
A rational set $R$ is said to be {\em effective \Qier}  (abbreviated  as {\em effective \qier}) if we are given an NFA $A$ such that there exist constants $\lambda\geq 1,\mu\geq 0$ with  $L(A)\subseteq Q_{G,S,\lambda,\mu}$ and $\pi(L(A))=R.$
The set $R$  is said to be {\em explicit qier} if, in addition, the constants $\lambda\geq 1,\mu\geq 0$ are also  given.
\end{definition}

In \cite{DG}, decidability statements use the notion of effective \qier\ as defined here. Moreover, \DG\ show that given an effective \qier\ set, there is an algorithm to compute constants $\lambda, \mu$ such that the set is explicit \qier\ with respect to these constants,
see \cref{sec:DG9.4issue} below.
\smallskip

\subsection{The Rips complex}

For any metric space $X=(X,d)$ and constant $r$, the {\em Rips complex} with parameter $r$ is the simplicial complex whose
vertices are the points of $X$ and whose simplices are the finite subsets of $X$ whose diameter is at
most $r$. More specifically, we will need the Rips complex which is based on the Cayley graph of a hyperbolic group, as follows.

\begin{definition}[Rips Complex]\label{RipsComplex}
Let $G$ be a hyperbolic group with finite generating set $S$. For a fixed constant $r$, the Rips complex $\mathcal{P}_r(G)$ is a simplicial complex defined as the collection of sets with diameter $\leq r$ (with respect to the $d_S$-distance in $\Gamma(G,S)$):
$$\mathcal{P}_r(G)=\left\{Y\subset G   \ \middle | \  Y\neq \emptyset, \textrm{ \ diam}_S(Y) \leq r\right\},$$
where each set $Y \in \mathcal{P}_r(G)$ of cardinality $k+1$ is identified with a $k$-simplex whose vertex set is $Y$.
\end{definition}

The Rips complex of a hyperbolic group has several important properties, relevant to this paper. The group $G$ acts properly discontinuously on the Rips complex $\mathcal{P}_r(G)$, the quotient $\mathcal{P}_r(G)/G$ is compact, and $\mathcal{P}_r(G)$ is contractible. In \cref{sec:torsion} we will work with the barycentric subdivision of $\mathcal{P}_r(G)$, which we recall next.

\begin{definition}[Barycentric subdivision]
\leavevmode
Let $\sigma$ be a simplicial complex. For a simplex $\tau=\{v_0, v_1, \dots, v_q\} \in \sigma$ denote by $b_{\tau}$ its \emph{barycentre}, and 
for two simplices $\alpha, \beta$ in $\sigma$ write $\alpha<\beta$ to denote that $\alpha$ is a face of $\beta$. 

The \emph{barycentric subdivision} $B_{\sigma}$ of a simplicial complex $\sigma$ is the collection of all simplices whose vertices are $b_{\sigma_0}, \dots, b_{\sigma_r}$ for some sequence $\sigma_0 < \dots <\sigma_r$ in $\sigma$. 
\end{definition}

The set of vertices of $B_{\sigma}$ is the set of all barycentres of simplices of $\sigma$, $B_{\sigma}$ has the same dimension as $\sigma$, and any vertex in $B_{\sigma} \setminus \sigma$ is connected to a vertex in $\sigma$.

\section{Doubling and copying}\label{sec:Copy}

In computing the full solution set to equations as shortlex geodesic words, we will need to take inverse homomorphism.
Even though in general the image under an inverse homomorphism of an EDT0L language is just ET0L, because of the special structure of solution sets we can apply the {\em  Copying Lemma} of Ehrenfeucht and  Rozenberg \cite{EhrenRozenInverseHomomEDT0L}
 to show the statement in \cref{prop:shortlex-hyp}.
This is indeed a  {\em trick} -- without it we would only be able to state our main structural results (solutions as shortlex geodesics) at ET0L languages, whereas EDT0L is a much smaller class and hence we have stronger statements.

\begin{lemma}[{Copying Lemma, \cite[Theorem 1]{EhrenRozenInverseHomomEDT0L}}]
\label{lem:copyER}
Let $\Sigma_1,\Sigma_2$ be two finite disjoint alphabets, $K_1\subseteq \Sigma_1^*$ and $K_2\subseteq \Sigma_2^*$.
Let $f$ be a bijective function from $K_1$ onto $K_2$. Let $K=\{wf(w)\mid w\in K_1\}$. If $K$ is ET0L, then $K,K_1,K_2$ are each EDT0L.

\end{lemma}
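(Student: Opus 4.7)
The plan is to first prove that $K$ itself is EDT0L, and then derive the same for $K_1$ and $K_2$ by taking the two deterministic terminal homomorphisms $\Sigma_1 \cup \Sigma_2 \to \Sigma_1^*$ and $\Sigma_1 \cup \Sigma_2 \to \Sigma_2^*$ that erase $\Sigma_2$ or $\Sigma_1$ respectively, invoking closure of EDT0L under homomorphism from \cref{prop:closureET0L}.

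To show $K$ is EDT0L, I would start with an ET0L system $\mathcal G = (C, \Sigma_1 \cup \Sigma_2, c_0, H, \mathcal A)$ with $L(\mathcal G) = K$. The key structural observation is that since $\Sigma_1 \cap \Sigma_2 = \emptyset$ and every terminal word has the shape $wf(w)$ with $w \in \Sigma_1^{*}$ and $f(w) \in \Sigma_2^{*}$, every leaf of every derivation tree is committed to one of the two disjoint terminal alphabets. Propagating this commitment upward, each occurrence of a letter in any intermediate string has a well-defined ``side'': either it will eventually produce only $\Sigma_1$-letters or only $\Sigma_2$-letters. After refining $C$ into a disjoint union $C_L \sqcup C_R$ that records the side intrinsically in each letter (a standard ET0L alphabet-splitting argument), I would assume the side is determined by the letter.

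Next, I would build a new extended alphabet $C' = \{[c,d] : c \in C_L,\, d \in C_R\}$ of paired symbols with a fresh seed $[c_0^L, c_0^R]$. The intended meaning is that each symbol $[c,d]$ runs two parallel derivations in lockstep, the $C_L$-coordinate producing the eventual $\Sigma_1$-prefix and the $C_R$-coordinate producing the $\Sigma_2$-suffix. For every $t \in H$ and every ordered pair of rule choices $(c,u), (d,v) \in t$, I would introduce a single deterministic table mapping $[c,d]$ to a product of pairs $[c_i, d_j]$ via a fixed alignment of the letters of $u$ with those of $v$. The rational control is obtained from $\mathcal A$ by replacing each edge-label $t$ with the family of deterministic tables arising from it, and a final deterministic homomorphism $[a,b] \mapsto ab$ on terminals recovers the word $wf(w)$. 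Bijectivity of $f$ is used crucially at this stage: a completed left-coordinate derivation produces some $w \in K_1$, and this forces the right coordinate to produce exactly $f(w)$, so no spurious pairings survive and the generated language is precisely $K$.

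The main obstacle is the alignment issue when $|u| \neq |v|$: the pairing of $u$ and $v$ into symbols of $C'$ is not canonical, and one must ensure the two coordinates terminate simultaneously across the whole derivation tree. I would handle this by first putting $\mathcal G$ into a normal form where each rule has bounded (in particular, length-$\leq 2$) output, then introducing a distinguished blank symbol that pads the shorter side of each pairing and is erased by the terminal homomorphism. Once $K$ is EDT0L, the deterministic projection homomorphisms yield EDT0L systems for $K_1$ and $K_2$ and complete the proof.
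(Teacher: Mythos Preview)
The paper does not prove this lemma; it is quoted directly from Ehrenfeucht--Rozenberg. The only hint the paper offers about the original argument appears in the proof of \cref{lem:copyME}, which notes that the E--R proof replaces each nondeterministic table by finitely many deterministic ones over an alphabet whose symbols carry superscripts $(1),(2),(m),(m{:}1),(m{:}2)$.

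Your proposal has a genuine gap at the core step. Replacing a nondeterministic table $t$ by a family of deterministic tables---each making one fixed choice of rule per paired letter $[c,d]$---forces every occurrence of $[c,d]$ in a sentential form to use the same rule. But in the original ET0L derivation of $wf(w)$, two occurrences of the same symbol may well use different rules of $t$ at the same step; you give no argument that an alternative derivation avoiding this always exists. Your appeal to bijectivity of $f$ (``a completed left-coordinate derivation produces some $w$, and this forces the right coordinate to produce exactly $f(w)$'') is a soundness claim---nothing outside $K$ is generated---whereas the missing piece is completeness: that every word of $K$ is still reachable after determinisation. This is precisely where ET0L and EDT0L differ, and it is the heart of the lemma. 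There are also secondary problems: the lockstep pairing of $C_L$ with $C_R$ presupposes that the left and right sub-derivations can be aligned symbol-for-symbol, but their sentential-form lengths can diverge unboundedly over the course of a derivation, and padding a single rule application does not control this; and the terminal map $[a,b]\mapsto ab$ applied to $[a_1,b_1]\cdots[a_n,b_n]$ yields the interleaving $a_1b_1\cdots a_nb_n$, not a word of the shape $wf(w)$.

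The E--R argument instead keeps the original derivation intact and annotates each symbol by whether its eventual terminal yield lies entirely in $\Sigma_1$, entirely in $\Sigma_2$, or straddles the boundary. The key structural fact is that at each step there is at most one ``middle'' symbol, so its rule choice can be absorbed into the choice of deterministic table. How bijectivity of $f$ is then exploited to determinise the remaining side symbols without losing words is the real content of the E--R proof and is not reproduced in the present paper; you would need to consult the original for the four steps it refers to.
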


\begin{lemma}[Copying in \NSPACE]\label{lem:copyME} 
Let 
  $s\colon \N\to\N$ be a function. Let $\Sigma_1,\Sigma_2$ be two finite disjoint alphabets, and $f\colon\Sigma_1\to \Sigma_2$ a  bijection which extends to a monoid homomorphism $f\colon\Sigma_1^*\to \Sigma_2^*$ of the same name.
Suppose that on input $\Omega$ languages $(K_\Omega)_1\subseteq \Sigma_1^*$ and  
\[K_\Omega=\{wf(w)\mid w\in (K_\Omega)_1\}\] are produced.
If $K_\Omega$ is ET0L in \NSPACE$(s(n))$, then $K_\Omega, (K_\Omega)_1$ (and $f((K_\Omega)_1)$) are each EDT0L in \NSPACE$(s(n))$.

\end{lemma}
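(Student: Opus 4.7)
The plan is to strengthen the Copying Lemma (\cref{lem:copyER}) with a space-complexity bound by inspecting the original construction of Ehrenfeucht and Rozenberg \cite{EhrenRozenInverseHomomEDT0L}. That construction converts an ET0L grammar for $K_\Omega = \{wf(w) : w \in (K_\Omega)_1\}$ into an EDT0L grammar for $K_\Omega$ of polynomial size. The core idea is that in any ET0L derivation yielding $wf(w)$, the two halves are produced synchronously, so one may pair each nondeterministic rule choice made on a ``$\Sigma_1$-side descendant'' with the corresponding $f$-image choice on its matched ``$\Sigma_2$-side descendant'', thereby removing the nondeterminism. This is implemented using a new extended alphabet built from pairs in $C \times C$ (where $C$ is the extended alphabet of the input ET0L system), a new seed word of bounded length, and a modified NFA whose states track (state of $\mathcal{M}$, bounded rule-choice data) and whose edges carry deterministic tables.

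The first step would be to verify that this construction is \emph{local}: each state, each edge, each alphabet letter, and each rule of the new EDT0L grammar is determined by a bounded amount of data read from the input ET0L specification, plus a constant-size nondeterministic choice. This locality is the key point enabling the space-complexity translation. Because the NSPACE model permits output whose total length may exceed $s(n)$ provided each individual write uses only $O(s(n))$ work-tape space, the transformed EDT0L grammar is then printable in NSPACE$(s(n))$ whenever the input ET0L grammar is. A minor technical subtlety is that when pairing rules $(c, u_1), (c, u_2)$ of the original nondeterministic tables whose right-hand sides differ in length, one needs a padding symbol (eventually erased by a final homomorphism) or an analogous device so that the paired rule is a well-defined deterministic rule on a single pair letter; this is handled within the new extended alphabet without increasing the space bound.

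Second, the claims for $(K_\Omega)_1$ and $f((K_\Omega)_1)$ would follow from \cref{prop:closureET0L}(2) (closure under homomorphism): the erasing homomorphisms $\psi_1 \colon (\Sigma_1 \cup \Sigma_2)^* \to \Sigma_1^*$ and $\psi_2 \colon (\Sigma_1 \cup \Sigma_2)^* \to \Sigma_2^*$ are of fixed size (the alphabets $\Sigma_1, \Sigma_2$ are fixed, independent of $\Omega$), and satisfy $\psi_1(K_\Omega) = (K_\Omega)_1$ and $\psi_2(K_\Omega) = f((K_\Omega)_1)$. Since EDT0L in NSPACE$(s(n))$ is closed under homomorphism by that proposition, both languages lie in this class.

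The main obstacle is to nail down the Ehrenfeucht-Rozenberg construction precisely---its original presentation is purely language-theoretic and does not track complexity---and confirm, step by step, that no part of it requires a nonlocal computation on the input grammar. A secondary concern is ensuring that the pairing of rule choices remains faithful to the $wf(w)$ structure, so that the new grammar generates exactly $K_\Omega$ and nothing more.
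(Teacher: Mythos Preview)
Your proposal is correct and follows essentially the same approach as the paper: both proofs reduce to inspecting the Ehrenfeucht--Rozenberg construction and observing that it replaces each nondeterministic table by a bounded number (independent of $\Omega$) of deterministic tables over an extended alphabet obtained by decorating $C$ with finitely many markers, so the specification can be printed in the same space bound. The paper's proof is terser---it simply cites Steps~1--4 of \cite{EhrenRozenInverseHomomEDT0L} and notes the decorated alphabet uses superscripts $(1),(2),(m),(m{:}1),(m{:}2)$ rather than the $C\times C$ pairing you sketch---and it obtains the EDT0L grammars for $(K_\Omega)_1$ and $f((K_\Omega)_1)$ directly from the Ehrenfeucht--Rozenberg argument rather than via a subsequent application of homomorphism closure, but these are cosmetic differences.
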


\begin{proof} 
Following Steps 1--4 in the proof  of  \cite[Theorem 1]{EhrenRozenInverseHomomEDT0L}, 
 we see that each nondeterministic table in the grammar for $K$ is replaced by a finite number (independent of $\Omega$) of deterministic tables, with symbols superscripted by $(1),(2),(m)$, $(m:1)$, and $(m:2)$, where the letter $m$ stands for ``middle" (not an integer). It follows that all tables  for the EDT0L grammars for $K_\Omega, (K_\Omega)_1$ and $f((K_\Omega)_1)$ can be  printed  in $\Oh(s(n))$ space. 
\end{proof}

We will also make use of the following  fact. 

\begin{lemma}[Doubling  in \NSPACE]\label{lem:doubleME} 
Let 
  $s\colon \N\to\N$ be a function. Let $\Sigma_1,\Sigma_2$ be two finite disjoint alphabets, and $f\colon\Sigma_1\to \Sigma_2$ a  bijection which extends to a monoid homomorphism $f\colon\Sigma_1^*\to \Sigma_2^*$ of the same name.
If $L_\Omega\subseteq \Sigma_1^*$ is EDT0L in \NSPACE$(s(n))$ then $(L_\Omega)_D=\{wf(w)\mid w\in L_\Omega\}$ is EDT0L in  \NSPACE$(s(n))$.
\end{lemma}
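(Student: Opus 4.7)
The plan is to construct an EDT0L specification for $(L_\Omega)_D$ by running, in parallel, two synchronised copies of the given EDT0L derivation for $L_\Omega$: the first copy evolves in the original extended alphabet $C$ producing a word $w\in\Sigma_1^*$, and the second copy evolves in a disjoint ``hat'' copy of $C$ so as to produce $f(w)$ in lockstep. Because the tables remain deterministic and the $f$-translation is hard-wired into the table rules, the result will automatically be EDT0L rather than merely ET0L, so no appeal to the Copying Lemma (\cref{lem:copyME}) will be required.

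Concretely, suppose $L_\Omega$ is given by the $4$-tuple $(C,\Sigma_1,c_0,\mathcal M)$ producible in $\NSPACE(s(n))$. I introduce a disjoint copy $\widehat C=\{\hat c\mid c\in C\}$ together with the identification $\hat a := f(a)$ for each $a\in\Sigma_1\subseteq C$; in particular $\widehat{\Sigma_1}=\Sigma_2$. Take as new extended alphabet $C'=C\cup\widehat C$ and as new terminal alphabet $\Sigma_1\cup\Sigma_2$. For a word $v=b_1\cdots b_k\in C^*$ write $\hat v := \hat b_1\cdots \hat b_k$. Take as seed $c_0\hat c_0$, and as NFA $\mathcal M'$ the NFA $\mathcal M$ in which every edge label $h$ is replaced by the deterministic table
\[h' \;=\; h \;\cup\; \{(\hat c,\hat v)\mid (c,v)\in h\}.\]
The specification of $\mathcal M'$ is produced in streaming fashion by running the algorithm that prints $\mathcal M$ and, whenever it prints a rule $(c,v)$, immediately printing $(\hat c,\hat v)$ as well. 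Since $|C'|=2|C|$ and each extra rule has length bounded by that of the original, this fits within $\NSPACE(s(n))$, while every extra rule added is deterministic by construction.

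For correctness, a straightforward induction on the number of applied tables shows that if $c_0\to^{r} u$ in the original system then $c_0\hat c_0\to^{r'} u\hat u$ in the new one, where $r'$ is obtained from $r$ by relabelling each $h$ with $h'$. By the definition of EDT0L language, the output is retained precisely when $u\hat u\in(\Sigma_1\cup\Sigma_2)^*$; this happens iff $u\in\Sigma_1^*$, iff $u\in L_\Omega$, and in that case $\hat u = f(u)$ by construction. Hence the language generated is exactly $\{uf(u)\mid u\in L_\Omega\}=(L_\Omega)_D$, and determinism is preserved throughout.

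The only point requiring any care is the identification $\hat a=f(a)$ for $a\in\Sigma_1$: this is what ensures the derived word lies directly over $\Sigma_1\cup\Sigma_2$ without needing an extra cleanup table that would have to distinguish hatted terminals from hatted non-terminals. Once this is set up correctly, the rest is bookkeeping, and there is no real obstacle.
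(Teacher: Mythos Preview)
Your proof is correct and follows essentially the same approach as the paper: both replace the seed $c_0$ by $c_0\hat c_0$ and extend each table rule $(c,v)$ by adding its hatted copy $(\hat c,\hat v)$, noting that determinism is preserved because the two alphabets are disjoint. Your version is simply more careful than the paper's (which writes $c_0f(c_0)$ and $(f(a),f(u))$ without explicitly saying that $f$ must first be extended from $\Sigma_1$ to all of $C$), and you additionally supply the correctness induction that the paper omits.
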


 \begin{proof}
 Modify the grammar for $L_\Omega$ as follows.
Replace the seed word  $c_0$ by $c_0f(c_0)$, and for each rule $(a,u)$ in a table, add  $(f(a), f(u))$ to the table. These modifications are clearly in the same space bound and tables remain deterministic since alphabets are disjoint.
 \end{proof}
 
 Note that if the language $L_\Omega$  in \cref{lem:doubleME}  were ET0L  and not EDT0L, then the proof no longer works, since tables in the ET0L grammar could make different substitutions to letters in the prefix than in the suffix. For this reason\footnote{and more obviously, because we know EDT0L is a proper subclass of ET0L \cite{MR413615Copying}} we cannot simply double any ET0L language and then apply \cref{lem:copyER} to prove that it is  EDT0L. 
 We remark that  \cite[Theorem 3.3]{MR413615Copying} is a slightly stronger statement of copying, but we don't use this here.  
 
Here is our  key technical result for languages of words over hyperbolic groups.
 It will show how we can  build, from a given covering solution set of quasigeodesic words which is ET0L, an ET0L language consisting of all words in some regular language $\langT$ which correspond to one of the covering solutions, without increasing the amount of space required. In the case that $\langT$ is  
 a set of normal forms for the group (unique representative for each element), the Copying Lemma (\cref{lem:copyME}) ensures that the resulting language is in fact EDT0L.

 First we fix some notation to be used throughout the paper.

\begin{notation}\label{notation:doubleGset}
Let $G$ be a hyperbolic group with finite symmetric  generating set $S$ and natural projection map $\pi:S^*\to G$. 
  Let $S_\dag=\{x_\dag\mid x\in S\}$ be a disjoint copy of the alphabet $S$ where every letter is marked with a subscript $\dag$, and define a bijective function $f\colon S\to S_\dag$ by $f(x)=x_\dag$, which we can extend to the free monoid homomorphism $f\colon S^*\to S_\dag^*$. 
Then $\pi_\dag=\pi\circ f^{-1}$  is a map from $S_\dag^*$ to $G$, and we can formally consider $S_\dag$ as a generating set for $G$ with projection map $\pi_\dag$.
\end{notation}

\begin{proposition}[Covering  to full  sets]\label{prop:shortlex-hyp}
Let $G$ be a hyperbolic group with finite symmetric  generating set $S$. For $\#$ a symbol not in $S$, extend the bijection $f\colon S\to S_\dag$ defined in \cref{notation:doubleGset} to include  $f(\#)=\#$.

Let  $\lambda, \lambda',\mu,\mu'\in \mathbb R$ be given fixed constants with $\lambda, \lambda'\geq 1$, $\mu,\mu'\geq 0$ and $\lambda'\in\mathbb Q$. Let  $\mathcal T\subseteq Q_{G,S,\lambda', \mu'}$ be a fixed regular language of $(G,S,\lambda', \mu')$-quasigeodesics, and $r$ be a fixed positive integer.

Suppose some language \[L_{\text{\em cover}}\subseteq \{u_1\#\dots \# u_rf(v_1\#\dots \# v_r)\mid u_i,v_i\in Q_{G,S,\lambda, \mu},  u_i=_Gv_i, 1\leq i\leq r\}\]
  is ET0L in $\NSPACE(s(n))$.  Then
\begin{enumerate}
     \item\label{item:surjectSet} 
     if  the projection $\pi\colon \mathcal T\to G$ is a surjection,
then
  \[L_{\mathcal T}=\{w_1\#\dots \#w_r \mid \exists u_1\#\dots\#u_r f(v_1\#\dots\#v_r) \in L_{\text{\em cover}}, w_i=_Gu_i, w_i\in \mathcal T \}\]    is ET0L  in $\NSPACE(s(n))$.
  \item \label{item:shortlex}
  if the projection $\pi\colon \mathcal T\to G$ is a bijection,
then 
 \[L_{\mathcal T}=\{w_1\#\dots \#w_r \mid \exists u_1\#\dots\#u_r  f(v_1\#\dots\#v_r) \in L_{\text{\em cover}}, w_i=_Gu_i, w_i\in \mathcal T \}\]    is EDT0L  in $\NSPACE(s(n))$.
\end{enumerate}
  
  Moreover, in both cases  $L_{\mathcal T}$  is finite (resp. empty) if and only if $L_{\text{\em cover}}$ is finite (resp. empty).
  \end{proposition}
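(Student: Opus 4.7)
The plan is to realize each desired language as the image of $L_{\text{cover}}$ under a constant-size rational transduction, then use the closure of ET0L under rational transductions (which follows via Nivat's theorem from items~(2)--(4) of \cref{prop:closureET0L}) to preserve both the ET0L property and the $\NSPACE(s(n))$ bound. The workhorse is the asynchronous 2-tape automaton of \cref{prop:reg-hyp}, which recognises pairs $(v,w)$ of quasigeodesics with $v=_G w$; since $r$, $\lambda, \lambda', \mu, \mu'$ and $\mathcal T$ are all fixed, this automaton together with auxiliary NFAs for $\mathcal T$, $Q_{G,S,\lambda,\mu}$, and for sequencing the $r$ factors has constant size, so the Nivat decomposition $\psi \circ (\cdot \cap R) \circ \varphi^{-1}$ has all components of constant size.

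For part~(1), build a 2-tape transducer $\mathcal B_1$ of constant size implementing the rational transduction $T_1$ that, on input $u_1\#\cdots\#u_r f(v_1)\#\cdots\#f(v_r)$, emits nothing while reading the $u$-prefix (detecting the $u_r / f(v_1)$ boundary by the alphabet change $S \to S_\dag$), and then for each $i$ runs the automaton of \cref{prop:reg-hyp} on $v_i$ to non-deterministically emit some $w_i \in \mathcal T$ with $w_i =_G v_i$, inserting $\#$ between consecutive outputs. Surjectivity of $\pi|_{\mathcal T}$ guarantees such $w_i$ exists for every $v_i$. Then $T_1(L_{\text{cover}}) = L_{\mathcal T}$, and applying \cref{prop:closureET0L}(2)--(4) to the Nivat decomposition shows $L_{\mathcal T}$ is ET0L in $\NSPACE(s(n))$.

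For part~(2), bijectivity of $\pi|_{\mathcal T}$ means there is a unique $\nu(v) \in \mathcal T$ with $\nu(v) =_G v$ for each $v \in Q_{G,S,\lambda,\mu}$. Set $\Sigma_1 = S \cup \{\#\}$ and let $\Sigma_2 = S_\dag \cup \{\#_\dag\}$ be a disjoint alphabet with bijection $\tilde f\colon\Sigma_1\to\Sigma_2$ sending $s\mapsto s_\dag$, $\#\mapsto\#_\dag$. Build a 2-tape transducer $\mathcal B_2$ of constant size implementing the rational transduction $T_2$ which, on input from $L_{\text{cover}}$, emits
\[
\nu(u_1)\#\cdots\#\nu(u_r)\,\tilde f(\nu(v_1))\,\#_\dag\cdots\#_\dag\,\tilde f(\nu(v_r))
\]
by running the 2-tape automaton of \cref{prop:reg-hyp} independently on each $u_i$ and on each $v_i$, writing the chosen $\mathcal T$-representative directly on the first pass and post-composed with $\tilde f$ on the second, and emitting $\#$ or $\#_\dag$ between outputs according to the current phase. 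Because $u_i =_G v_i$ and $\nu$ is single-valued, the two passes are forced to choose the same word, so $T_2$'s output is exactly $y\cdot\tilde f(y)$ with $y = \nu(u_1)\#\cdots\#\nu(u_r) \in L_{\mathcal T}$. Thus $T_2(L_{\text{cover}}) = \{y\,\tilde f(y) \mid y \in L_{\mathcal T}\}$ is ET0L in $\NSPACE(s(n))$, and \cref{lem:copyME} converts this to $L_{\mathcal T}$ being EDT0L in $\NSPACE(s(n))$.

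Emptiness transfers in both directions by surjectivity of $\pi|_{\mathcal T}$; finiteness transfers because the quasigeodesic bounds $|w|\leq\lambda'\Abs{\pi(w)}_S+\mu'$ (and the analogous bound on $Q_{G,S,\lambda,\mu}$) imply each $g \in G$ has only finitely many representatives on either side, so finiteness of $L_{\text{cover}}$ bounds the set of $G$-tuples arising, which bounds $L_{\mathcal T}$, and conversely. The main technical subtlety, and the reason bijectivity is essential in part~(2), is the \emph{synchronisation} of the two independent non-deterministic passes of $T_2$: without single-valuedness of $\nu$ one would merely get a language of decorrelated halves rather than the form $\{y\,\tilde f(y)\}$ required by the Copying Lemma, and the EDT0L conclusion would fail.
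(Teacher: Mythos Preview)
Your proof is correct and follows essentially the same approach as the paper: both arguments realise the passage from $L_{\text{cover}}$ to $L_{\mathcal T}$ as a constant-size rational transduction built from the asynchronous 2-tape equality automaton of \cref{prop:reg-hyp}, decomposed (explicitly in the paper, via Nivat in your write-up) as inverse homomorphism, intersection with a regular language, and homomorphism, and both then invoke the Copying Lemma (\cref{lem:copyME}) for part~(2) using the bijectivity of $\pi|_{\mathcal T}$ to force the two halves to agree. The only cosmetic difference is that the paper processes both halves of $L_{\text{cover}}$ through the transduction even for part~(1), whereas you discard the $u$-half and work only with the $v$-half there; either is fine.
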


Note the requirement that $\lambda'\in \mathbb Q$, this is because  we make use of the fact that the set of all \qgeods\ in a hyperbolic group gives an asynchronous automatic structure which only holds when the constant $\lambda'$ is rational by \cite{HoltRees}. The value of $\lambda'$ is independent of $\lambda$ (can be larger or smaller).

\begin{proof} Let $\Sigma_1=S\cup\{\#\}$, and let $\Sigma_2=S_\dag\cup\{\#_\dag\}$ be a disjoint copy of $\Sigma_1$. 
Choose $\lambda_*,\mu_*\geq 0$ so that $\lambda_*\in \mathbb Q$, $\lambda_*\geq \max\{\lambda,\lambda'\}$ and  $\mu_*\geq \max\{\mu,\mu'\}$. Recall that 
$\mathcal T_{\#r}=\{u_1\#\dots \#u_r\mid u_i\in \mathcal T\}$.
Observe that if $\langT$ is regular and of fixed constant size, then so is $\langT_{\#r}$: take $r$ disjoint copies of the automaton accepting $\langT$ and join  by $\#$ transitions each accept state of the $i$th copy to the start state of the $(i+1)$st copy.

Let $\$$ denote a `padding symbol' that is distinct from $\Sigma_1\cup \Sigma_2$, and let 
 \[\mathcal P=\left\{{s\choose t},  {s\choose \$}, {\$\choose s}, {\#\choose \#}, {s_\dag\choose t_\dag},{s_\dag\choose \$}, {\$\choose s_\dag},   {\#_\dag\choose \#_\dag}  \ \middle | \ s,t\in S, s_\dag,t_\dag\in \Sigma_2 \right\}.\]
 Define a homomorphism $\psi\colon \mathcal P^*\to (\Sigma_1\cup \Sigma_2)^*$ by 
 \[ \psi\left({x\choose y}\right)=\left\{\begin{array}{llll} 1 & \ \ & x=\$\\x & & x\in \Sigma_1\cup \Sigma_2\end{array}\right.\] 
 Then $\psi^{-1}(L_{\textrm{cover}})$ is ET0L in \NSPACE$(s(n))$ by \cref{prop:closureET0L}(4), and consists of strings of letters which we can view as two parallel strings: the top string is a word from $L_{\textrm{cover}}$ with $\$$ symbols inserted, and the bottom string can be any word in $\Sigma_1\cup \Sigma_2\cup\{\$\}$ with $\#,\#'$ occurring  in exactly the same positions as the top string, and  no two $\$$ symbols in the same position top and bottom.
 
 Let ${\mathcal M}$ be the asynchronous 2-tape automaton which accepts all pairs $(u,v)\in Q_{G,S,\lambda_*,\mu_*}$ 
 with $u=_Gv$ as guaranteed by \cref{prop:reg-hyp}. Construct a new automaton ${\mathcal M}_1$ by adding an edge  labeled ${\#\choose \#}$ from each accept state of $\mathcal M$ to the start state.
 The new automaton ${\mathcal M}_1$ accepts padded pairs of  $(G,S,\lambda_*, \mu_*)$-\qgeod\ words  $(u_i,v_i)$ with  $u_i=_Gv_i$,  and each pair is separated by ${\#\choose \#}$. 
 Make a copy ${\mathcal M}_2$ of ${\mathcal M}_1$ where each letter from $\Sigma_1$ is replaced by its corresponding marked letter from $\Sigma_2$.
  The  automaton ${\mathcal M}_2$ accepts padded pairs of  $(\Gamma(G,S_\dag),\lambda_*, \mu_*)$-\qgeod\ words  $(u_i,v_i)$ with  $u_i=_Gv_i$,  and each pair is separated by ${\#_\dag\choose \#_\dag}$. 
  
  Make a new automaton $\mathcal M_3$ by attaching  edges labeled $\varepsilon$ from the accept states of $\mathcal M_1$ to the start state of $\mathcal M_2$.
  Note that the size of  $\mathcal M_3$ is constant.

 We now take $L_1=L(\mathcal M_3)\cap \psi^{-1}(L_{\textrm{cover}})$. This is again ET0L in \NSPACE$(s(n))$ by \cref{prop:closureET0L}(3). The language $L_1$ can be seen as pairs of strings, the top string a padded version of a string in $L_{\textrm{cover}}$  which has the form 
 \[w_1\#\dots \#w_rf(z_1)\#_\dag\dots \#_\dag f(z_r)\] and the bottom of the form \[u_1\#\dots \#u_r f(v_1)\#_\dag \dots \#_\dag f(v_r)\] with $w_i=_Gu_i=_Gv_i$ and $u_i,v_i$ 
 $(G,S,\lambda_*,\mu_*)$-\qgeods\ for all $1\leq i\leq r$.

 Define a homomorphism $\xi\colon L_1  \to (\Sigma_1\cup \Sigma_2)^*$ by 
 \[ \xi\left({x\choose y}\right)=\left\{\begin{array}{llll} 1 & \ \ & y=\$\\y & & y\in \Sigma_1\cup \Sigma_2\end{array}\right.\]  
Then 
by \cref{prop:closureET0L}(2),  $\xi(L_1)$  is ET0L in \NSPACE$(s(n))$, and consists of all possible strings of the form  \[u_1\#\dots \#u_r f(v_1)\#_\dag \dots \#_\dag f(v_r)=u_1\#\dots u_rf(v_1\#\dots \#v_r)\] with  $u_i,v_i$ 
 $G,S,\lambda,\mu)$-\qgeods, $u_i=_Gv_i$, and such that there exists some $w_1\#\dots \#w_rf(z_1\#\dots\#z_r)\in L$ with $w_i=_Gu_i$.

Finally,   $L_{2}=\xi(L_1)\cap \mathcal T_{\#r}$ is  ET0L in \NSPACE$(s(n))$ since $\mathcal T_{\#r}$ is regular and constant size, and  
since $\langT$ a subset of $Q_{G,S,\lambda',\mu'}\subseteq Q_{G,S,\lambda_*,\mu_*}$ since $\lambda'\leq \lambda_*$ and $\mu'\leq \mu_*$, $L_2$ consists of all words of the form \[u_1\#\dots u_rf(v_1\#\dots \#v_r)\] for all possible $u_i,v_i\in\langT$ where $u_i=_G v_i$ such that there exists some \[w_1\#\dots \#w_r f(z_1\#\dots \# z_r)\in L\] with $w_i=_Gu_i$. Apply the homomorphism $\chi\colon(\Sigma_1\cup\Sigma_2)^*\to \Sigma_1^*$ 
defined by  \[ \chi(x) =\left\{\begin{array}{llll} 1 & \ \ & x\in\Sigma_2\\x & & x\in \Sigma_1\end{array}\right.\]  
to obtain $\chi(L_2)=L_{\mathcal T}$ is ET0L in \NSPACE$(s(n))$.
This establishes the first item.

If  $\mathcal T$  is in bijection with $G$, we have $u_i$ and $v_i$ are identical words, thus $L_{\mathcal T}$ consists of  words of the form $u_1\#\dots \#u_r f(u_1\#\dots \#u_r)$. Thus by  \cref{lem:copyME}, $L_{\mathcal T}$  is EDT0L in 
 \NSPACE$(s(n))$. This establishes the second item.
 
 For the final result, let \[L_G=\left\{(g_1,\dots ,g_r) \ \middle | \  \exists w_1\#\dots \#w_rf(z_1\#\dots \#z_r)\in L [ w_i=_Gg_i]\right\}.\] Then since for each group element there are a finite number of $(G,S,\lambda,\mu)$-\qgeod\ words representing it, $L_{\text{cover}}$ is finite  (resp. empty) if and only if $L_G$ is finite  (resp. empty), and since 
 for each group element there are a finite number of $(G,S,\lambda',\mu')$-\qgeod\ words representing it,
$L_G$ is finite  (resp. empty)  if and only if $L_{\langT}$ is finite (resp. empty).
\end{proof}

\subsection{Alternative generating sets for free and virtually free groups}

Previous work \cite{CDE, DEijac} expresses the full set of (group element) solutions to systems in terms of specific normal forms:  freely reduced words in free groups or the standard normal forms of  \cite[Definition 14.1]{DEijac} for virtually free groups. However, we can express the solutions more generally, in terms of arbitrary languages of \qgeods, as E(D)T0L in the same space complexity as in  \cite{CDE, DEijac}, by using \cref{prop:shortlex-hyp}.

We first observe that the standard normal forms used  in \cite{DEijac} are  \qgeods.
\begin{proposition}[Standard normal forms are \qgeods]\label{std-DE-quasigeod}
Let $V$ be a virtually free group, $F$ a free normal subgroup of $V$, and $H$ a finite quotient that satisfy the exact short sequence
\[1 \rightarrow F \rightarrow V \rightarrow H \rightarrow 1.\]
Consider a symmetric generating set $Y = A \cup (H \setminus \{1_H\})$ for $V$, where $A$ is a free basis generating set for $F$.

The set $\{uh \mid u \in A^* \text{ freely reduced over } A, h \in H \setminus 1_H\}$ is a set of unique $(V,Y,\lambda_Y, \mu_Y)$-\qgeod\ representatives for $V$ for some $\lambda_Y \geq 1, \mu_Y\geq 0$. 
\end{proposition}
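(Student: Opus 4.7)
The plan is to establish uniqueness by a standard coset argument and then prove the quasigeodesic bound by comparing the $Y$-metric on $V$ with the $A$-metric on $F$ via a linear-cost ``pushing'' procedure.

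First I will fix a set-theoretic section $\iota \colon H \to V$ of the quotient $V \twoheadrightarrow H$ with $\iota(1_H) = 1_V$, and identify the nonidentity images with the generators in $H \setminus \{1_H\} \subseteq Y$. Since $F$ is normal in $V$ of finite index, every $v \in V$ decomposes uniquely as $v = f \cdot \iota(h)$ with $f \in F$ and $h \in H$, and each such $f$ admits a unique freely reduced word $u \in A^*$. Reading $u$ alone when $h = 1_H$ (and writing $uh$ otherwise), this gives the asserted unique normal form.

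For the quasigeodesic property, the upper bound $\|v\|_Y \leq |u| + 1$ is immediate since $uh$ is already a word over $Y$ representing $v$. For the lower bound, I will set
\[
K \;=\; \max\Bigl\{\|\iota(h)\, a\, \iota(h)^{-1}\|_A,\ \|\iota(h)\iota(h')\iota(hh')^{-1}\|_A \ \Bigm|\ h,h' \in H,\ a \in A\Bigr\},
\]
which is finite because $H$ and $A$ are finite and $F$ is normal. Given any word $w = y_1 \cdots y_n \in Y^*$ with $w =_V v$, I process its letters left to right while maintaining a state $(u', h') \in A^* \times H$ satisfying $u' \iota(h') =_V y_1 \cdots y_i$: when $y_{i+1} \in A$, rewrite $\iota(h')\, y_{i+1}$ as $(\iota(h') y_{i+1} \iota(h')^{-1})\,\iota(h')$; when $y_{i+1} \in H \setminus \{1_H\}$, rewrite $\iota(h')\iota(y_{i+1})$ as $\bigl(\iota(h')\iota(y_{i+1})\iota(h' y_{i+1})^{-1}\bigr)\,\iota(h' y_{i+1})$. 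Each step appends at most $K$ letters to $u'$, so the terminal state $(u^*, h)$ has $|u^*| \leq Kn$ with $u^* =_F f$. Since $u$ is the unique freely reduced word for $f$, we get $|u| \leq |u^*| \leq Kn$, and minimising over $w$ yields $\|v\|_Y \geq |u|/K$, hence $|uh| \leq K \|v\|_Y + 1$.

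Finally I will verify the subpath condition: any subpath of the path labelled $uh$ has endpoints differing by an element of the form $u_2$ or $u_2\,\iota(h)$, where $u_2$ is a subword of $u$ and therefore automatically freely reduced. The identical argument applies to this shorter element, giving the uniform constants $\lambda_Y = K$ and $\mu_Y = 1$ for every subpath. The main technical subtlety is ensuring the ``pushing'' procedure grows $u'$ only \emph{additively} (by at most $K$ per letter processed) rather than \emph{multiplicatively}; this is exactly what the uniform bound $K$ guarantees, and it avoids the exponential blow-up one might naively fear from iterated conjugation by elements of $H$.
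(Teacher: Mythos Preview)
Your proof is correct and follows essentially the same ``push the $H$-letters to the right'' strategy as the paper. You are in fact more careful on two points the paper glosses over: you include the cocycle term $\|\iota(h)\iota(h')\iota(hh')^{-1}\|_A$ in your constant $K$ (the paper's bound $m=\max|u_{a,h}|$ tacitly treats the products $h_0h_1\cdots h_i$ as if they remained in the section image, which is only literally true when the extension splits), and you explicitly verify the subpath condition required by the definition of quasigeodesic, which the paper omits.
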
\begin{proof} 
Since $F$ is normal in $V$, for each $a\in A, h\in H$ we can write $hah^{-1}=u_{a,h}$ where $u_{a,h}\in A^*$ is some fixed choice of word. Let $m=\max\{|u_{a,h}|\mid a\in A, h\in H\}$ and $h_i'=_V h_0h_1\dots h_i$.

Suppose $g\in V$ has standard normal form $uh$, and suppose $v=h_0 a_1 h_1 \cdots a_k h_k$ is a geodesic for $g$, 
where  $a_i\in A$ and $h_i\in H\cup\{1_H\}$. We have $ |v|_Y \geq k$. 
Now push the $h_i$ to the right: \[h_0a_1h_1a_2\dots h_k=_V u_{a_1,h_0}h_0h_1a_2\dots h_k=_V u_{a_1,h_0}h_1'a_2\dots h_k= \] \[u_{a_1,h_0}u_{a_2,h_1'}h_2'\dots h_k= \dots =_V u'h, \] where $|u'h|_Y \leq mk+1 \leq m |v|_Y +1=m\Abs{g}_Y +1$. Since $ |u|_Y \leq |u'|_Y$ we get that $\Abs{g}_Y \leq |uh|_Y \leq m \Abs{g}_Y+1$, so $uh$ is \qgeod\ in $V$  for some constants $\lambda_Y \geq 1, \mu_Y\geq 0$. 
\end{proof}

Then \cref{prop:shortlex-hyp} implies the following, which extends previous results for free and virtually free groups by both allowing arbitrary generating sets $S$ instead of very specific ones, and by expressing the solutions in terms of more general quasigeodesics instead of particular normal forms.
\begin{restatable}[Free and virtually free with arbitrary generating sets]{corollary}{changeGsetCOR}
\label{cor:changeGset}
Let $G$ be a free [respectively virtually free] group with finite symmetric generating set $S$.   
Let $\langT\subseteq Q_{G,S,\lambda, \mu}$ be a regular language of $(G,S, \lambda,\mu)$-\qgeod\ words over $S$ surjecting to $G$ 
 for some fixed arbitrary values of
 $\lambda\geq 1, \mu\geq 0, \lambda\in\mathbb Q$.

Let $\Phi$ be a system of equations and inequations
of size $n$ with constant size rational constraints, as in \cref{defn:system}. 
Then \begin{enumerate}\item 
the full set of $\langT$-solutions
is ET0L, and the algorithm which on input $\Phi$ prints a description for the ET0L grammar runs in  $\NSPACE(n\log n)$ [resp. $\NSPACE(n^2\log n)$];
\item if  $\langT$ is in bijection with $G$, then
the full set of $\langT$-solutions
is EDT0L, and the algorithm which on input $\Phi$ prints a description for the EDT0L grammar runs in  $\NSPACE(n\log n)$ [resp. $\NSPACE(n^2\log n)$].
\end{enumerate}
\end{restatable}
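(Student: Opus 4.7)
The plan is to reduce to the existing theorems \ref{thmCDE} and \ref{thmDE}, which are stated for specific generating sets and specific normal forms, and then use \cref{prop:shortlex-hyp} to pass to the arbitrary \qgeod\ language $\langT$ over the arbitrary generating set $S$.

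First I would fix a ``standard'' symmetric generating set $S_0$ for $G$: a free basis in the free case, and a set of the form $Y=A\cup(H\setminus\{1_H\})$ as in \cref{std-DE-quasigeod} in the virtually free case. Since $S$ and $S_0$ are both fixed and of constant size, there is a constant size monoid morphism $\psi\colon S_0^*\to S^*$ (and a morphism in the other direction) compatible with the projections to $G$. I would translate the input $\Phi$ into an equivalent system $\Phi_0$ over $(G,S_0)$ by rewriting each constant in $S$ as a word over $S_0$; this only changes $|\Phi|_1$ by a constant factor, and each constant size rational constraint over $S$ is rewritten into a constant size NFA over $S_0$ by subdividing each transition. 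Thus $|\Phi_0|_1\in\Oh(n)$ and $\Phi_0$ still has constant size rational constraints.

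Applying \cref{thmCDE} (free case) or \cref{thmDE} (virtually free case) to $\Phi_0$, I obtain, in $\NSPACE(n\log n)$ (resp.\ $\NSPACE(n^2\log n)$), an EDT0L specification for the set of solution tuples written as freely reduced words over $A$ (resp.\ as standard normal forms over $Y$). By \cref{std-DE-quasigeod} (and trivially in the free case) each component of each tuple lies in $Q_{G,S_0,\lambda_0,\mu_0}$ for some constants $\lambda_0,\mu_0$ depending only on $S_0$. Pushing the extended alphabet letters through the monoid homomorphism $\psi\colon S_0^*\to S^*$ via \cref{prop:closureET0L}(2) yields an EDT0L specification, within the same space bound, of a language $L'$ of tuples $u_1\#\cdots\#u_r$ over $S$, where the $u_i$ represent all possible group element solutions. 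By \cref{change_of_gset}, each $u_i\in L'$ is a $(G,S,\lambda_1,\mu_1)$-\qgeod\ for some constants $\lambda_1,\mu_1$ depending only on $\lambda_0,\mu_0,S,S_0$.

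Next, I would apply \cref{lem:doubleME} with the bijection $f\colon S\cup\{\#\}\to S_\dag\cup\{\#\}$ of \cref{notation:doubleGset} to obtain, in the same space complexity, an EDT0L specification for
\[L_{\text{cover}}=\bigl\{w\,f(w)\mid w\in L'\bigr\}\subseteq\bigl\{u_1\#\cdots\#u_r f(v_1\#\cdots\#v_r)\mid u_i,v_i\in Q_{G,S,\lambda_1,\mu_1},\ u_i=v_i\bigr\}.\]
This is precisely the covering hypothesis of \cref{prop:shortlex-hyp}, with $\lambda=\lambda_1$ and $\mu=\mu_1$, and with $\lambda',\mu'$ being the constants provided for $\langT$ (note $\langT$ has constant size by hypothesis and surjects onto $G$). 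Invoking item (1) of \cref{prop:shortlex-hyp} gives the full set of $\langT$-solutions as an ET0L language in the same space bound, proving part (1); when in addition $\langT$ is in bijection with $G$, item (2) of \cref{prop:shortlex-hyp} upgrades this to an EDT0L specification, again in the same space bound, proving part (2).

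The only step that requires any real care is the translation of rational constraints in Step 1: one must check that replacing edge labels from $S$ by constant length words over $S_0$ preserves both constant size and the rationality of the image subset of $G$. Everything else is bookkeeping: the change of generating set via \cref{change_of_gset}, the application of the closure result \cref{prop:closureET0L}(2), the doubling via \cref{lem:doubleME}, and the invocation of \cref{prop:shortlex-hyp} all run within the stated space bounds because $S$, $S_0$, $\mathcal{T}$ and the constraint NFAs are of constant size.
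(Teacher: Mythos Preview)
Your proposal is correct and follows essentially the same route as the paper's proof: obtain the EDT0L solution language over the ``standard'' generating set via \cref{thmCDE}/\cref{thmDE}, push it through the monoid morphism to $S^*$ using \cref{prop:closureET0L}(2) and \cref{change_of_gset}, double via \cref{lem:doubleME}, and then invoke \cref{prop:shortlex-hyp}. The only difference is that you are more explicit than the paper about first translating $\Phi$ and its constraints from $S$ to $S_0$ (the paper simply applies \cref{thmCDE}/\cref{thmDE} to $\Phi$ without spelling this out), which is a harmless and arguably welcome clarification.
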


\begin{proof} Let $S_1$ be a free basis generating set for a free group $F$ and $R_1$ the set of all 
freely reduced words over $S_1\cup S_1^{-1}$.
We have $R_1$ are  $(G,S_1\cup S_1^{-1},1,0)$-\qgeods. By \cref{thmCDE} the set of all solutions to $\Phi$ 
as words in $R_1$ is EDT0L in $\NSPACE(n\log n)$. 
Applying the homomorphism which sends each generator to a word in $S^*$, we obtain (by \cref{prop:closureET0L}(2) and \cref{change_of_gset}) a covering solution of words over $S^*$ of $(G,S, \lambda', \mu')$-\qgeods\ for some $\lambda', \mu'$.
Next,
apply \cref{lem:doubleME} to obtain a doubled copy which is EDT0L also in  $\NSPACE(n\log n)$. We can now input this into   \cref{prop:shortlex-hyp} to obtain the result.

 For systems over virtually free groups, let $Y$ be the generating set used in   \cite{DEijac} and $R_1$ the set of standard normal forms over $Y$. By
  \cref{std-DE-quasigeod}  we have that standard normal forms over $Y$ are \qgeods.
   By \cref{thmDE} the set of all solutions to $\Phi$ 
as words in $R_1$ is EDT0L in $\NSPACE(n^2\log n)$. 
  Applying the homomorphism which sends each letter of $Y$ to a word in $S^*$, we obtain (by \cref{prop:closureET0L}(2) and \cref{change_of_gset}) a covering solution of words over $S^*$ of $(G,S, \lambda', \mu')$-\qgeods\ for some $\lambda', \mu'$.
Next,
apply \cref{lem:doubleME} to obtain a doubled copy which is EDT0L also in  $\NSPACE(n^2\log n)$. We can now input this into   \cref{prop:shortlex-hyp} to obtain the result.
 \end{proof}

\section{A key fact about  \Qier\ constraints and complexity}\label{sec:DG9.4issue}

In  this paper we will make use of the following result of \DG, which we restate here together  with a note about complexity.
\begin{proposition}[{see \cite[Proposition 9.4]{DG}}]\label{prop:DG9.4complexity}
Let $G$ be a hyperbolic group with finite symmetric generating set $S$, and let $\mathcal R\subseteq G$ be an effective \qier\ set (recall this means we are given an NFA $A$ such that there exist constants $\lambda_0\geq 1,\mu_0\geq 0$ with  $L(A)\subseteq Q_{G,S,\lambda_0,\mu_0}$ and $\pi(L(A))=\mathcal R$). 
Then for $\lambda\geq 1,\mu\geq 0$ fixed given constants,
 there is an algorithm  which computes an automaton $A'$ that accepts the language \[\widetilde{\mathcal R}=\pi^{-1}(\mathcal R)\cap Q_{G,S,\lambda,\mu}.\] 
 If the  set $\mathcal R$ is just effective \qier\ and not explicit, then 
in general there is no {computable}  bound on the number of steps that  the subroutine must make before it terminates. 
\end{proposition}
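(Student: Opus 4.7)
The plan is to combine the regularity of the \qgeod\ languages from \cref{prop:reg-hyp} with the asynchronous two-tape automaton recognising equality of \qgeods, and then to project onto one coordinate. First, fix rational constants $\lambda_*\geq \max\{\lambda,\lambda_0\}$ and $\mu_*\geq \max\{\mu,\mu_0\}$, so that both $L(A)$ and $Q_{G,S,\lambda,\mu}$ are contained in $Q_{G,S,\lambda_*,\mu_*}$. By \cref{prop:reg-hyp} there is an asynchronous 2-tape automaton $M$ accepting exactly the pairs $(u,v)\in Q_{G,S,\lambda_*,\mu_*}^2$ with $u=_G v$, and both $Q_{G,S,\lambda_*,\mu_*}$ and $Q_{G,S,\lambda,\mu}$ are themselves regular languages over $S$.

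Next, build a two-tape automaton $B$ whose first-tape language is $L(A)$ and which simultaneously runs $M$ on the pair of tapes. Then $B$ accepts $(u,v)$ iff $u\in L(A)$ and $u=_G v$, equivalently iff $v\in Q_{G,S,\lambda_*,\mu_*}$ and $\pi(v)\in\mathcal R$. Projecting $B$ onto its second coordinate, using padding symbols $\$$ and erasing them via a homomorphism in the manner of the proof of \cref{prop:shortlex-hyp}, produces a regular language consisting of all $v\in Q_{G,S,\lambda_*,\mu_*}$ with $\pi(v)\in\mathcal R$. Intersecting this with the regular language $Q_{G,S,\lambda,\mu}$ gives an NFA $A'$ whose language is exactly $\widetilde{\mathcal R}=\pi^{-1}(\mathcal R)\cap Q_{G,S,\lambda,\mu}$, as required.

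To handle the effective-but-not-explicit case, the construction above requires concrete numerical values of $\lambda_0,\mu_0$, which are assumed to exist but not given. One enumerates candidate pairs $(\lambda_0,\mu_0)\in\mathbb Q_{\geq 1}\times\mathbb Q_{\geq 0}$ in some computable order, and for each candidate builds the NFA for $Q_{G,S,\lambda_0,\mu_0}$ via \cref{prop:reg-hyp} and tests whether $L(A)\subseteq Q_{G,S,\lambda_0,\mu_0}$; this inclusion of regular languages is decidable. The \qier\ hypothesis guarantees that some admissible pair is eventually found, after which the construction of the previous paragraph goes through. The main obstacle --- and the origin of the negative clause --- is conceptual rather than technical: admissible magnitudes of $(\lambda_0,\mu_0)$ depend on how $L(A)$ sits geometrically inside $\Gamma(G,S)$ and cannot be extracted from the transition structure of $A$ alone, so there is no computable function of $|A|$ bounding when the search halts.
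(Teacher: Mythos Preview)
Your construction of $A'$ is correct and is essentially the approach taken in \cite{DG} (to which the paper defers): build the asynchronous equality automaton for $(\lambda_*,\mu_*)$-\qgeods, intersect on the first tape with $L(A)$, project to the second tape, and intersect with $Q_{G,S,\lambda,\mu}$. Your handling of the search for $(\lambda_0,\mu_0)$ in the effective-but-not-explicit case is also the same as the paper's.

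The gap is in the final negative clause. Saying that admissible $(\lambda_0,\mu_0)$ ``cannot be extracted from the transition structure of $A$ alone'' is an intuition, not an argument: to show that \emph{no} computable bound exists you must exhibit an obstruction, not merely observe that the obvious approach fails. The paper does this by a reduction to an undecidable problem. Namely, suppose some computable function $s$ bounded the running time (equivalently, space) of the procedure on all effective \qier\ inputs of a given size. Take any finitely generated subgroup $H\leq G$ given by generators $u_1,\dots,u_n\in S^*$, form its Stallings automaton $\mathcal M$, and feed $\mathcal M$ to the procedure. If $H$ is quasi-isometrically embedded then $L(\mathcal M)$ is \qier\ and the search halts within the bound $s(|\mathcal M|)$; if $H$ is not quasi-isometrically embedded then no valid $(\lambda_0,\mu_0)$ exists and the search runs past the bound. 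Thus a computable $s$ would decide quasi-isometric embeddedness of finitely generated subgroups of hyperbolic groups, which is undecidable by \cite{BWmalnormal}. You should replace your last paragraph with an argument of this shape.
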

\begin{proof}
The main statement is proved in full detail in \cite[Proposition 9.4]{DG}. We start by assuming we have an NFA $A$ such that there exist constants $\lambda_0\geq 1,\mu_0\geq 0$ with  $L(A)\subseteq Q_{G,S,\lambda_0,\mu_0}$ and $\pi(L(A))=\mathcal R.$ 
We don't assume we are given the $\lambda_0,\mu_0$ {\em a priori}, just that they exist.
A key step in their algorithm is to calculate some constants which can play the roles of $\lambda_0,\mu_0$, which is done by enumerating larger and larger values of constants $\lambda_1,\mu_1$ until a certain condition is satisfied. The proof of \cite[Proposition 9.4]{DG} gives no bound on how many steps or how much space this would take; all it needs is that the enumeration is guaranteed to terminate since we are promised that $L(A)\subseteq Q_{G,S,\lambda_0,\mu_0}$ for some  $\lambda_0,\mu_0$.
 The automaton $A'$ is then constructed using
constants $\lambda',\mu'$ which themselves depend on $\lambda_1,\mu_1$. The size of $A'$ is 
a
 function of the constants $\lambda',\mu'$ and $|A|$. 
 If $|A|$ is considered a considered in the context of a larger algorithm, then even though this function may be large (or even not bounded, as we discuss in the next paragraph), the complexity of this subroutine to compute $A'$ is considered to be constant.
 In the case of an explicit \qier\ set, where the $\lambda_0,\mu_0$ are also specified, and also considered constants in the context of a larger algorithm, then it is clear that the function taking inputs $|A|, \lambda_0,\mu_0$ is also constant. 
 In the case that $\lambda_0,\mu_0$ are not given, it may seem dishonest to claim the subroutine makes no contribution, but since this is a standard convention in complexity theory and it leads to stronger statements, we follow this convention.

Suppose hypothetically there was a computable function $s:\mathbb N\to\mathbb R$ such that on  input 
 $(G,\lambda,\mu,A)$, some algorithm was able to compute an automaton $A'$ accepting $\widetilde{\mathcal R}=\pi^{-1}(R)\cap Q_{G,S,\lambda,\mu}$. This leads to a contradiction: 
let $H$ be a finitely generated subgroup of $G$ with generators $u_1,\dots, u_n\in S^*$ and construct its Stallings graph, which is an automaton $\mathcal M$ with $\pi(L(\mathcal M))=H$ (by folding the bouquet of loops labeled by the $u_i$). Now {\em a priori} we do not know whether or not $H$ is  quasi-isometrically embedded. 
 Consider the system $\Phi$ consisting of a single equation $X=X$ and constraint $X\in L(\mathcal M)$. Run the hypothetical algorithm to solve $\Phi$ via constructing the automata accepting all $(G,S,\lambda,\mu)$-\qgeod\ words corresponding to the  constraint, and if it does not produce an answer within space $\Oh(s(n))$ (equivalently in time $2^{\Oh(s(n))}$ steps) then we know $H$ was not quasi-isometrically embedded, and if it does then $H$ is quasi-isometrically embedded and we have even found  the appropriate constants.
However, by \cite{BWmalnormal} the problem of deciding whether or not $H$ is quasi-isometrically embedded for an arbitrary hyperbolic group is undecidable, so there is no computable bound on the number of steps needed to determine whether $H$ is not quasi-isometrically embedded.
\end{proof}

In this paper we consider systems of equations and inequations with \qier\ constraints given effectively by an NFA for each variable, as in \cref{defn:system}. If we consider the sizes of the NFAs to be  constant, with the input size depending only on 
the size of the equations and inequations, then when running the algorithm in the previous proposition as a subroutine inside a larger algorithm, it is standard practice in complexity theory to consider the subroutine as requiring only constant space since its complexity depends solely on the sizes of the input NFAs. That is, even though {\em a priori} we cannot bound the number of steps that may be needed to compute $\lambda_0,\mu_0$, the subroutine only depends on items considered to be constant. 
For this reason, the statements of our main theorems refer to constant size effective \qier\ constraints, rather than just explicit.

\section{Equations and their solutions in torsion-free hyperbolic groups}\label{sec:torsionfree}  

In this section we prove \cref{thmTorsionFree}, which characterises the solutions of equations in torsion-free hyperbolic groups as EDT0L in \PSPACE. The first and main part of the algorithm for getting the solutions is \cref{prop:MAIN-torsionfree}, where by \cref{CylStab} we obtain a covering solution set of \qgeod\ words,  but not yet as shortlex or other representatives. 
\Cref{prop:shortlex-hyp}
then allows us get the solutions as shortlex representatives from the set produced in \cref{prop:MAIN-torsionfree}. 
The algorithm relies on the existence and properties of canonical representatives, explained below, which 
 guarantee that the solutions of any system in a torsion-free hyperbolic group with generating set $S$ can be found by solving an associated system in the free group on $S$. 
 
 \subsection{Canonical representatives} 

Canonical representatives of elements in torsion-free hyperbolic groups were defined by Rips and Sela in \cite{RS95}, and we refer the reader to \cite{RS95} for their construction, and to \cite[Appendix C]{CiobanuEicalp2019}
for a basic exposition. 

Let $G$  be a torsion-free hyperbolic group with generating set $S$, and let $g\in G$. For a fixed integer $T\geq 1$, called the \textit{criterion}, the canonical representative $\theta_T(g)$ of $g$ with respect to $T$ is a $(G,S,\lambda, \mu)$-\qgeod\ word over $S$ which satisfies $\theta_T(g)=_G g$, $\theta_T(g)^{-1}=\theta_T(g^{-1})$. If $T$ is well-chosen, a number of combinatorial stability properties make canonical representatives with respect to $T$ particularly suitable for solving triangular equations in hyperbolic groups, as in \cref{CylStab}.

It is essential for the language characterisations in our main results that canonical representatives with respect to $T$ are $(G,S,\lambda, \mu)$-\qgeods, with $\lambda$ and $\mu$ depending only on $\delta$, and not on $T$.

\begin{proposition}
[{see  \cite[Proposition 3.4]{DahmaniIsrael}}]
\label{canrep_qg} There are constants $\lambda_G \geq 1$ and $\mu_G \geq 0$ 
depending only on $G$ such that for any criterion $T$ and any element $g \in G$ the canonical representative $\theta_T(g)$ of $g$ is a $(G,S,\lambda_G, \mu_G)$-\qgeod.
\end{proposition}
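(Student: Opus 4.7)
The plan is to extract, from the Rips--Sela construction of canonical representatives, the geometric features that force quasigeodicity with constants depending only on $\delta$ (and the fixed data $G,S$), not on the criterion $T$. Recall that $\theta_T(g)$ is built as follows: fix a geodesic $\gamma$ from $1$ to $g$ in $\Gamma(G,S)$ and select ``phase vertices'' $1=p_0,p_1,\dots,p_k=g$ along $\gamma$ whose spacing is controlled by $\delta$ and $T$. At each phase, one attaches a short ``approximation segment'' landing in a specified coset, and $\theta_T(g)$ is obtained by concatenating these approximation segments. The role of $T$ is only to select among admissible approximations; the \emph{geometry} of the resulting path is constrained by $\delta$-hyperbolicity alone.

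First I would prove the global quasigeodicity of $\theta_T(g)$, i.e.\ that $|\theta_T(g)|_S \leq \lambda_G\, d(1,g) + \mu_G$. Because each approximation segment has its endpoints within a $K(\delta)$-neighbourhood of $\gamma$, and because $G$ acts with finite point-stabilisers on a $\delta$-hyperbolic graph of bounded valence, any such segment has length at most $C(\delta,|S|)$ times the distance between its endpoints along $\gamma$, plus a universal additive constant. Summing over the $k$ phases and using that the phases cover $\gamma$ with bounded overlap yields the desired linear bound, with constants that are functions of $\delta$ and $|S|$ only.

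Second, I would upgrade this to quasigeodicity of \emph{every subword}. Let $w'$ be a subword of $\theta_T(g)$ with endpoints $p,q\in G$; I need $|w'|_S\leq \lambda_G\, d(p,q)+\mu_G$. The key geometric input is that the path traced by $\theta_T(g)$ lies in a uniform $K(\delta)$-neighbourhood of $\gamma$, and hence in a uniform neighbourhood of any geodesic between any two of its vertices (by slim triangles and standard stability of quasigeodesics in $\delta$-hyperbolic spaces). Applied to the sub-path corresponding to $w'$, this reduces the estimate to the global one applied to $w'$ itself, once one has shown that a subword of a canonical representative is, structurally, a concatenation of (possibly truncated) approximation segments. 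The truncation at the endpoints costs at most two boundary terms of size $C(\delta,|S|)$, which is absorbed in the additive constant $\mu_G$.

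The main obstacle is the independence from $T$. In the original Rips--Sela statement the constants involve $T$ through the spacing of the phase vertices, and a naive bound will reproduce this dependence. The point is that while $T$ controls how many approximation segments are produced, it does \emph{not} control the ratio between the length of each segment and the corresponding progress along $\gamma$: this ratio is bounded purely in terms of $\delta$ because alternative approximations differ from one another by elements of uniformly bounded word-length. Making this overlap analysis precise --- and verifying that the constants $C(\delta,|S|)$ above are genuinely $T$-free --- is the technical heart of the argument and is exactly what \cite[Proposition~3.4]{DahmaniIsrael} carries out.
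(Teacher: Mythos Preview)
The paper gives no proof of this proposition at all; it is stated purely as a citation to \cite[Proposition~3.4]{DahmaniIsrael}. So there is nothing in the paper to compare your sketch against beyond that reference, and your proposal itself ultimately defers to the same citation for ``the technical heart of the argument.'' In that sense the two agree.

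That said, the sketch you give before deferring is not an accurate description of the Rips--Sela construction, and would not carry you to a proof if you tried to fill it in. Canonical representatives are not built by choosing ``phase vertices'' along a single geodesic and attaching ``approximation segments landing in specified cosets.'' Rather, one defines \emph{cylinders} via an equivalence relation on edges of the Cayley graph (using flow relative to the criterion~$T$), partitions each cylinder into \emph{slices}, and then $\theta_T(g)$ is a path visiting the centres of successive slices. The $T$-independence of the quasigeodesic constants comes from the facts that (i) the diameter of each slice is bounded in terms of $\delta$ alone, and (ii) the centres of the slices lie in a uniform $\delta$-neighbourhood of any geodesic from $1$ to $g$ and project to it in a coarsely monotone way. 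Your step ``a path in a $K(\delta)$-neighbourhood of a geodesic is quasigeodesic'' is false as stated (such a path can oscillate arbitrarily); one needs the coarse monotonicity of the projection, which is exactly what the slice structure provides and what Dahmani verifies. The remark about ``finite point-stabilisers'' is also off: $G$ is torsion-free here, so stabilisers are trivial, and in any case this plays no role in the length estimate.

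If you want to include an actual argument rather than a citation, the clean route is: show the centres of slices lie within $K(\delta)$ of a geodesic $[1,g]$, show consecutive centres are at distance $\leq C(\delta)$, and show their nearest-point projections to $[1,g]$ are $C'(\delta)$-coarsely increasing. These three facts together give $(\lambda_G,\mu_G)$-quasigeodicity with constants depending only on $\delta$, by a standard broken-geodesic lemma in hyperbolic spaces.
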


The following result provides the main reduction of a system of equations in a torsion-free hyperbolic group to a system over the free group with the same basis.
 
 \begin{theorem}[{\cite[Theorem 4.2, Corollary 4.4]{RS95}}]\label{CylStab}
 Let $G$ be a torsion-free $\delta$-hyperbolic group generated by set $S$, and let $\Phi=\{\phi_j(\mathcal X, \mathcal{A})=1\}_{j=1}^q$ be a system of triangular equations, that is, $\phi_j(\mathcal X, \mathcal{A})=X_{(j,1)}X_{(j,2)}X_{(j,3)}=1,$
 where $1\leq j \leq q$ and $(j,a)\in \{1, \dots l\}$. 
 
 There exists an effectively computable criterion $T$, a
  constant $b=b(\delta, q)$ depending on $\delta$ and linearly on $q$ such that:
 if $(g_1, \dots, g_l) \in \text{Sol}_G(\Phi)$, then 
 there exist $y_{(j,a)}, c_{(j,a)} \in F(S)$, $1 \leq j \leq q, 1\leq a\leq 3$, such that $$|c_{(j,a)}|_S \leq b \textrm{ and } c_{(j,1)}c_{(j,2)}c_{(j,3)}=_G 1,$$ for which the canonical representatives satisfy:
  $$\theta_T(g_{(j,1)})=y_{(j,1)}c_{(j,1)} (y_{(j,2)})^{-1},$$ 
   $$\theta_T(g_{(j,2)})=y_{(j,2)}c_{(j,2)}(y_{(j,3)})^{-1},$$
    $$\theta_T(g_{(j,3)})=y_{(j,3)}c_{(j,3)} (y_{(j,1)})^{-1}.$$
   \end{theorem}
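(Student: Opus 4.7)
The plan is to follow the Rips--Sela strategy of analyzing the geometry of the canonical representatives $\theta_T(g_{(j,1)}), \theta_T(g_{(j,2)}), \theta_T(g_{(j,3)})$ attached to each triangular equation $X_{(j,1)} X_{(j,2)} X_{(j,3)} = 1$ inside the Cayley graph $\Gamma(G,S)$. Since $g_{(j,1)} g_{(j,2)} g_{(j,3)} =_G 1$, the three canonical representatives concatenate to a closed loop in $\Gamma(G,S)$, and by \cref{canrep_qg} each side is a $(G,S,\lambda_G, \mu_G)$-\qgeod\ with constants depending only on $G$.

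First, I would recall the construction of canonical representatives. For each $g\in G$ one chooses a specific path from $1$ to $g$ by a recursive procedure on geodesic triangles: one identifies a ``central region'' of $\delta$-bounded diameter near the would-be centre, together with two ``shared arms'' obtained by projection onto the two geodesics incident at each vertex. The criterion $T$ controls the scale at which two canonical representatives for elements $g,h$ that travel close in the Cayley graph are forced to coincide letter-by-letter: the basic stability lemma says that if $\theta_T(g)$ and $\theta_T(h)$ \qgeod-fellow-travel on a prefix of length $\gg T$, they actually agree as freely reduced words over $S$ on that prefix (the ``cylinder'' or stability property), and the construction is symmetric so $\theta_T(g)^{-1} = \theta_T(g^{-1})$.

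The main step is then to apply this stability to each triangular equation. By quasigeodesic stability in $\delta$-hyperbolic spaces, the triangle formed by $\theta_T(g_{(j,1)}), \theta_T(g_{(j,2)}), \theta_T(g_{(j,3)})$ is $\delta'$-slim for some $\delta' = \delta'(\delta, \lambda_G, \mu_G)$, so it admits a tripod decomposition: each of the three sides splits as two fellow-traveling arms (one with each neighbour) plus a central segment of length uniformly bounded by some $b_0 = b_0(\delta)$. Applying the stability lemma on each fellow-traveling arm produces group elements $y_{(j,a)}\in F(S)$ such that
\[
  \theta_T(g_{(j,1)}) = y_{(j,1)} c_{(j,1)} y_{(j,2)}^{-1}, \quad
  \theta_T(g_{(j,2)}) = y_{(j,2)} c_{(j,2)} y_{(j,3)}^{-1}, \quad
  \theta_T(g_{(j,3)}) = y_{(j,3)} c_{(j,3)} y_{(j,1)}^{-1},
\]
with $c_{(j,a)}$ the short central pieces; the closure condition $c_{(j,1)} c_{(j,2)} c_{(j,3)} =_G 1$ follows because multiplying the three displayed equalities cancels all $y_{(j,a)}$ and yields $\theta_T(g_{(j,1)}) \theta_T(g_{(j,2)}) \theta_T(g_{(j,3)}) =_G g_{(j,1)} g_{(j,2)} g_{(j,3)} = 1$.

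The hard part, and the place where the criterion $T$ and the bound $b$ must be chosen carefully, is ensuring that the decompositions extracted from the $q$ triangles are simultaneously compatible. Each variable $X_i$ may appear in several equations, so the endpoints $y_{(j,a)}$ read off from different triangles must refer to the same canonical representative of the same $g_i$; to force this one picks $T$ large relative to $\delta$ so that at every occurrence the stability lemma identifies the same letter-by-letter prefix, and then the central pieces inherit bounds that accumulate only linearly across the equations in which a given variable participates. Summing these local contributions yields the claimed bound $b = b(\delta,q)$, linear in $q$. I expect this combinatorial bookkeeping, together with the verification that $T$ can be chosen uniformly for all possible solutions $(g_1,\dots,g_l)$, to be the main technical obstacle; the rest is hyperbolic geometry of triangles and the definitional properties of $\theta_T$.
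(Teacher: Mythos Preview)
The paper does not give its own proof of this statement: it is quoted verbatim from Rips--Sela \cite[Theorem 4.2, Corollary 4.4]{RS95} and used as a black box, with the reader referred there (and to \cite[Appendix C]{CiobanuEicalp2019}) for the construction of canonical representatives and the proof. So there is no in-paper argument to compare your proposal against.

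That said, your sketch is broadly the Rips--Sela strategy, but one point is off. You locate the difficulty in making the decompositions ``simultaneously compatible'' because a variable $X_i$ may occur in several equations, and you suggest the $y_{(j,a)}$ extracted at different occurrences must agree. They need not: the $y$'s are indexed by $(j,a)$, not by the underlying variable, and the theorem does not assert any relation between the arms of $\theta_T(g_i)$ coming from different triangles. What must be the same across occurrences is only $\theta_T(g_i)$ itself, and that is automatic once $T$ is fixed. The genuine reason $T$ (and hence $b$) depends linearly on $q$ is a pigeonhole argument: for each triangle there is a positive-density set of criteria $T$ in a range of size $O(q)$ for which the cylinder-stability estimate holds, and one needs a single $T$ lying in the intersection of these $q$ sets. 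Your phrase ``the central pieces inherit bounds that accumulate only linearly across the equations'' misattributes the mechanism; the bound on each $c_{(j,a)}$ is controlled by $T$, and it is the choice of $T$ (not any accumulation over occurrences of a variable) that brings in the factor $q$.
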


The following explains how \cref{CylStab} is employed.
\subsection{Reduction from torsion-free hyperbolic to free groups: an overview.} 
The first part of the algorithm that finds solutions to a system of equations and inequations is to reduce it to a triangular one, that is, where each equation has length $3$. In the free group $F(S)$, a triangular equation such as $XY=Z$ 
 has a solution in reduced words over $S$ if and only if there exist words $P,Q,R$  with $X=PQ, Y=Q^{-1}R, Z=PR$ where no cancellation occurs between $P$ and $Q$, $Q^{-1}$ and $R$, and $P$ and $R$. 
 (This allows the translation of equations in free groups into equations in free monoids with involution, as in \cite[Lemma 4.1]{CDE}). 
 
  In a hyperbolic group, the direct reduction of a triangular equation to a system of cancellation-free equations is no longer possible. Instead of looking for geodesic solutions $(g_1,g_2,g_3)$ in $G$ to the equation $XY=Z$, represented by a geodesic triangle in the Cayley graph of $G$  as in \cref{subfig:geod}, one looks for solutions as in \cref{CylStab}, illustrated in \cref{subfig:can}. That is, one finds the solution $g_i$ via some canonical representative $\theta_T(g_i)$, where $T$ is computable and guaranteed to exist; we henceforth write $\theta(-)$ for $\theta_T(-)$. By \cref{CylStab} there exist $y_i, c_i \in F(S)$ such that the equations $\theta(g_1)=y_1c_1y_2$, $\theta(g_2)=y_2^{-1}c_2y_3$, $\theta(g_3)=y_3^{-1}c_3y_1^{-1}$ are cancellation-free (no cancellation occurs between $y_1$ and $c_1$, $c_1$ and $y_2$ etc.) equations over $F(S)$. Moreover,  \cref{CylStab} implies that the $y_i$'s should be viewed as `long' prefixes and suffixes that coincide, and the $c_1 c_2 c_3$ as a `small' inner circle with circumference in $\Oh(n)$ (see \cref{subfig:can}).

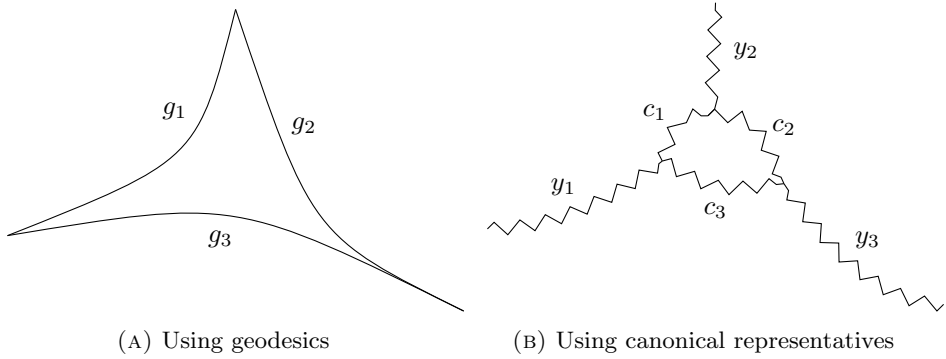
\begin{figure}[ht]
    \centering
     \begin{subfigure}[t]{0.45\textwidth}
        \begin{tikzpicture}[font=\sffamily]
\draw[black] (0,0) .. controls  (2.5,1) ..  (3,3) ;
\draw[black]  (0,0) .. controls  (3,.5) .. (6,-1);
\draw[black]  (6,-1) .. controls  (4,0) .. (3,3);
  
\draw (2.2,1.3) node [label=$g_1$]  {};
\draw   (3.9,1.1) node [label=$g_2$]{};
\draw   (2.8,-.4) node [label=$g_3$] {}; 
  
\end{tikzpicture}
        \subcaption{Using geodesics}\label{subfig:geod}
    \end{subfigure}\hspace{5mm}
       \begin{subfigure}[t]{0.45\textwidth}
               \begin{tikzpicture}[font=\sffamily]
\draw (0,0)[decorate, decoration=zigzag]  parabola (2.3,.9);
\draw (6,-1)[decorate, decoration=zigzag]    parabola (3.9,.6);
\draw (2.9,1.5)[decorate, decoration=zigzag]  parabola  (3,3);

\draw[decorate, decoration=zigzag] (2.3,.9)  .. controls (3,.5) .. (3.9,.6);
\draw[decorate, decoration=zigzag] (2.9,1.5)   .. controls (3.5,1.5) ..(3.9,.6);
\draw[decorate, decoration=zigzag] (2.3,.9)   .. controls (2.5,1.5) ..(2.9,1.5);

\draw (2.2,1.2) node [label=$c_1$]  {};
\draw   (3.9,1) node [label=$c_2$]{};
\draw   (3,-.1) node [label=$c_3$] {};

\draw (1,.2) node [label=$y_1$]  {};
\draw   (3.4,2) node [label=$y_2$]{};
\draw   (5,-.5) node [label=$y_3$] {};

\end{tikzpicture}

      \subcaption{Using canonical representatives}\label{subfig:can}
    \end{subfigure}
    \caption{Solutions to $XY=Z$ in a hyperbolic group.}\label{fig:triangle}
\end{figure}

We now prove \cref{prop:MAIN-torsionfree}, which together with \cref{prop:shortlex-hyp} gives \cref{thmTorsionFree}. This proposition provides a covering set of solutions (that is, every solution as tuple of group elements is found, but not necessarily written as shortlex representatives etc) that can then be processed via \cref{prop:shortlex-hyp} to give the EDT0L formal language description in the space complexity claimed in \cref{thmTorsionFree}.

\begin{proposition}\label{prop:MAIN-torsionfree}
Let $G$ be a torsion-free hyperbolic group with finite symmetric generating set $S$.  Let $\#$ be a symbol not in $S$, and let $\Sigma=S\cup\{\#\}$.  
Let $\Phi$ be a system of equations and inequations
of size $n$ (without rational constraints) 
over variables $X_1,\dots,  X_r$, as in \cref{defn:system}.

\begin{enumerate}
\item There exists a language
$L=\{w_1\#\dots\# w_r \}$ over $\Sigma$
such that \begin{enumerate}\item $L$ is a covering solution set, 
\item $ w_i \in Q_{G,S,\lambda,\mu}, 1\leq i \leq r $ where $\lambda=\lambda_G, \mu=\mu_G$ are the (fixed) constants from \cref{canrep_qg},
 \item  $L$ is  EDT0L  in $\NSPACE(n^2\log n)$.
 \end{enumerate}
\item The system $\Phi$ has infinitely many solutions if and only if $L$ is infinite, and no solution if and only if $L$ is empty.
\end{enumerate}
\end{proposition}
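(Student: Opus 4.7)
The plan is to reduce the system $\Phi$ over $G$ to a system over the free group $F(S)$ via Rips--Sela canonical representatives (\cref{CylStab}), invoke \cref{thmCDE}, and project. First I would triangulate $\Phi$ in the standard way: every equation $\phi_j = 1$ of length $\ell_j$ is replaced, by introducing $\ell_j - 2$ fresh variables, by a conjunction of triangular equations $XYZ = 1$, giving an equisolvable triangular system with $q \in \Oh(n)$ equations, each of size $3$. Each inequation $\phi_l \neq 1$ is replaced by the equation $\phi_l = Z_l$ (triangulated in turn) together with the short inequation $Z_l \neq 1$, which will be carried over to the free-group system as a rational constraint of constant size.

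Next, for the resulting triangular system, \cref{CylStab} provides an effectively computable criterion $T$ and a bound $b \in \Oh(n)$ such that any $G$-solution gives rise to canonical representatives $\theta_T(g_{(j,a)}) = y_{(j,a)} c_{(j,a)} y_{(j,a+1)}^{-1}$ in $F(S)$ with $|c_{(j,a)}|_S \leq b$ and $c_{(j,1)} c_{(j,2)} c_{(j,3)} =_G 1$ for each triangle. I would nondeterministically guess, for each $(j,a)$, a word $c_{(j,a)} \in S^*$ of length at most $b$; writing such a guess down uses $\Oh(q \cdot b \cdot \log|S|) = \Oh(n^2)$ bits. Using Dehn's algorithm (which runs in linear space), I verify $c_{(j,1)}c_{(j,2)}c_{(j,3)} =_G 1$ for every triangle. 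For each valid guess, form a system $\Psi$ over $F(S)$ whose variables are the $y_{(j,a)}$, with equations $y_{(j_1,a_1)} c_{(j_1,a_1)} y_{(j_1,a_1+1)}^{-1} = y_{(j_2,a_2)} c_{(j_2,a_2)} y_{(j_2,a_2+1)}^{-1}$ enforcing that different occurrences of the same $X_i$ give the same canonical representative, together with constant-size rational constraints at each join requiring that the displayed factorisations are reduced (a local, finite-state condition on last/first letters of $y$'s relative to $c$'s). The inequations from the previous paragraph appear verbatim. The size of $\Psi$ is dominated by the total length of the $c_{(j,a)}$, which is $\Oh(q) \cdot \Oh(b) = \Oh(n^2)$.

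Now I apply \cref{thmCDE} to $\Psi$: this prints, in $\NSPACE(|\Psi| \log |\Psi|) = \NSPACE(n^2 \log n)$, an EDT0L grammar over $S$ for the tuples of freely reduced words solving $\Psi$. I output $\theta_T(g_i)$ for each original variable $X_i$ by fixing one occurrence $X_{(j,a)}$ of $X_i$ and applying the homomorphism that sends the corresponding $y$-tuple to $y_{(j,a)} c_{(j,a)} y_{(j,a+1)}^{-1}$ (with the known word $c_{(j,a)}$), then use \cref{prop:projection} to keep only the $X_1, \dots, X_r$ coordinates separated by $\#$. By \cref{canrep_qg}, every such projected word is a $(G, S, \lambda_G, \mu_G)$-\qgeod, giving condition (1b). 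The union over all nondeterministic guesses is handled by \cref{prop:closureET0L}(1): across nondeterministic branches each producing an EDT0L grammar within the same space bound, the standard construction adds a fresh start node and a deterministic table, preserving EDT0L-ness and the space complexity. The forward direction of \cref{CylStab} shows every $G$-solution arises from some guess, so $L$ covers $\text{Sol}_G(\Phi)$; conversely every word of $L$ projects to a solution by construction. For item (2), canonical representatives with respect to $T$ are unique per group element and the number of valid guesses is finite, so $L$ is empty (resp.\ finite) iff $\text{Sol}_G(\Phi)$ is. The main obstacle is the complexity bookkeeping around $b \in \Oh(n)$: individual $c_{(j,a)}$ are short, but collectively they inflate $\Psi$ to size $\Oh(n^2)$, which is precisely the quadratic blow-up driving the space bound from the $n \log n$ of \cref{thmCDE} up to $n^2 \log n$.
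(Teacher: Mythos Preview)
Your overall architecture---triangulate, guess the short central words $c_{(j,a)}$, lift to a free-group system, apply \cref{thmCDE}, and take the union over guesses---matches the paper's. But there is a genuine gap: you never impose the constraint that the free-group solutions lie in $Q_{G,S,\lambda_G,\mu_G}$, and several of your claims collapse without it.

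Your appeal to \cref{canrep_qg} for condition~(1b) is circular. That proposition says that the canonical representative $\theta_T(g)$ is a $(\lambda_G,\mu_G)$-\qgeod; it says nothing about an arbitrary free-group solution of $\Psi$. Soundness only guarantees that if $(y_{(j,a)})$ solves $\Psi$ in $F(S)$ then the products $y_{(j,a)}c_{(j,a)}y_{(j,a+1)}^{-1}$ project to a $G$-solution; nothing forces these words to be \qgeods\ in~$G$, and in general they are not. The paper fixes this in its Step~5 by adding, for every variable, the constant-size regular constraint (regular by \cref{prop:reg-hyp}) that its value lies in $Q_{G,S,\lambda_G,\mu_G}$ before calling CDE. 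This omission also breaks item~(2): without the \qgeod\ constraint a single $G$-solution can have infinitely many free-group lifts, so $L$ infinite need not imply $\text{Sol}_G(\Phi)$ infinite. Your remark that ``canonical representatives with respect to $T$ are unique'' only addresses completeness, not the soundness direction where the problem lies. Finally, your inequation handling has the same defect: carrying $Z_l\neq 1$ to $F(S)$ gives $Z_l\neq_{F(S)}1$, not $Z_l\neq_G 1$. The paper's fix again relies on the \qgeod\ constraint: once $Z_l$ is forced to be a $(\lambda_G,\mu_G)$-\qgeod, $Z_l=_G 1$ implies $|Z_l|_S\leq\mu_G$, and one excludes the finitely many such words representing $1_G$ via Dehn's algorithm.

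A smaller point: the ``homomorphism'' you invoke to assemble $y_{(j,a)}c_{(j,a)}y_{(j,a+1)}^{-1}$ from the $\#$-separated tuple of $y$'s is not a free-monoid homomorphism (one factor must be reversed and inverted). The paper sidesteps this by keeping the original variables $X_j,Y_j,Z_j$ in the free-group system alongside the new $P_j,Q_j,R_j$, with defining equations $X_j=P_jc_{j1}Q_j$ etc., so that CDE outputs the $X_j$ directly.
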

\begin{proof} 
We produce a language $L$ of \qgeod\ words over $S$ as above such that any tuple $(\pi(w_1),\dots,\pi(w_r))$ arising from some $w_1\#\dots\# w_r$ in $L$ is in the solution set $\text{Sol}_G(\Phi)$ (soundness). We then prove 
(using \cref{CylStab}) that any solution in $\text{Sol}_G(\Phi)$ is the projection, via $\pi$, of some element in $L$ (completeness).

\medskip

\noindent \textbf{Steps 1 and 2: Preprocessing}
 \begin{itemize}
 \item[Step 1][Remove inequations]
  We first transform $\Phi$ into a system consisting entirely of equations by adding a variable $X_N$ to $\mathcal{X}$ and replacing any inequation $\varphi_j(\mathcal X, \mathcal{A})\neq1$ by $\varphi_j(\mathcal X, \mathcal{A}) = X_N$, with the constraint $X_N\neq_G 1$.

\item[Step 2] [Triangulate] 
We transform each equation into several equations of length $3$, by introducing new variables. This can always be done (see the discussion in \cite[\S{4}]{CDE}), and it produces approximately $\sum_{i=1}^s l_i\in \Oh(n)$ triangular equations with set of variables $\mathcal{Z}$
 where $|\mathcal Z|\in \Oh(n)$ and $\mathcal X \subset \mathcal{Z}$. 
From now on assume that the system $\Phi$ consists of $q\in \Oh(n)$ equations of the form
 $X_jY_j=Z_j$ where $1\leq j \leq q$. 
\end{itemize}
\noindent\textbf{Steps 3 and 4: Lifting $\Phi$ to the free group on $S$}

 \begin{itemize}
 \item[Step 3] Let $b \in \Oh(q)= \Oh(n)$ be the constant from \cref{CylStab} that depends on $\delta$ and linearly on $q$. We run in lexicographic order through all possible tuples of words $\mathbf c=(c_{11},c_{12},c_{13},\dots, c_{q1},c_{q2},c_{q3})$ with $c_{ji}\in S^*$ and $|c_{ji}|_S\leq b$.
 \item[Step 4]
 For each tuple $\mathbf c$ we use Dehn's algorithm to check whether $c_{j1}c_{j2}c_{j3}=_G1$; if this holds for all $ j $ we construct a system $\Phi_{\mathbf c}$ of equations of the form 
\begin{equation}\label{csystem}
X_j=P_jc_{j1}Q_j, \ Y_j=Q_j^{-1}c_{j2}R_j, \ Z_j=P_jc_{j3}R_j, \ \ 1\leq j\leq q,
\end{equation}
where $P_i, Q_i, Z_i$ are new variables. This new system has size $\Oh(n^2)$. 
 In order to avoid an exponential size complexity we write down each system $\Phi_{\mathbf c}$ one at a time, so the space required for this step is $\Oh(n^2)$.  
\end{itemize}
Let $\mathcal{Y}\supset\mathcal{Z}\supset \mathcal X$, $|\mathcal{Y}|=m$, be the new set of variables, including all the $P_i, Q_i, Z_i$. 
 \begin{itemize}
 \item[Soundness]Any solution to  $\Phi_{\mathbf c}$ 
in the free group $F(S)$ is clearly a solution to $\Phi$ in the original hyperbolic group $G$ when restricting to the original variables $\mathcal X$. That is, if $(w_1, \dots, w_m) \subseteq F^m(S)$ is a solution to $\Phi_{\mathbf c}$, then $(\pi(w_1), \dots, \pi(w_r))$ is a solution to $\Phi$ in $G$. This shows soundness.
\end{itemize}

Now let $\lambda_G \geq 1, \mu_G\geq 0$ be the constants provided by \cref{canrep_qg}.
Note that if a word $w\in S^*$ is a $(G,S,\lambda_G,\mu_G)$-\qgeod\ and satisfies $w=_G1$ then $\abs{w}_S \leq \mu_G$. We can construct an NFA $\mathcal N$ which accepts all words in $S^*$ equal to $1$ in the hyperbolic group $G$ of length at most $\mu_G$  in constant space (using for example Dehn's algorithm), and in our next step we will use this rational constraint to handle
the variable $X_N$ added in Step 1 (to remove inequalities).

\medskip

\noindent \textbf{Steps 5 and 6: Solving equations with constraints in $F(S)$}

\medskip

\begin{itemize}
 \item[Step 5] 
We now run  the algorithm from \cite{CDE} (which we will refer to as the CDE algorithm) which takes input $\Phi_\mathbf{c}$ of size $\Oh(n^2)$, plus the rational constraint $X_N\not\in  L(\mathcal N)$, plus for each $Y\in \mathcal Y$ the rational constraint that the solution for  $Y$ is a word in $Q_{G,S,\lambda_G, \mu_G}$.  
Since these constraints have constant size (depending only on the group $G$, not the system $\Phi$),  they do not contribute to the $\Oh(n^2)$ size of the input to the CDE algorithm.

We modify the CDE algorithm to ensure every node printed by the algorithm includes the additional label $\mathbf c$. (This ensures the NFA we print for each system $\Phi_{\mathbf c}$ is distinct.)
This does not affect the complexity since $\mathbf c$ has size in $\Oh(n^2)$. Let $\mathcal C$ be the set of all tuples ${\mathbf c}$.
 \item[Step 6] We run the modified CDE algorithm described above  (adding label $\mathbf c$ for each node printed) to print an NFA (possibly empty) for each $\Phi_{\mathbf c}$, which is the rational control for an EDT0L grammar that produces all solutions as freely reduced words in $F(S)$, which correspond to solutions as $(G,S,\lambda_G,\mu_G)$-\qgeods\ to the same system $\Phi_{\mathbf c}$ in the hyperbolic group.

\end{itemize}

\begin{itemize}
\setcounter{enumi}{7}

\item[Completeness] If $(g_1,\dots, g_r)\in  G^r$ is a solution to $\Phi$ in the original hyperbolic group,  \cref{CylStab} 
guarantees that there exist canonical representatives $w_i\in Q_{G,S,\lambda_G, \mu_G}$ with $w_i=_Gg_i$, which have freely reduced forms $u_i=_Gw_i$ for $1\leq i\leq m$, and our construction is guaranteed to capture any such collection of words. More specifically, if $(w_1,\dots, w_r)$ is a solution in canonical representatives to $\Phi$ 
then $(u_1,\dots, u_r,\dots u_{|\mathcal Y|})$  will be included in the solution to $ \Phi_{\mathbf c}$ produced by the CDE algorithm, with $u_i$ the reduced forms of $w_i$ for $1\leq i\leq m$. This shows completeness once we union the grammars from all systems $\Phi_{\mathbf c}$ together.

\item[Step 7] [Producing the solutions in $G$] Add a new start node with edges to each of the start nodes of the NFA's with label $\mathbf c$, for all $\mathbf c \in \mathcal C$. We obtain an automaton which is the rational control for the language $L$ in the statement of the proposition. This rational control makes $L$ an EDT0L language of $(G, S, \lambda, \mu)$-\qgeods \ as required, where $\lambda:=\lambda_G, \mu:=\mu_G$. 
The space needed is exactly that of the CDE algorithm on input $\Oh(n^2)$, which is $\NSPACE(n^2\log n)$.
\end{itemize}

To see why (2) in the statement of the proposition holds, note that when we call on the CDE algorithm above, this will be able to report if the system over $F(S)$ has infinitely many, finitely many, or no solutions. If CDE reports finitely many or no solutions then we are done, as $L$ will be finite or empty, respectively. If CDE returns infinitely many solutions, note that there are only finitely many words over $S$ per group element in $G$ in the solution set because (in Step 5) we run CDE with the rational constraint that only $(G,S,\lambda_G,\mu_G)$-\qgeods\ are part of output, so we end up with infinitely many solutions in $\text{Sol}_G(\Phi)$. 
 \end{proof}

We can now prove our main result about torsion-free hyperbolic groups. 

\begin{theorem}[Torsion-free]\label{thmTorsionFree}
Let $G$ be a torsion-free hyperbolic group with finite symmetric generating set $S$.  Let $\#$ be a symbol not in $S$, and let $\Sigma=S\cup\{\#\}$.  
Let $\Phi$ be a system of equations and inequations
of size $n$ with constant size effective  \qier\ constraints over variables $X_1,\dots,  X_r$, as in \cref{sec:notationSolns,defn:explicit}.

Then we have  the following.

 \begin{enumerate}
     \item\label{item:THMsurjectSet}  For any $\lambda'\geq 1,\mu'\geq 0, \lambda'\in\mathbb Q$ and for any regular language $\langT \subseteq Q_{G,S,\lambda', \mu'}$ such that the projection $\pi:\langT\to G$ is a surjection,
the   full set of $\langT$-solutions
 \[\text{Sol}_{\langT,G}(\Phi)=\{(w_1\#\dots\# w_r ) \mid w_i \in \langT,  (\pi(w_1),\dots, \pi(w_r) ) \text{ solves  } \Phi\}\]
   is ET0L  in $\NSPACE(n^2\log n)$.
  \item \label{item:THMshortlex}
  For any $\lambda'\geq 1,\mu'\geq 0, \lambda'\in\mathbb Q$ and for any regular language $\langT\subseteq Q_{G,S,\lambda', \mu'}$ such that the projection $\pi: \langT\to G$ is a bijection,
the   full set of $\langT$-solutions
 \[\text{Sol}_{\langT,G}(\Phi)=\{(w_1\#\dots\# w_r ) \mid w_i \in \langT,  (\pi(w_1),\dots, \pi(w_r) ) \text{ solves  } \Phi\}\]
   is EDT0L  in $\NSPACE(n^2\log n)$.
\item It can be decided in $\NSPACE(n^2\log n)$ whether or not  $\text{Sol}_G(\Phi)$
is empty, finite or infinite. 
\end{enumerate}
\end{theorem}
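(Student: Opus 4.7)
The plan is to combine \cref{prop:MAIN-torsionfree}, the rational-constraint handling from \cref{prop:DG9.4complexity}, and the covering-to-full-set conversion of \cref{prop:shortlex-hyp}. The main structural work is already done in \cref{prop:MAIN-torsionfree}; what remains is to incorporate the effective qier constraints into the reduction to the free group and then translate the covering solution set into the desired $\langT$-based normal forms.

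First I would preprocess the constraints. For each variable $X_i$ the system carries a constant-size effective qier constraint $R_{X_i}\subseteq G$. Applying \cref{prop:DG9.4complexity} with parameters $\lambda_G,\mu_G$ (the constants from \cref{canrep_qg}) converts each such constraint into an NFA $A_i'$ accepting $\pi^{-1}(R_{X_i})\cap Q_{G,S,\lambda_G,\mu_G}$. Since the original constraint NFAs are constant size, the $A_i'$ are constant size (a function of $G$, $\lambda_G,\mu_G$, and the fixed input NFAs alone) and computing them contributes only constant space to any enclosing algorithm, as discussed after \cref{prop:DG9.4complexity}.

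Next I would rerun the construction of \cref{prop:MAIN-torsionfree} verbatim, except that in Step 5, when invoking the CDE algorithm of \cite{CDE} on the triangulated lifted system $\Phi_{\mathbf c}$ of size $\Oh(n^2)$, I feed in as additional rational constraints the constant-size NFAs $A_i'$ (one for each original variable $X_i$) together with the standing constraint that every variable in $\mathcal Y$ take values in $Q_{G,S,\lambda_G,\mu_G}$. Since all extra constraints are of constant size and \cref{thmCDE} permits constant-size rational constraints within the same $\NSPACE$ bound, the overall complexity is unchanged: we obtain a covering solution set $L$ with $w_i\in Q_{G,S,\lambda_G,\mu_G}$ and $\pi(w_i)\in R_{X_i}$, which is EDT0L in $\NSPACE(n^2\log n)$. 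Soundness and completeness follow exactly as in \cref{prop:MAIN-torsionfree}: the canonical-representative lifting of \cref{CylStab} is unaffected by imposing the constraints at the level of group elements because $\theta_T(g)=_G g$, while the doubled-quasigeodesic filter ensures every preimage we might want is reachable.

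To finish, I would apply \cref{lem:doubleME} to the covering EDT0L language $L$ to obtain $L_{\mathrm{cover}}=\{wf(w)\mid w\in L\}$, still EDT0L in $\NSPACE(n^2\log n)$, and then feed $L_{\mathrm{cover}}$ into \cref{prop:shortlex-hyp}. Part (1) of that proposition (surjective $\pi\colon\langT\to G$) yields item (1) of the theorem as ET0L, and part (2) of that proposition (bijective $\pi\colon\langT\to G$) yields item (2) as EDT0L, both in $\NSPACE(n^2\log n)$. Item (3) of the theorem follows from the ``moreover'' clause of \cref{prop:shortlex-hyp} together with item (2) of \cref{prop:MAIN-torsionfree}: the covering set $L$ is empty (respectively finite) iff $\text{Sol}_G(\Phi)$ is empty (respectively finite), and this dichotomy is preserved by the constructions above; emptiness/finiteness of an EDT0L language with a specification printable in $\NSPACE(n^2\log n)$ can be decided in that same space bound. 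The only delicate point is ensuring that the qier-to-quasigeodesic conversion of \cref{prop:DG9.4complexity} really is benign in our complexity accounting; this is exactly the constant-size hypothesis, and is the reason the theorem is stated for \emph{constant size} effective qier constraints rather than arbitrary ones (the arbitrary case is handled separately in \cref{thmCWithConstraints}).
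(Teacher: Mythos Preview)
Your proposal is correct and follows essentially the same route as the paper: apply \cref{prop:DG9.4complexity} to convert each constant-size effective \qier\ constraint to a constant-size quasigeodesic NFA, obtain a covering solution set via \cref{prop:MAIN-torsionfree}, double it with \cref{lem:doubleME}, and finish with \cref{prop:shortlex-hyp}. The only cosmetic difference is that you feed the constraint automata $A_i'$ into the CDE call inside Step~5 of \cref{prop:MAIN-torsionfree}, whereas the paper keeps \cref{prop:MAIN-torsionfree} as a black box and intersects its output with the regular language $\widetilde{R_{X_1}}\#\dots\#\widetilde{R_{X_r}}$ afterwards using \cref{prop:closureET0L}(3); both achieve the same covering set within the same space bound.
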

In particular, if $\langT$ is the set of all shortlex geodesics over $S$, then \Cref{thm:IntroTorsionFree} follows immediately.

\begin{proof} Suppose $R_{X_i}$, $X_i\in \mathcal{X}$, are the  constant size effective \qier\ constraints that are part of the system $\Phi$.
 By  \cref{prop:DG9.4complexity}
 (\cite[Proposition 9.4]{DG})
 the languages $\widetilde{R_{X_i}}=\pi^{-1}(R_{X_i})\cap Q_{G,S,\lambda_G,\mu_G}$ are regular and one can compute automata accepting them in constant space. (Recall, the complexity for this step  only depends on the constant size $R_X$, so from the point of view of our result the complexity is constant.)

Let $\Phi'$ be the system consisting only of the equations and inequations from $\Phi$. 
Apply  \cref{prop:MAIN-torsionfree} to $\Phi'$ to obtain  an EDT0L language  $L$ of covering solutions to $\Phi'$.
Then intersect the language $L$ of covering solutions with $\widetilde{R_{X_1}}\# \dots\#\widetilde{R_{X_r}}$ to ensure that the solutions belong to the input constraints, and call this covering solution set $L_c$ (to point out that constraints have been taken into account). The language $L_c$ is EDT0L, as the intersection of an EDT0L language with a regular language, and stays in the same space complexity of $\NSPACE(n^2\log n)$ by \cref{prop:closureET0L}.
 
 We then apply \cref{lem:doubleME} to the covering set $L_c$ to double it and get a set which satisfies the conditions and plays the role of the set $L_{\text{cover}}$ in \cref{prop:shortlex-hyp}. Then 
\cref{prop:shortlex-hyp} applied to this set shows (1) that the set of solutions written as words in a regular language (of \qgeods) in bijection with the group is EDT0L, and (2) written as words in a regular language (of \qgeods) surjecting onto the group is ET0L. Part (3) also follows immediately from \cref{prop:shortlex-hyp}.
\end{proof}

\section{Equations and their solutions in hyperbolic groups with torsion}\label{sec:torsion}

In the case of a hyperbolic group $G$  with torsion, the general approach of Rips and Sela can still be applied, but the existence of canonical representatives is not always guaranteed (see Delzant \cite[Rem.III.1]{Delzant}). To get around this, \DG\  `fatten' the Cayley graph $\Gamma(G,S)$ of $G$ to a larger graph $\mathcal K$ which contains $\Gamma(G,S)$ (in fact $\Gamma(G,S)$ with midpoints of edges included), and solve equations in $G$ by considering equalities of paths in  $\mathcal{K}$. 
 More precisely, $\mathcal{K}$ is the $1$-skeleton of the barycentric subdivision of a Rips complex of $G$, as explained below.
 
 Let $P_{50\delta}(G)$ be the Rips complex whose set of vertices is $G$, and whose simplices are subsets of $G$ of diameter at most $50\delta$ (see \cref{RipsComplex}). Then let $\mathcal{B}$ be the barycentric subdivision of $P_{50\delta}(G)$ and let $\mathcal{K}=\mathcal{B}^1$ the $1$-skeleton of $\mathcal{B}$. By construction, the vertices of $\mathcal{K}$ (and $\mathcal{B}$) are in $1$-to-$1$ correspondence with the simplices of $P_{50\delta}(G)$, so we can identify $G$, viewed as the set of vertices of $\Gamma(G,S)$, with a subset of vertices of $\mathcal{K}$.  

\begin{remark}\label{rmkKG} The graphs $\mathcal{K}$ and $\Gamma(G,S)$ are quasi-isometric, and any vertex in $\mathcal{K}$ that is not in $G$ is at distance $1$ (in $\mathcal{K}$) from a vertex in $G$.
\end{remark}

\begin{definition} \label{Kpaths}
Let $\gamma, \gamma'$ be paths in $\mathcal{K}$.
\begin{itemize}
\item[(i)] We denote by $i(\gamma)$ the initial vertex of $\gamma$, by $f(\gamma)$ the final vertex of $\gamma$, and by $\overline{\gamma}$ the reverse of $\gamma$ starting at $f(\gamma)$ and ending at $i(\gamma)$.  
\item[(ii)] We say that $\gamma$ is \emph{reduced} if it contains no backtracking, that is, no subpath of length $2$ of the form $e \overline{e}$, where $e$ is an edge. 
\item[(iii)] We write $\gamma \gamma'$ for the concatenation of $\gamma$, $\gamma'$ if $i(\gamma')=f(\gamma)$. 
\item[(iv)] Two paths in $\mathcal{K}$ are \textit{homotopic} if one can obtain a path from the other by adding or deleting backtracking subpaths. Each {\em homotopy class} $[\gamma]$ has a unique reduced representative. 
\end{itemize}
\end{definition}

Let $V$ be the set of all homotopy classes $[\gamma]$ of paths $\gamma$ in $\mathcal{K}$ starting at $1_G$ and ending at a point in $\Gamma(G,S)$ which corresponds to some element of $G$, that is, 
\begin{equation}\label{defV}
V:=\{[\gamma] \mid \gamma \in \mathcal{K}, i(\gamma)=1_G, f(\gamma)\in G \}.
\end{equation} For $[\gamma]$, $[\gamma'] \in V$ define their product $[\gamma][\gamma']:=[\gamma {}_v\gamma']$, where $\gamma {}_v\gamma'$ denotes the concatenation of $\gamma$ and the translate ${}_v\gamma'$ of $\gamma'$ by $v=f(\gamma)$; we will abuse notation and write $\gamma \gamma'$ for the concatenation of $\gamma$ and the translate of $\gamma'$ by $f(\gamma)$, without additional mention of the translation.
Let $[\gamma]^{-1}$ be the homotopy class of ${}_{v^{-1}}\overline{\gamma}$. 

The set $V$ is then a group that projects onto $G$ via the final vertex map $f$, that is,
 $f:V \twoheadrightarrow G$ is a surjective homomorphism. Since $G$ has an action on $\mathcal{K}$ induced by the natural action on its Rips complex, $V$ will act on $\mathcal{K}$ as well. This gives rise to an action of $V$ onto the universal cover $T$ (which is a tree) of $\mathcal{K}$, and \cite[Lemma 9.9]{DG} shows that the quotient $T/V$ is a finite graph (isomorphic to $\mathcal{K}/G$) of finite groups, and so $V$ is virtually free.
 
 We assume that the algorithmic construction (see \cite[Lemma 9.9]{DG}) of a presentation for $V$ is part of the preprocessing of the algorithm, will be treated as a constant, and will not be included in the complexity discussion.

The first step in solving a system $\Phi$ of (triangular) equations in $G$ is to translate $\Phi$ into identities between ($\lambda_1, \mu_1$)-\qgeod\ paths in $\mathcal{K}$ ($\lambda_1$ and $\mu_1$ are defined after \cref{prop:K}) that have start and end points in $G$. These
 paths can be seen as the analogues of the canonical representatives from the torsion-free case. This can be done by Proposition 9.8 \cite{DG} (see \cref{prop:K}). 
The second step in solving $\Phi$ is to express the equalities of \qgeod\ paths in $\mathcal{K}$ in terms of equations in the virtually free group $V$ based on $\mathcal{K}$. Finally, Proposition 9.10 \cite{DG} (see \cref{prop:V}) 
shows it is sufficient to solve the systems of equations in $V$ in order to obtain the solutions of the system $\Phi$ in $G$. 

\begin{notation} \label{not:constants} Let $\lambda_0=400 \delta m_0$, where $m_0$ is the size of a ball of radius $50\delta$ in $\Gamma(G,S)$, $\mu_0=8$ and $b=b(\delta, |\Phi|)$ is a linear function of the size of the system $\Phi$ ($b$ is a multiple of the constant `$bp$' defined in \cite[Theorem 4.2]{RS95}). 
\end{notation}

\begin{proposition}[{\cite[Proposition 9.8]{DG}}]\label{prop:K}
Let $G$ be a $\delta$-hyperbolic group generated by $S$, $\mathcal{K}$ the graph defined above, and $\Phi=\{\phi_j(\mathcal X, \mathcal{A})=1\}_{j=1}^q$ a system of triangular equations on variables $\{X_1, \dots, X_l\}$. That is, $\phi_j(\mathcal X, \mathcal{A})=X_{(j,1)}X_{(j,2)}X_{(j,3)}=1,$
 where $1\leq j \leq q$. 
Let $\lambda_0$, $\mu_0=8$ and $b$ be as in  \cref{not:constants}.

 If $(g_1, \dots, g_l) \in \text{Sol}_G(\Phi)$, then there exist paths $y_{(j,a)}, c_{(j,a)} \in \mathcal{K}$, $1 \leq j \leq q, 1\leq a\leq 3$ and $(j,a)\in \{1, \dots l\}$, such that $$\ell_{\mathcal{K}} (c_{(j,a)})\leq b \textrm{ and } f(c_{(j,1)}c_{(j,2)}c_{(j,3)})=1 \in G,$$ and there exist $(\mathcal{K}, \lambda_0,\mu_0)$-\qgeod\ paths $\gamma(g_{(j,a)})$ joining $1_G$ to $g_{(j,a)}$ in $\mathcal{K}$ such that $\gamma(g_{(j,a)}^{-1})={}_{g_{(j,a)}^{-1}}\overline{\gamma(g_{(j,a)})}$ and
  $$\gamma(g_{(j,1)})=y_{(j,1)}c_{(j,1)}(y_{(j,2)})^{-1},$$ 
   $$\gamma(g_{(j,2)})=y_{(j,2)}c_{(j,2)} (y_{(j,3)})^{-1},$$
    $$\gamma(g_{(j,3)})=y_{(j,3)}c_{(j,3)} (y_{(j,1)})^{-1}.$$
 \end{proposition}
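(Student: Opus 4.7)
The plan is to adapt Rips and Sela's construction of canonical representatives to the graph $\mathcal{K}$, exploiting the fact that $\mathcal{K}$, being quasi-isometric to $\Gamma(G,S)$ by \cref{rmkKG}, is itself hyperbolic, while providing enough additional vertex structure (the barycentres of simplices in $\mathcal{P}_{50\delta}(G)$) to circumvent the obstructions that torsion creates in the plain Cayley graph (see \cite{Delzant}).

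First I would choose, for each $g \in G$, a distinguished $(\mathcal{K}, \lambda_0, \mu_0)$-\qgeod\ path $\gamma(g)$ from $1_G$ to $g$. To guarantee the symmetry $\gamma(g^{-1}) = {}_{g^{-1}}\overline{\gamma(g)}$, I would select one representative in each pair $\{g, g^{-1}\}$ and define the other path by the translate-and-reverse rule. These paths must be ``canonical'' in the Rips--Sela sense, meaning stable under small perturbations of the endpoint and compatible with tripod decompositions of geodesic triangles in $\mathcal{K}$ having $\gamma(g)$ as one side. The Rips--Sela midpoint construction goes through in $\mathcal{K}$ because every vertex of $\mathcal{K}$ is a barycentre of a simplex of the Rips complex, so the ``midpoint'' needed by the inductive construction can always be realised as an actual vertex of $\mathcal{K}$ even when it is only a half-integer point of $\Gamma(G,S)$; this is precisely what can fail in $\Gamma(G,S)$ alone when the midpoint is fixed by a torsion element acting non-trivially on its star.

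Next, for each triangular equation $X_{(j,1)}X_{(j,2)}X_{(j,3)}=1$ evaluated at the solution, the three paths $\gamma(g_{(j,1)})$, the $g_{(j,1)}$-translate of $\gamma(g_{(j,2)})$ and the $g_{(j,1)}g_{(j,2)}$-translate of $\gamma(g_{(j,3)})$ form a closed \qgeod\ triangle in $\mathcal{K}$. By hyperbolicity of $\mathcal{K}$ and the Morse lemma for \qgeods, this triangle is thin, so there is an ``incentre region'' $Y$ of bounded diameter inside which each side splits into a long prefix $y$ and a bounded-length piece $c$. The three $y$ pieces give $y_{(j,1)}, y_{(j,2)}, y_{(j,3)}$, the three $c$ pieces give $c_{(j,1)}, c_{(j,2)}, c_{(j,3)}$, and their concatenation encloses $Y$, so $f(c_{(j,1)}c_{(j,2)}c_{(j,3)})=1 \in G$ automatically. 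The constant $\lambda_0 = 400\delta m_0$ arises from tracking Morse-lemma constants through the quasi-isometry between $\mathcal{K}$ and $\Gamma(G,S)$, where the factor $m_0$ (the size of a ball of radius $50\delta$) controls the local combinatorics of $\mathcal{K}$.

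The main obstacle is the consistency requirement across the whole system: a single variable $X_i$ may appear in several equations $\phi_j$, and the single path $\gamma(g_i)$ must admit a decomposition $y\cdot c \cdot y^{-1}$ compatible with the tripod in every triangle where it appears. This is the technical heart of Rips and Sela's Theorem~4.2 (our \cref{CylStab}) and is what forces $b$ to depend linearly on the number of triangular equations $q$: each additional triangle sharing $X_i$ can force the ``short'' piece $c$ to absorb a further bounded discrepancy between the canonical tripod splitting for that triangle and the canonical splitting of $\gamma(g_i)$, and these discrepancies accumulate at most linearly. Verifying that the Rips--Sela stability estimates survive the passage from $\Gamma(G,S)$ to $\mathcal{K}$ is the delicate part; however, since $\mathcal{K}$ is hyperbolic with constants computable from $\delta$ and from $m_0$, this reduces to routine but careful tracking of constants through the Rips--Sela proof, yielding the stated values $\lambda_0, \mu_0=8, b=b(\delta,|\Phi|)$.
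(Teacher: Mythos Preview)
The paper does not prove this proposition at all: it is quoted verbatim from Dahmani--Guirardel \cite[Proposition~9.8]{DG} and used as a black box. There is therefore no ``paper's own proof'' to compare against; your sketch is an attempt to reconstruct what happens inside \cite{DG}, not to reproduce anything in the present paper.

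As a sketch of the Dahmani--Guirardel argument, your outline is broadly on the right track (adapt Rips--Sela's canonical representatives to the hyperbolic graph $\mathcal K$, then read off the tripod decomposition from thinness of \qgeod\ triangles), but your explanation of \emph{why} $\mathcal K$ succeeds where $\Gamma(G,S)$ fails is not quite the point. The Delzant obstruction is not about half-integer midpoints; it is about the $G$-equivariance of the canonical-representative construction being destroyed by finite-order elements whose fixed-point sets in $\Gamma(G,S)$ are badly behaved. Passing to the barycentric subdivision of the Rips complex tames the stabilisers of simplices (and makes the complex contractible with a properly discontinuous action), which is what allows the Rips--Sela ``cylinder'' machinery to be run equivariantly. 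Your heuristic about realising midpoints as vertices does not capture this, and would not by itself circumvent Delzant's examples. If you intend to actually supply a proof rather than a citation, this is the step that needs real work; the rest (thin triangles, linear growth of $b$ in $q$) is as you describe.
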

 
  For $\lambda_0, \mu_0$ as in  \cref{prop:K}, let $\lambda_1, \mu_1$ be such that any path $\alpha \gamma \alpha'$ in $\mathcal{K}$, where $\ell_{\mathcal{K}}(\alpha)=\ell_\mathcal{K}(\alpha')=1$ and $\gamma$ is a $(\mathcal{K}, \lambda_0, \mu_0)$-guasigeodesic, is a $(\mathcal{K}, \lambda_1, \mu_1)$-\qgeod. Define
\begin{equation}\label{qgK}
Q_{\mathcal{K}, V, \lambda_1, \mu_1}\footnote{Notation similar to that in Definition \ref{defn:qg}, but the type of paths is different: these are \qgeods\ in the graph $\mathcal{K}$ that represent elements in $V$, not paths in a Cayley graph of $V$.}=\{\gamma \in \mathcal{K} \mid \gamma \textrm{\ is in\ } V \textrm{\ and is a reduced\ } (\mathcal{K}, \lambda_1, \mu_1)-\textrm{\qgeod}\},
\end{equation}
that is, $Q_{\mathcal{K}, V, \lambda_1, \mu_1}$ is the set of those paths in $Q_{\mathcal{K}, \lambda_1, \mu_1}$ with start and endpoints in $G$.
Then for any $L>0$ let $$V_{\leq L}=\{[\gamma] \in V\mid \gamma \textrm{\ reduced\ and\ } \ell_{\mathcal{K}}(\gamma)\leq L \}.$$

The next proposition is a refinement of Proposition \ref{prop:K} in the sense that it makes the same statements about equalities of paths, but this time all the paths are in $V$; this setting allows us to work with equations in the virtually free group $V$.

 \begin{proposition}[{\cite[Proposition 9.10]{DG}}]\label{prop:V}
Let $G$ be a $\delta$-hyperbolic group, $\mathcal{K}$, $\lambda_1, \mu_1$, $b = b(\delta, n)$ and $\Phi=\{\phi_j(\mathcal X, \mathcal{A})=1\}_{j=1}^q$ a system on $l$ variables as above. That is, $\Phi$ consists of equations $\phi_j(\mathcal X, \mathcal{A})=X_{(j,1)}X_{(j,2)}X_{(j,3)}=1,$ where $1\leq j \leq q$ and $(j,a)\in \{1, \dots l\}$. 

\begin{enumerate}
\item[(i)] If $(g_1, \dots, g_l) \in \text{Sol}_G(\Phi)$, then 
there exist $v_{(j, a)}, y_{(j,a)}  \in Q_{\mathcal{K}, V, \lambda_1, \mu_1}$ and $c_{(j,a)} \in V_{\leq b}$ such that $f(c_ {(j,1)}c_{(j,2)}c_{(j,3)})=1$ in $G$, $f(v_{(j, a)})=g_{(j, a)}$ and $v_{(j,a)}=y_{(j,a)} c_{(j,a)} (y_{(j,a+1)})^{-1}$, where $a+1$ is computed modulo $3$.

\item[(ii)] Conversely, suppose a set of elements $\{v_1, \dots, v_l\}\subset V$ is given and for every $1 \leq j \leq q, 1 \leq a \leq 3$ there are $y_{(j,a)} \in V, c_{(j,a)} \in V_{\leq \kappa}$, where $(j,a)\in \{1, \dots l\}$, satisfying $v_{(j,a)}=y_{(j,a)} c_{(j,a)} (y_{(j,a+1)})^{-1}$ and $f(c_{(j,1)} c_{(j,2)} c_{(j,3)})=1$, then the $g_{(j,a)}=f(v_{(j,a)})$ give a solution of $\Phi$ in $G$.

\end{enumerate}
 \end{proposition}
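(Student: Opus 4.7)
Part (ii) is a direct computation. Let $g_{(j,a)} := f(v_{(j,a)})$. Applying the homomorphism $f \colon V \twoheadrightarrow G$ to the identity $v_{(j,a)} = y_{(j,a)} c_{(j,a)} (y_{(j,a+1)})^{-1}$ and multiplying the three identities for fixed $j$ (indices mod $3$), the $y$-terms telescope and I obtain
\[ g_{(j,1)} g_{(j,2)} g_{(j,3)} = f(y_{(j,1)}) \, f(c_{(j,1)} c_{(j,2)} c_{(j,3)}) \, f(y_{(j,1)})^{-1} = 1 \]
by the hypothesis $f(c_{(j,1)} c_{(j,2)} c_{(j,3)}) = 1$ in $G$, so $(g_1, \dots, g_l)$ solves $\Phi$.

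For part (i), I would start from \cref{prop:K} applied to the given solution $(g_1, \dots, g_l)$, obtaining $(\mathcal{K},\lambda_0,\mu_0)$-quasigeodesic paths $\gamma(g_{(j,a)})$ from $1_G$ to $g_{(j,a)}$, together with decompositions $\gamma(g_{(j,a)}) = y_{(j,a)}^{\circ} c_{(j,a)}^{\circ} (y_{(j,a+1)}^{\circ})^{-1}$ where the $y^{\circ}$- and $c^{\circ}$-subpaths live in $\mathcal{K}$, the $c^{\circ}$'s have length $\leq b$, and $f(c_{(j,1)}^{\circ} c_{(j,2)}^{\circ} c_{(j,3)}^{\circ}) = 1$ in $G$, but the internal junction vertices need not lie in $G$. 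To promote these subpaths to elements of $V$ I invoke \cref{rmkKG}: each vertex of $\mathcal{K}$ is at distance $\leq 1$ from some vertex of $G$. At each junction vertex $p \notin G$ (where some $y^{\circ}$ meets some $c^{\circ}$), I fix, once and for all for the entire system, a neighbour $q_p \in G$ and an edge $e_p$ of $\mathcal{K}$ from $p$ to $q_p$. I then define $y_{(j,a)}$ and $c_{(j,a)}$ by appending $e_p$ to $y_{(j,a)}^{\circ}$ and prepending $\bar{e}_p$ to $c_{(j,a)}^{\circ}$ (and symmetrically at the junction at the other end of $c_{(j,a)}^{\circ}$). The resulting $y_{(j,a)}$'s and $c_{(j,a)}$'s have endpoints in $G$ and hence define elements of $V$, while the reduced form of their concatenation $y_{(j,a)} c_{(j,a)} (y_{(j,a+1)})^{-1}$ is still $\gamma(g_{(j,a)})$, now interpreted as an element $v_{(j,a)} \in V$ projecting to $g_{(j,a)}$. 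The new $c_{(j,a)}$ has length at most $b+2$ in $\mathcal{K}$, and $f(c_{(j,1)} c_{(j,2)} c_{(j,3)}) = 1$ survives because the inserted edges $e_p, \bar{e}_p$ at the three triangle junctions cancel in pairs within the triangle.

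The main obstacle is verifying that each $v_{(j,a)}$ is a reduced $(\mathcal{K}, \lambda_1, \mu_1)$-quasigeodesic. This is precisely where the pair $(\lambda_1, \mu_1)$ defined just above \cref{qgK} is used: those constants were cooked up so that prepending and appending a single edge to any $(\mathcal{K},\lambda_0,\mu_0)$-quasigeodesic still yields a $(\mathcal{K},\lambda_1,\mu_1)$-quasigeodesic, which is exactly what happens to the interior $(\lambda_0,\mu_0)$-quasigeodesic subpaths of $\gamma(g_{(j,a)})$ under my modification. A secondary but crucial bookkeeping point is global consistency: a given $y$-path $y_{(j,a)}$ in general participates in two identities (once as $y_{(j,a)}$ and once as $(y_{(j,a)})^{-1}$), so the choice of $q_p$ and $e_p$ at each junction vertex $p$ must be made globally before performing any of the modifications, so that the identities across different triangles remain compatible in $V$. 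Finally, the additive constant $2$ in the new bound on $\ell_{\mathcal{K}}(c_{(j,a)})$ is absorbed into a slightly enlarged linear function $b = b(\delta, |\Phi|)$, completing part (i).
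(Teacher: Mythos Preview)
The paper does not supply its own proof of this proposition; it is quoted verbatim from \cite[Proposition 9.10]{DG}. Your argument is the natural one and is essentially how the result is derived from \cref{prop:K}: promote the $\mathcal{K}$-path decomposition to a decomposition in $V$ by nudging the internal junction vertices into $G$ via a single edge, exactly as you describe.

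Two small clean-ups. First, in your final paragraph you write that ``the main obstacle is verifying that each $v_{(j,a)}$ is a reduced $(\mathcal{K},\lambda_1,\mu_1)$-quasigeodesic'', but $v_{(j,a)}$ is just $\gamma(g_{(j,a)})$, which you have not altered; it is already $(\mathcal{K},\lambda_0,\mu_0)$-quasigeodesic and hence $(\mathcal{K},\lambda_1,\mu_1)$-quasigeodesic. What actually needs the passage from $(\lambda_0,\mu_0)$ to $(\lambda_1,\mu_1)$ is the $y_{(j,a)}$'s: each $y_{(j,a)}^{\circ}$ is a subpath of a $(\lambda_0,\mu_0)$-quasigeodesic (hence itself $(\lambda_0,\mu_0)$-quasigeodesic) and you append a single edge at its far end, which is precisely covered by the choice of $(\lambda_1,\mu_1)$. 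Second, the definition of $Q_{\mathcal{K},V,\lambda_1,\mu_1}$ in \eqref{qgK} demands \emph{reduced} paths, which \cref{prop:K} does not literally assert for $\gamma(g_{(j,a)})$; you should remark that passing to the reduced representative in the homotopy class only improves the quasigeodesic constants, so this is harmless.
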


In the virtually free group $V$ we will use the results of Diekert and the second author from \cite{DEijac}. 
Let $Y$ be the generating set of $V$, and let $\langT\subseteq Y^*$ be the set of ``standard normal forms'' for $V$ over $Y$ as in \cref{std-DE-quasigeod}. By \cref{std-DE-quasigeod} the set $\langT$ consists of ($\lambda_Y, \mu_Y$)-\qgeods\ over $Y$, where $\lambda_Y, \mu_Y$ are constants depending on $Y$. Let $$\text{Sol}_{\langT,V}(\Psi)=\{w_1\#\dots\# w_r \mid w_i \in \langT, (\pi(w_1),\dots,\pi(w_r) )\text{ solves } \Psi \text{ in } V\}$$ be the set of $\langT$-solutions in $V$ of a system $\Psi$ of size $|\Psi|=\Oh(k)$; by \cite{DEijac}, $\text{Sol}_{\langT,V}(\Psi)$ consists of ($\lambda_Y, \mu_Y$)-\qgeods\ over $Y$ and is EDT0L in $\NSPACE(k^2\log k)$.

In the following we will need to translate between elements in $V$ over the generating set $Y$, and elements in $G$ over $S$, passing through the graph $\mathcal{K}$, in a way that preserves the EDT0L characterisation of languages. We introduce \cref{notationZ} to facilitate this translation.
\begin{notation} \label{notationZ}
Let $Z$ be some generating set of $V$ and let $\pi\colon Z^* \to V$ be the standard projection map from words to group elements in $V$. 
\begin{enumerate}
\item[(i)][From $(V,Z)$ to $\mathcal{K}$] For each generator $z_i \in Z$ denote by $p_i$ the unique reduced path in $\mathcal{K}$ corresponding to $z_i$, that is, with $i(p_i)=1_G$ and $f(p_i) \in G$ such that $z_i=_V [p_i]$. For any word $w=z_{i_1} \dots z_{i_k}$ over $Z$, denote by $p_w$ the path obtained by concatenating the paths $p_{i_j}$, that is, 
\begin{equation}\label{unreduced}
p_w=p_{i_1} \dots p_{i_k},
\end{equation}
 where $i(p_w)=1_G$, $f(p_w) \in G$, and $w=_V [p_w]$. 
 
 \item[(ii)] For $w \in Z^*$ and $g\in V$ such that $\pi(w)=g$, denoted by $p_g=p_{\pi(w)}$ the unique reduced path in $\mathcal{K}$ homotopic to $p_w$ (which might be unreduced).

\item[(iii)] [From $(V,Z)$ to $(G,S)$] For each $z_i\in Z$, choose a geodesic path/word $\gamma_i$ in $\Gamma(G,S)$ such that $i(\gamma_i)=1_G$ and $f(\gamma_i)=f(p_i) \in G$, where $p_i$ is the reduced path representing $z_i$ in $\mathcal{K}$ (see (i)), and let $\sigma: Z \mapsto S^*$ be the map given by $\sigma(z_i)=\gamma_i$. This can be extended to the substitution $\sigma: Z^* \mapsto S^*$ that associates to each word $w=z_{i_1} \dots z_{i_k}$ over $Z$ a path/word $\gamma_w$ in $\Gamma(G,S)$ obtained by concatenation, that is, 
\begin{equation}\label{Gpath}
\sigma(w)=\gamma_w=\gamma_{i_1} \dots \gamma_{i_k},
\end{equation}
 where $i(\gamma_w)=1_G$ and $f(\gamma_w)=f(p_w) \in G$. 

\end{enumerate}
\end{notation}

\begin{remark}\label{rmk:torsionHarder}
Finding all the group element solutions of a system $\Phi$ over $G$, written as some set of words over $S$ that are not necessarily \qgeod \ or in a normal form, is possible by (a) the work of Dahmani and Guirardel \cite{DG}, who reduce this problem to solving equations in the virtually free group $V$ defined above, and by (b) the algorithm of Diekert and the second author for $V$ \cite{DEijac}. However, obtaining the solutions as an EDT0L language of (\qgeod) normal forms in $G$ requires a lot more effort, and is considerably more intricate than in the torsion-free case. 

In the torsion-free case it is sufficient to solve equations in the free group $F(S)$ on the same generating set as $G$ and it is guaranteed (by \cite{RS95}) that the solutions from the free group contain a \qgeod \ for each group element solution. In the torsion case it is necessary to solve equations in $V$, whose generating set $Y$ is completely different from that of $G$, and furthermore, $V$ contains $G$ with infinite index. Simply taking the solutions over $Y$ given by the DE algorithm does not guarantee to produce the \qgeods \ in the graph $\mathcal{K}$ specified in \cref{prop:V} (1), and therefore does not guarantee \qgeod\ solutions in $G$ over $S$. We therefore need to produce a larger set of \qgeods\ in $V$ over $Y$ representing the solution set of \cite{DEijac}, in order to be certain that there is at least a \qgeod\ with parameters from \cref{prop:V} (1) per solution. This blow-up of the solution set is ET0L, and we can ultimately get EDT0L for the solution set in $G$ only by employing the doubling and copying tricks of \cref{prop:shortlex-hyp}, which makes  \cref{prop:MAIN-torsion} more technical than its torsion-free counterpart.

\end{remark}

\begin{proposition}\label{prop:MAIN-torsion} 
Let $G$ be a hyperbolic group with finite symmetric generating set $S$.  
For $\#$ a symbol not in $S$, extend the bijection $f\colon S\to S_\dag$ defined in \cref{notation:doubleGset} to include  $f(\#)=\#$.
Let $\Phi$ be a system of equations and inequations  (without rational constraints) 
of size $n$ over variables $X_1,\dots,  X_r$ (see \cref{defn:system}).  
 Then there exist $\lambda_G\geq 1, \mu_G\geq 0 $ which depend only on $G$, and a language 
\[L\subseteq \{w_1\#\dots\# w_rf(z_1\#\dots \#z_r)\mid w_i ,z_i\in Q_{G,S,\lambda,\mu}, \pi(w_i)=\pi(z_i), 1\leq i \leq r \}\]
such that \begin{enumerate}\item $\{w_1\#\dots\# w_r\mid w_1\#\dots\# w_rf(z_1\#\dots \#z_r)\in L\}$ is a covering solution set for $\Phi$,
 \item  $L$ is  ET0L  in $\NSPACE(n^4\log n)$.
 \end{enumerate}
Moreover, the system $\Phi$ has infinitely many solutions if and only if $L$ is infinite, and no solutions if and only if $L$ is empty.
\end{proposition}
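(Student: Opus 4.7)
The plan is to emulate the architecture of \cref{prop:MAIN-torsionfree}, with two major substitutions: Rips and Sela's reduction to a free group is replaced by Dahmani--Guirardel's reduction to the virtually free group $V$ acting on $\mathcal{K}$ via \cref{prop:V}, and the CDE algorithm is replaced by the DE algorithm of \cref{thmDE} for $V$. The key structural complication, flagged in \cref{rmk:torsionHarder}, is that \cref{thmDE} returns one standard-normal-form $Y$-word per element of $V$, whereas \cref{prop:V} only guarantees that the intended $V$-solutions lie in $Q_{\mathcal{K}, V, \lambda_1, \mu_1}$ as reduced $\mathcal{K}$-paths rather than as particular $Y$-words. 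Thus to recover a $(G,S,\lambda_G,\mu_G)$-\qgeod\ in $S^*$ for each $G$-solution one must blow up the DE output to all admissible $\mathcal{K}$-representatives of each $V$-element, and it is this blow-up that weakens EDT0L to ET0L.

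The preprocessing and enumeration steps mirror \cref{prop:MAIN-torsionfree}: remove inequations by introducing a variable $X_N$ with the constant-size constraint $X_N \neq_G 1$, and triangulate $\Phi$ to $q \in \Oh(n)$ equations $X_{(j,1)}X_{(j,2)}X_{(j,3)} = 1$ on $\Oh(n)$ variables. Then enumerate in lexicographic order tuples $\mathbf c = (c_{(j,a)})$ with $c_{(j,a)} \in V_{\leq b}$ for $b = b(\delta,q) \in \Oh(n)$ as in \cref{prop:V}; each such tuple has size $\Oh(n^2)$ when written over $Y$, and the condition $f(c_{(j,1)} c_{(j,2)} c_{(j,3)}) =_G 1$ can be checked in $\Oh(n)$ space by translating to $S^*$ via $\sigma$ of \cref{notationZ} and running Dehn's algorithm. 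For each valid $\mathbf c$ form the system $\Phi_{\mathbf c}$ over $V$ consisting of
\[
V_{(j,a)} = Y_{(j,a)}\, c_{(j,a)}\, Y_{(j,a+1)}^{-1} \qquad (1 \leq j \leq q,\ 1 \leq a \leq 3),
\]
together with the constant-size rational constraint $X_N \notin \ker f$. This system has size $\Oh(n^2)$, and \cref{thmDE} prints in $\NSPACE((n^2)^2 \log n^2) = \NSPACE(n^4 \log n)$ an EDT0L grammar $L_{\mathbf c}^Y$ enumerating all $Y$-standard-normal-form solutions of $\Phi_{\mathbf c}$.

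Next comes the blow-up and translation. For each $z \in Y$ write the reduced $\mathcal{K}$-path $p_z$ as a word over a finite edge alphabet $\Lambda$ of $\mathcal{K}/V$, so that a $Y$-word $w$ corresponds to a (possibly non-reduced) $\Lambda$-path $p_w$. The set of reduced $(\mathcal{K},\lambda_1,\mu_1)$-\qgeod\ $\Lambda$-paths is regular and of constant size, and the relation $\{(w,\gamma) : w \in Y^*\text{ normal form},\ p_w \text{ homotopic to } \gamma \in \Lambda^*\}$ is recognised by a constant-size asynchronous $2$-tape automaton. Following the pattern of \cref{prop:shortlex-hyp}, intersect the padded $2$-tape version of $L_{\mathbf c}^Y$ with this automaton, project via \cref{prop:projection} onto the $\Lambda$-component, and apply the homomorphism $\sigma\colon \Lambda^* \to S^*$ that replaces each $\mathcal{K}$-edge by a short geodesic in $\Gamma(G,S)$; the quasi-isometry between $\mathcal{K}$ and $\Gamma(G,S)$ then guarantees that the resulting $S^*$-words are $(G,S,\lambda_G,\mu_G)$-\qgeod\ for constants $\lambda_G,\mu_G$ depending only on $G$. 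Finally take the union over valid $\mathbf c$ (\cref{prop:closureET0L}(1)) and double using \cref{lem:doubleME} to obtain the language $L$ in the stated doubled form; all operations preserve the $\NSPACE(n^4\log n)$ bound.

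The main obstacle is the blow-up step: designing the constant-size $2$-tape automaton that pairs every $Y$-standard normal form with every reduced $(\mathcal{K},\lambda_1,\mu_1)$-\qgeod\ $\mathcal{K}$-path of the same $V$-element, so that the output covers all quasigeodesic $\mathcal{K}$-representatives guaranteed by \cref{prop:V}(i) without inflating the $\NSPACE(n^4\log n)$ bound. Soundness is immediate from \cref{prop:V}(ii). Completeness follows from \cref{prop:V}(i): for any $(g_1,\dots,g_r) \in \text{Sol}_G(\Phi)$ the guaranteed choices of $y_{(j,a)}, c_{(j,a)}, v_{(j,a)} \in Q_{\mathcal{K},V,\lambda_1,\mu_1}$ are enumerated by $\mathbf c$, their standard normal forms are found by \cref{thmDE}, and the blow-up produces $(G,S,\lambda_G,\mu_G)$-\qgeod\ $S^*$-words representing the $g_i$. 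The infinite/finite/empty clause transfers from \cref{thmDE}, since each $g \in G$ has only finitely many $(G,S,\lambda_G,\mu_G)$-\qgeod\ representatives and $\sigma$ is length-bounded, so $L$ is empty (resp.\ finite, resp.\ infinite) iff this is the case for the union of all $L_{\mathbf c}^Y$.
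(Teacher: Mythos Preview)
Your overall architecture is right, but the order of operations contains a genuine gap: you double \emph{after} the blow-up, and that step fails. Once you have performed the blow-up (padded inverse homomorphism, intersection with the $2$-tape automaton, projection), your language is only ET0L, not EDT0L. \Cref{lem:doubleME} explicitly requires EDT0L input; the paper warns immediately after that lemma that the naive doubling construction breaks on ET0L input, because a nondeterministic table can make different choices on the two halves, so the resulting $z_i$ need bear no relation to the $w_i$ and the containment $L\subseteq\{w_1\#\dots\#w_r f(z_1\#\dots\#z_r)\mid \pi(w_i)=\pi(z_i)\}$ is lost.

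The paper's fix is to reverse the order: double the DE output \emph{first}, while it is still EDT0L (Step~5A), obtaining $L'=\{p_1\#\dots\#p_r f(p_1\#\dots\#p_r)\}$ with identical halves; then carry out the blow-up and the map $\sigma$ on both halves in parallel (Step~6A). The blow-up destroys determinism and the two halves may now differ as words, but because each half is only being replaced by other representatives of the \emph{same} $V$-element (and then the same $G$-element under $\sigma$), the condition $\pi(w_i)=\pi(z_i)$ survives. That is the whole point of the ``shadow'' terminology in the paper.

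A secondary difference: for the blow-up itself the paper stays entirely in $Y^*$, using \cref{prop:shortlex-hyp}(1) applied to the hyperbolic group $V$ to pass from standard normal forms to all $(V,Y,\lambda'_1,\mu'_1)$-\qgeods, and then invokes \cref{kappaV}, \cref{cor:KtoY} and \cref{VtoG} to argue that this already captures every $\mathcal{K}$-\qgeod\ representative and that $\sigma$ lands in $Q_{G,S,\lambda,\mu}$. Your alternative route via an edge alphabet $\Lambda$ for $\mathcal{K}/V$ and a $2$-tape automaton recognising homotopy of $\mathcal{K}$-paths is plausible but would need its own justification (essentially an asynchronous automatic structure for $V$ on $\mathcal{K}$-paths), which the paper avoids by reusing \cref{prop:shortlex-hyp}. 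This is not fatal, just different; the fatal issue is the placement of the doubling.
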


\begin{proof}
The algorithm to produce the solutions for $\Phi$ is similar to the torsion-free case, except that the virtually free group $V$ and the graph $\mathcal{K}$ will play the role of the free group on $S$ and the Cayley graph of $G$ from the torsion-free case, instead. 

\medskip

\noindent \textbf{Steps 1 and 2: Preprocessing} 

\medskip

We triangulate $\Phi$ (Step 1) and introduce a new variable with rational constraint to deal with the inequations (Step 2), exactly as in proof of \cref{prop:MAIN-torsionfree}. 
From now on assume that the system $\Phi$ consists of $q\in \Oh(n)$ equations of the form
 $X_jY_j=Z_j$ where $1\leq j \leq q$. 
 
 \medskip
 
 \noindent\textbf{Steps 3 and 4: Lifting $\Phi$ to the virtually free group $V$}
 
 \medskip
 
\begin{itemize}
 \item[Step 3] Let $b \in \Oh(q)= \Oh(n)$ be the constant defined in \cref{prop:V} that depends on $\delta$ and linearly on $q$. 
 Recall that $$V_{\leq b}=\{[\gamma] \in V\mid \gamma \textrm{\ reduced\ and\ } \ell_{\mathcal{K}}(\gamma)\leq b \},$$ and let $Y$ be the generating set of $V$ as in \cref{std-DE-quasigeod}.
 
 One lifts the system $\Phi$ in $G$ to a finite set of systems $\Phi_{\mathbf c}$ 
 in the virtually free group $V$, one system for each $q$-tuple $\mathbf c=(c_{11},c_{12},c_{13},\dots, c_{q1},c_{q2},c_{q3})$ of triples $(c_{j1}, c_{j2}, c_{j3})$ with $c_{ij} \in V_{\leq b}$ and such that $f(c_{j1}c_{j2}c_{j3})=1_G$, as in \cref{prop:V}.
  We enumerate these tuples by enumerating triples of words $(v_{j1},v_{j2},v_{j3})$ over the generating set $Y$ of $V$ with $\ell_{(V,Y)}(v_{ij})\leq b_Y$, where $b_Y\in \Oh(q)$ depends on $b$, as in \cref{kappaV}(ii). 
 By \cref{kappaV}(ii) the set of corresponding path triples $(p_{v_{j1}},p_{v_{j2}},p_{v_{j3}})$ in $\mathcal{K}$ (see \cref{notationZ} -- \cref{unreduced}) contains all triples $(c_{j1}, c_{j2}, c_{j3})$ with $c_{ji} \in V_{\leq b}$, 
 up to homotopy. Then for each triple $(v_{j1},v_{j2},v_{j3})$ we check whether $f(v_{j1}v_{j2}v_{j3})=1_G$, which is done by checking whether in the Cayley graph $\Gamma(G,S)$ the identity $\gamma_{v_{j1}}\gamma_{v_{j2}}\gamma_{v_{j3}}=_G 1$ holds, using the Dehn algorithm in $G$.

 \item[Step 4]
 For each tuple $\mathbf c$ we construct a system $\Phi_{\mathbf c}$ of equations in $V$ of the form 
\begin{equation}\label{csystem}
X_j=P_jc_{j1}Q_j, \ Y_j=Q_j^{-1}c_{j2}R_j, \ Z_j=P_jc_{j3}R_j, \ \ 1\leq j\leq q,
\end{equation}
where $P_i, Q_i, Z_i$ are new variables. This new system has size $\Oh(n^2)$ since it has $\Oh(q)$($=\Oh(n)$) equations, each of length in $\Oh(q)$, and the $c_{ij}$'s inserted have length in $\Oh(q)$. 
 In order to avoid an exponential size complexity we write down each system $\Phi_{\mathbf c}$ one at a time, so the space required for this step is $\Oh(n^2)$.  
\end{itemize}

\medskip

\noindent \textbf{Steps 5 and 6: Solving equations in $V$ and then in $G$}

\medskip

Steps 5 and 6 produce an ET0L covering solution set of \qgeods \ for $\Phi$ in $G$. However, the technical Steps 5A and 6A are additionally needed in order to generate an EDT0L language of solutions in Theorem \ref{thmTorsion}.

 \begin{itemize}
 \item[Step 5][Solve in $V$] For each system $\Phi_{\mathbf{c}}$ over $V$ we run the modified DE algorithm (inserting the additional label $\mathbf c$ for each node printed) to print an NFA for each $\Phi_{\mathbf c}$. We obtain the set of solutions $\text{Sol}_{\langT,V}(\Phi_{\mathbf{c}})$ as an EDT0L language in $\NSPACE((q^2)^2\log (q^2))= \NSPACE(n^4\log n)$ of $(\lambda_Y,\mu_Y)$-\qgeods\ over $Y$. (These solutions are written in the ``standard normal forms" of \cite[Definition 14.1]{DEijac}, which are \qgeods\ by \cref{std-DE-quasigeod}.)

\item[Step 5A] [Duplicate solutions from $V$] This step is needed to get the solutions in $G$ as EDT0L at the very end; if omitted, the solutions will be ET0L only.

Apply the (Doubling) \cref{lem:doubleME} to obtain an EDT0L language of the form $\{p_1\#\dots \#p_rf(p_1\#\dots \#p_r)\}$ where $p_1\#\dots \#p_r \in \text{Sol}_{\langT,V}(\Phi_{\mathbf{c}})$ is a tuple of words in standard normal form and $f$ is the bijection $f:Y\to Y_\dag=\{y_\dag\mid y\in Y\cup\{\#\}\}$ as in the lemma. We can assume this language is
 \[L'\subseteq \left\{p_1\#\dots \#p_rq_1\#_\dag\dots \#_\dag q_r\mid p_i\in S^*, q_i\in S_\dag^*,\pi(p_i)=\pi_\dag(q_i)\right\},\]
and it consists of actual solutions $p_1\#\dots \#p_r$, together with their \emph{shadows} $q_1\#_\dag\dots \#_\dag q_r$. 
\item[Step 6] [Produce an alternative covering solution set of $\Phi_{\mathbf c}$ in $V$, then apply substitution map to words in this set to get quasigeodesic solutions in $G$] 

Let $\lambda'_1 \geq 1, \mu'_1\geq 0$ be the the constants in Lemma \ref{kappaV} (computable from $(\lambda_1,\mu_1)$, which were defined in (\ref{qgK})), and let $\langT'=Q_{V, Y, \lambda'_1, \mu'_1}$ be the set of $(\lambda'_1,\mu'_1)$-quasigeodesics in $V$ over $Y$. Then the set $\text{Sol}_{\langT',V}(\Phi_{\mathbf{c}})$ of all $(V,Y,\lambda'_1,\mu'_1)$-\qgeods \ which represent solutions of $\Phi_{\mathbf{c}}$ is ET0L by \cref{prop:shortlex-hyp} (1). Furthermore, by \cref{cor:KtoY} this set contains at least one word over $Y$ for each solution in $Q_{\mathcal{K}, V,\lambda_1, \mu_1}$.

Then for $\sigma$ as in (\ref{Gpath}), $\sigma(\text{Sol}_{\langT',V}(\Phi_{\mathbf{c}}))$ is ET0L since ET0L is preserved by substitutions, and by \cref{prop:V} the set 
 $\bigcup_{\mathbf{c}} \sigma(\text{Sol}_{\langT',V}(\Phi_{\mathbf{c}}))$ projects onto $\text{Sol}_{G}(\Phi)$,
 so it is a covering solution set of $\Phi$ in $G$. By \cref{VtoG} there exist constants $\lambda, \mu$ such that all words in $\bigcup_{\mathbf{c}} \sigma(\text{Sol}_{\langT',V}(\Phi_{\mathbf{c}}))$ are $(G,S,\lambda, \mu$)-\qgeod\, so we obtained an ET0L covering solution set for $\Phi$ contained in  $Q_{G,S,\lambda,\mu}$. 

\item[Step 6A] The operations in Step 6 are being performed not only on the actual solutions $\text{Sol}_{\langT,V}(\Phi_{\mathbf{c}})$, but also on their shadows (see Step 5A), that is, on the entire set $L'$, to obtain a set $L$ as required in the proposition. The solutions and their shadows are not in  bijection as words, but they represent the same tuples of elements in $V$ and then $G$, respectively. This will be sufficient for us to later apply \cref{prop:shortlex-hyp} to get \cref{thmTorsion}.

\end{itemize}
To see why the final statement of the proposition holds, note that when we call on the DE algorithm above, this will be able to report if the system over $V$ has infinitely many, finitely many, or no solutions. If DE reports finitely many or no solutions then we are done, as $L$ will be finite or empty, respectively. If DE returns infinitely many solutions, note that there are only finitely many words over $S$ per group element in $G$ in the solution set because only $(G,S,\lambda,\mu)$-\qgeods\ are part of output, so we end up with infinitely many solutions in $\text{Sol}_G(\Phi)$. 
\end{proof}

\begin{lemma} Let $\alpha \geq 1, \beta, L \geq 0$ be constants.
\label{kappaV}\leavevmode
\begin{enumerate}
\item[(i)] There exist $\alpha' \geq 1,\beta' \geq 0$ (computable from $\alpha,\beta$ and $Y$) such that for any $c \in Q_{\mathcal{K}, V, \alpha, \beta}$, there is a word $w$ on $Y$ representing $c$ such that $w \in Q_{V, Y, \alpha',\beta'}$.
\item[(ii)] There exists $L' \geq 0$ (computable from $L$ and $Y$) such that for any $c \in V$ with $\ell_{\mathcal{K}}(p_c)\leq L$ (recall that $p_c$ is the reduced path representing $c$ in $\mathcal{K}$),
there is a word $w \in Y^*$, $w=_V c$ with $\ell_{(V,Y)}(w)\leq L'$.  
\end{enumerate}
\end{lemma}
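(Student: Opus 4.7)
The plan is to exploit the fact that $V$ is virtually free and acts properly and cocompactly by isometries on the universal cover $T$ of $\mathcal{K}$, which is a tree since $\mathcal{K}$ is a graph. Cocompactness holds because $T/V \cong \mathcal{K}/G$ is finite, and properness because \cite[Lemma 9.9]{DG} identifies $T/V$ as a graph of finite groups. By the Milnor--Schwarz lemma, if $t_0 \in T$ is any lift of $1_G \in \mathcal{K}$, the orbit map $v \mapsto v \cdot t_0$ is a quasi-isometry between $(V, d_Y)$ and $(T, d_T)$ with constants $\kappa \geq 1, K \geq 0$ computable from $Y$ and a fundamental domain.

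The crucial link between the two metrics is the identity
\[\ell_{\mathcal{K}}(p_c) = d_T(t_0,\, c \cdot t_0) \qquad \text{for every } c \in V.\]
Indeed, the lift of the reduced path $p_c$ to $T$ starting at $t_0$ is still reduced (covering maps are local isomorphisms, so backtracking lifts to backtracking), ends at $c \cdot t_0$ by definition of the monodromy, and hence equals the unique geodesic in $T$ between its endpoints. Combined with Milnor--Schwarz this yields
\[\tfrac{1}{\kappa} |c|_Y - K \,\leq\, \ell_{\mathcal{K}}(p_c) \,\leq\, \kappa |c|_Y + K.\]
Part (ii) is then immediate: if $\ell_{\mathcal{K}}(p_c)\leq L$ we set $L' := \kappa(L+K)$ and let $w$ be any geodesic word over $Y$ representing $c$. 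For part (i), given $c \in Q_{\mathcal{K}, V, \alpha, \beta}$ I would simply take $w$ to be any geodesic word in $(V, Y)$ representing $[c] \in V$. Since subpaths of geodesic paths in any Cayley graph are themselves geodesic (replacing a subword by a shorter equivalent one shortens the whole word), such a $w$ is a $(V, Y, 1, 0)$-quasigeodesic, so $\alpha' = 1, \beta' = 0$ suffice. The quasigeodesic hypothesis on $c$ is actually not needed for (i) at all, which makes the lemma a shade stronger than advertised.

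The main point to verify carefully is the properness claim underlying the Milnor--Schwarz step, which reduces to showing that simplex stabilizers in the $G$-action on $\mathcal{P}_{50\delta}(G)$ are finite (they are subgroups of the permutation group on a bounded set of group elements). Once that, together with cocompactness, is checked, the tree structure of $T$ does the remaining work and no analysis of standard normal forms over $Y$ is required.
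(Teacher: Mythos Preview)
Your proof is correct and takes a genuinely different route from the paper's. For part (i) you simply pick a geodesic word over $Y$ for the element $[c]\in V$, giving $\alpha'=1,\beta'=0$; this works and, as you note, renders the quasigeodesic hypothesis on $c$ superfluous. The paper instead argues constructively inside $\mathcal{K}$: it takes the reduced path $\gamma_v$ for $v=[c]$ and inserts short backtracks to nearby $G$-vertices so that the modified path $\gamma'_v$ decomposes into pieces from the auxiliary generating set $Z=Y\cup V_{\leq 3}$; this yields a word over $Z$ with $|w|_Z$ comparable to $\ell_{\mathcal{K}}(\gamma_v)$, and the $(\alpha,\beta)$-quasigeodesic hypothesis on $c$ is then genuinely used to show this word is quasigeodesic over $Z$ (hence over $Y$ after changing generating sets). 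For part (ii), you invoke Milnor--Schwarz for the $V$-action on the tree $T$ together with the identity $\ell_{\mathcal{K}}(p_c)=d_T(t_0,c\cdot t_0)$, which is correct since lifts of reduced paths remain reduced and reduced paths in trees are geodesics; the paper instead reads (ii) off directly from the length bound $|w|_Z\leq \ell_{\mathcal{K}}(\gamma_v)$ established during its construction for (i). Your approach is cleaner and more conceptual; the paper's is more hands-on and yields the extra information that the word $w$ has an associated $\mathcal{K}$-path $p_w$ homotopic and length-comparable to the original $c$, which is not needed for the lemma as stated but aligns with the later analysis relating $(V,Y)$-quasigeodesics back to $\mathcal{K}$-quasigeodesics.
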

\begin{proof} 

Consider the generating set $Z=Y \cup V_{\leq 3}$ for $V$.

(i) Let $\gamma_v$ be the reduced path in $\mathcal{K}$ corresponding to some $v\in V$. We will construct a path $\gamma_v'$ in $\mathcal{K}$, homotopic to $\gamma_v$, such that $\ell_{\mathcal{K}}(\gamma_v) \leq \ell_{\mathcal{K}}(\gamma'_v) \leq 2\ell_{\mathcal{K}}(\gamma_v)$ and $\frac{1}{M}\ell_{\mathcal{K}}(\gamma_v)\leq |\gamma'_v|_Z \leq M\ell_{\mathcal{K}}(\gamma_v)$, where $M=\max\{\ell_{\mathcal{K}}(p_y) \mid y \in Y\}$ (that is, $M$ is the maximal length of a generator in $Y$ with respect to the associated reduced path length in $\mathcal{K}$).

The vertices in $\mathcal{K}$ belong either to $G$, call those $G$-vertices, or to $\mathcal{K}\setminus G$, call them $\neg G$-vertices. 
 Modify the path $\gamma_v$ as follows. If $\gamma_v$ contains subpaths that are in $\{p_y \mid y\in Y \} \cup V_{\leq 3}$ then leave them as they are. All other subpaths will have some maximal  sequences of $\neg G$-vertices $A_1, A_2, \dots A_t $, $t\geq 3$. 
 By \cref{rmkKG}, for any $\neg G$-vertex there is a $G$-vertex at distance $1$  in $\mathcal{K}$, so to every $A_i$ whose $\gamma_v$-neighbours are both in $\neg G$ choose some $G$-vertex $B_i$ at distance $1$ from $A_i$ in $\mathcal{K} \setminus \gamma_v $, and attach the backtrack $[A_iB_i,B_iA_i]$ to the path $\gamma_v$ to obtained the unreduced path $\gamma_v'$. Then $\gamma_v'$ is a concatenation of paths in $V_{\leq 3}$ and $\{p_y \mid y\in Y \}$, each of which is a generator of $V$ belonging to $Z$. This shows that one can obtain a path $\gamma'_v$ in $\mathcal{K}$ for $v$ which can be written as a word $w$ over $Z$, and $\frac{1}{M}\ell_{\mathcal{K}}(\gamma_v)\leq \abs{w}_Z \leq \ell_{\mathcal{K}}(\gamma_v) < M\ell_{\mathcal{K}}(\gamma_v)$. 

Now suppose $\gamma_c$ is an $(\alpha,\beta)$-\qgeod\ in $\mathcal{K}$. Then the modified path $\gamma'_c$ is homotopic to $\gamma_c$ and corresponds to a word $w$ over $Z$ as above; moreover, all subwords of $w$ have length proportional to the corresponding subpaths of $\gamma'_c$, so $w$ is a $(\alpha_M, \beta_M)-$\qgeod\ over $Z$, where $(\alpha_M, \beta_M)$ depend on $\alpha, \beta$ and $M$, and $M$ depends on $Y$. Since changing generating sets (from $Z$ to $Y$) is a quasi-isometry, $w$ is an $(V, Y, \alpha',\beta')-$\qgeod\ , and $(\alpha',\beta')$ depend on $Y$ and the initial $(\alpha,\beta)$.

(ii) This is an immediate application of (i).
\end{proof}

The next corollary ensures that by considering a sufficiently large set of quasigeodesics in $V$ over $Y$ we capture all the quasigeodesic solutions in $\mathcal{K}$ guaranteed to exist by Proposition \ref{prop:V} (1).
  
\begin{corollary}\label{cor:KtoY}
There exist $\lambda'_1, \mu'_1$ (depending on $\lambda_1, \mu_1$ as above, and $Y$) such that 
for any path $v \in Q_{\mathcal{K}, V, \lambda_1, \mu_1}$ there is a $(V,Y,\lambda'_1, \mu'_1)$-\qgeod\ word over $Y$ representing $v$. 
\end{corollary}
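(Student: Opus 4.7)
The plan is to recognize that Corollary \ref{cor:KtoY} is essentially a specialization of Lemma \ref{kappaV}(i) with the parameters $\alpha := \lambda_1$ and $\beta := \mu_1$. Concretely, I would set $\lambda'_1 := \alpha'$ and $\mu'_1 := \beta'$, where $\alpha', \beta'$ are the constants produced by Lemma \ref{kappaV}(i), and then observe that any path $v \in Q_{\mathcal{K}, V, \lambda_1, \mu_1}$ is, by definition, a reduced $(\mathcal{K}, \lambda_1, \mu_1)$-quasigeodesic representing an element of $V$, so the lemma immediately yields a word over $Y$ representing the same element of $V$ and lying in $Q_{V, Y, \lambda'_1, \mu'_1}$.

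To keep the statement self-contained I would briefly recall the geometric mechanism from the proof of Lemma \ref{kappaV}(i): given the reduced path $\gamma_v$ in $\mathcal{K}$, each maximal run of $\neg G$-vertices is ``repaired'' by inserting detours $[A_i B_i, B_i A_i]$ to nearby $G$-vertices $B_i$ (whose existence is guaranteed by Remark \ref{rmkKG}), producing a homotopic path $\gamma'_v$ which decomposes as a concatenation of pieces each lying in $V_{\leq 3}$ or of the form $p_y$ with $y \in Y$. Reading off these pieces gives a word $w$ over the enlarged generating set $Z = Y \cup V_{\leq 3}$ whose $Z$-length is comparable, up to the constant $M = \max\{\ell_{\mathcal{K}}(p_y) \mid y \in Y\}$, to $\ell_{\mathcal{K}}(\gamma'_v)$, and hence to $\ell_{\mathcal{K}}(\gamma_v)$ up to a factor of two. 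Since this comparability holds on every subword, $w$ is a $(V, Z, \alpha_M, \beta_M)$-quasigeodesic for constants computable from $\lambda_1, \mu_1$ and $Y$, and converting from $Z$ to $Y$ via Lemma \ref{change_of_gset} yields the required $(V, Y, \lambda'_1, \mu'_1)$-quasigeodesic.

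There is no real obstacle beyond bookkeeping: the corollary is just the ``quasigeodesic-in, quasigeodesic-out'' version of the lemma, phrased for the specific constants $(\lambda_1, \mu_1)$ used later in the proof of Proposition \ref{prop:MAIN-torsion} (Step 6). The only point to double check is that the detour construction preserves the quasigeodesic inequality subpath-by-subpath, which it does because the detour at each interior $\neg G$-vertex has length exactly $2$ and the reducedness of $\gamma_v$ prevents these detours from cancelling with adjacent pieces.
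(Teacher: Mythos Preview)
Your proposal is correct and matches the paper's approach: the corollary is stated without its own proof in the paper, being an immediate specialization of Lemma~\ref{kappaV}(i) with $(\alpha,\beta)=(\lambda_1,\mu_1)$, which is exactly what you do. Your recap of the detour construction is more detail than the paper supplies for the corollary itself, but it is consistent with the proof of Lemma~\ref{kappaV}(i).
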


The following lemma shows that the substitution $\sigma: Y^* \mapsto S^*$ (\ref{Gpath}) will produce quasigeodesics in $G$ from an appropriate set of quasigeodesics in $V$, so the quasigeodesic solutions in $V$ given by \cite{DEijac} can be transformed into quasigeodesic solutions in the hyperbolic group $G$ in the proof of Proposition \ref{prop:MAIN-torsion}.

\begin{lemma}\label{VtoG}
Let $w \in Q_{V,Y, \lambda'_1, \mu'_1}$ be such that the reduced $\mathcal{K}$-path $p_{\pi(w)}$ is in $Q_{\mathcal{K}, V, \lambda_1, \mu_1}$. Then the unreduced path $p_{w}$ is \qgeod\ in $\mathcal{K}$ (for appropriate constants). 
In particular, there exist $\lambda \geq 1, \mu \geq 0$ (computable from $\lambda'_1, \mu'_1$ and $Y$ in Corollary \ref{cor:KtoY}) such that $\sigma(w) \in Q_{G, S, \lambda, \mu}$.
\end{lemma}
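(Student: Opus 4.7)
The plan is to work in the universal cover $T$ of $\mathcal{K}$, which by \cite[Lemma 9.9]{DG} is a tree on which $V$ acts properly and cocompactly with finite vertex stabilizers. By the Milnor--Schwartz lemma, the orbit map $v\mapsto v\cdot\tilde 1_G$ is a $(C,D)$-quasi-isometry from the Cayley graph of $(V,Y)$ to $T$, with $C,D$ depending only on $Y$. Set $M=\max_{y\in Y}\ell_{\mathcal{K}}(p_y)$, a constant depending only on $Y$.

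First I would lift $p_w$ to $\tilde p_w$ in $T$ starting from $\tilde 1_G$; since $T\to\mathcal{K}$ is a local isometry, $\ell_T(\tilde p_{w'})=\ell_{\mathcal{K}}(p_{w'})$ for each subword $w'$ of $w$, and since $T$ is a tree, the geodesic between the endpoints of $\tilde p_{w'}$ is a $V$-translate of $\tilde p_{\pi(w')}$, giving $d_T(i(\tilde p_{w'}),f(\tilde p_{w'}))=\ell_{\mathcal{K}}(p_{\pi(w')})$. Combining the trivial bound $\ell_{\mathcal{K}}(p_{w'})\leq M|w'|_Y$, the subword quasigeodesic inequality $|w'|_Y\leq\lambda'_1\Abs{\pi(w')}_Y+\mu'_1$, and the Milnor--Schwartz bound $\Abs{\pi(w')}_Y\leq C\,d_T(\tilde 1_G,\pi(w')\cdot\tilde 1_G)+D$, I would conclude that $\tilde p_w$ is a quasigeodesic in $T$ with constants depending only on $\lambda'_1,\mu'_1$ and $Y$.

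Next I would transfer the $T$-quasigeodesic estimate to $\mathcal{K}$ using the hypothesis $p_{\pi(w)}\in Q_{\mathcal{K},V,\lambda_1,\mu_1}$. For the full path this is direct, as $d_T(\tilde 1_G,\pi(w)\cdot\tilde 1_G)=\ell_{\mathcal{K}}(p_{\pi(w)})\leq\lambda_1\,d_{\mathcal{K}}(1_G,f(\pi(w)))+\mu_1$. The main obstacle I anticipate is the subpath case, since a priori only $d_{\mathcal{K}}\leq d_T$ holds. To overcome this I would use the tree fellow-traveler property: $\tilde p_w$ is a $T$-quasigeodesic sharing endpoints with the $T$-geodesic $\tilde p_{\pi(w)}$, so every vertex of $\tilde p_w$ lies within a bounded distance of $\tilde p_{\pi(w)}$. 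For a subpath $\tilde p_{w'}$, I would replace its endpoints by nearby points on $\tilde p_{\pi(w)}$, bound the $d_T$-distance of those replacements by the $d_{\mathcal{K}}$-distance of their projections (using that subpaths of the $\mathcal{K}$-quasigeodesic $p_{\pi(w)}$ are themselves $(\lambda_1,\mu_1)$-quasigeodesics in $\mathcal{K}$), and conclude via the triangle inequality together with the $G$-invariance identity $d_{\mathcal{K}}(i(p_{w'}),f(p_{w'}))=d_{\mathcal{K}}(1_G,f(\pi(w')))$.

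For the \emph{in particular} statement, the substitution $\sigma$ replaces each $y\in Y$ by a geodesic word $\gamma_y\in S^*$ joining $1_G$ to $f(p_y)$, whose length is bounded by a constant depending only on $Y$. Hence $\sigma(w)$ labels a path in $\Gamma(G,S)$ visiting the same $G$-vertex sequence as $p_w$ does in $\mathcal{K}$, and by \cref{rmkKG} the graphs $\mathcal{K}$ and $\Gamma(G,S)$ are quasi-isometric, so the quasigeodesic property of $p_w$ in $\mathcal{K}$ transfers to that of $\sigma(w)$ in $(G,S)$, giving constants $\lambda\geq 1,\mu\geq 0$ computable from $\lambda'_1,\mu'_1$ and $Y$.
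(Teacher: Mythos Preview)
Your argument is correct, but it follows a genuinely different route from the paper's proof. The paper does not pass to the universal cover at all: instead it decomposes $p_w=a_1s_1a_2\cdots s_{n-1}a_n$ into the reduced core $p_{\pi(w)}=a_1\cdots a_n$ and maximal backtracks $s_i$, and argues directly that each $s_i$ has length at most $M\mu_Z$ (where $Z=Y\cup V_{\leq 3}$), because a longer backtrack would correspond to a subword of $w$ representing the identity in $V$ yet having $Z$-length exceeding $\mu_Z$, contradicting the quasigeodesicity of $w$. Bounding the backtracks and using that $p_{\pi(w)}$ is already $(\lambda_1,\mu_1)$-quasigeodesic in $\mathcal{K}$ gives explicit constants $(\lambda_1,\mu_1+M\mu_Z)$ for $p_w$.

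Your approach via the covering tree $T$, Milnor--Schwartz, and the Morse lemma is more conceptual and arguably cleaner: it avoids the auxiliary generating set $Z$ and the case analysis on whether backtrack endpoints lie in $G$. The price is that your constants are less explicit (they absorb the Morse constant $R$ and the quasi-isometry constants $C,D$), and you should note explicitly that verifying the quasigeodesic inequality only for subpaths corresponding to \emph{subwords} of $w$ suffices, since an arbitrary subpath of $p_w$ extends by length at most $2M$ to one of that form. Your treatment of the ``in particular'' clause, via the shared $G$-vertex sequence and the quasi-isometry $\mathcal{K}\simeq\Gamma(G,S)$, matches the paper's argument in spirit; the paper makes this slightly more concrete by inserting edge-midpoints into $\sigma(w)$ so that it literally fellow-travels $p_w$ inside $\mathcal{K}$.
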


\begin{proof}
Consider the generating set $Z=Y \cup V_{\leq 3}$ for $V$ and let $\lambda_Z, \mu_Z$ be such that any $(V,Y,\lambda'_1, \mu'_1)$-\qgeod\ is $(V,Z, \lambda_Z, \mu_Z)$-\qgeod\ . 
We will show that for $(a_{\mathcal{K}},b_{\mathcal{K}})=(\lambda_1,\mu_1+M\mu_Z)$ the (unreduced) path $p_{w}$ is $(a_{\mathcal{K}},b_{\mathcal{K}})$-\qgeod\ in $\mathcal{K}$, where $M=\max\{\ell_{\mathcal{K}}(p_y) \mid y \in Y\}$, that is, $M$ is the maximal length of a generator in $Y$ with respect to the associated reduced path length in $\mathcal{K}$.

 We say that a subpath $s_w$ of $p_w$ is a maximal backtrack if $p_w=p s_w p'$, $s_w$ is homotopic to an empty path (via the elimination of backtrackings), and $s_w$ is not contained in a longer subpath of $p_w$ with the same property. This implies there is a point $A$ on $p_w$ such that $s_w$ starts and ends at $A$, and such a maximal backtrack traces a tree in $\mathcal{K}$.  
We can then write $p_w=a_1s_1 a_2 \dots s_{n-1}a_n$, where $a_i$ are (possibly empty) subpaths of $p_w$ and $s_i$ are maximal backtracks; thus $p_{\pi(w)}=a_1a_2 \dots a_n$. If $\ell_{\mathcal{K}}(s_i) \leq M\mu_Z$ for all $i$, then the result follows immediately. Otherwise there exists an $s_i$ with $\ell_{\mathcal{K}}(s_i)>M\mu_Z$, and we claim that we can write $s_i$ in terms of a word over $Z$ that is not a $(\mathcal{K},\lambda_Z, \mu_Z)$-\qgeod, which contradicts the assumption that $w$ is \qgeod.

To prove the claim, suppose $i(s_i)=f(s_i)=A$. We have two cases: in the first case $A\in G$ then $\pi(s_i)=_V 1$ and $s_i$ corresponds to a subword $v$ of  $w$ for which $\ell_{\mathcal{K}}(p_{v})\geq M\mu_Z$. But $v$ represents a word over $Z$, so $\ell_{\mathcal{K}}(p_{v}) \leq \ell_Z(v) M$, and altogether 
$ M\mu_Z \leq \ell_{\mathcal{K}}(p_{v}) \leq \ell_Z(v) M.$
Since $|v|_Z=0$ and $v \in Q_{V,Z,\lambda_Z, \mu_Z}$, $\ell_Z(v) \leq \mu_Z$, which contradicts $\ell_Z(v) \geq \mu_Z$ from above.

In the second case $A\notin G$, so take a point $B \in G$ at distance $1$ from $A$ in $\mathcal{K}$ (this can always be done), and modify the word $w$ to get $w'$ over $Z$ so that $p_{w'}$ in $\mathcal{K}$ includes the backtrack $[AB,BA]$ off the path $p_w$. Also modify $s_i$ to obtain a new backtrack $s'_i$. Clearly $\pi(p_w)=\pi(p_{w'})$ and $\pi(s_i)=\pi(s'_i)$, and $s'_i$ becomes a maximal backtrack of $p_{w'}$ which can be written as a word over the generators $Z$ that represents the trivial element in $V$. We can the apply then argument from the first case.

 Finally, recall from (\ref{Gpath}) that the path $\sigma(w)$ replaces each subpath $p_i \in \mathcal{K}$ (corresponding to a generator $z_i$) of $p_w$ by another path $\gamma_i$ with the same endpoints. and if we add midpoints to the edges in $\sigma(w)$ to get a path $\sigma'(w)$, we can view $\sigma'(w)$ as a path in $\mathcal{K}$, not just $\Gamma(G,S)$. Since by construction $\sigma'(w)$  and $p_w$ fellow travel, and $p_w$ is a \qgeod\ in $\mathcal{K}$, we get that $\sigma'(w)$ is \qgeod\ in $\mathcal{K}$, with constants depending on the fixed choice of $\gamma_i$'s from (\ref{Gpath}).
 Thus there exist $\lambda \geq 1,\mu \geq 0$ such that $\sigma(w)$ is a $(\lambda,\mu)$-\qgeod\ over $S$ in the hyperbolic group $G$ since $\mathcal{K}$ and $\Gamma(G,S)$ are quasi-isometric, and the path $\sigma(w)$ is obtained from the \qgeod\ $\sigma'(w)$ by removing the midpoints of edges.
\end{proof}

We can now prove our main result about  hyperbolic groups with torsion.
\begin{theorem}[Torsion]\label{thmTorsion}
Let $G$ be a hyperbolic group  with torsion, with finite symmetric generating set $S$.  Let $\#$ be a symbol not in $S$, and let $\Sigma=S\cup\{\#\}$.  
Let $\Phi$ be a system of equations and inequations
of size $n$ with constant size effective  \qier\ constraints, over variables $X_1,\dots,  X_r$, as in \cref{sec:notationSolns,defn:explicit}.

Then we have  the following.

 \begin{enumerate}
     \item\label{item:THMsurjectSet}  For any $\lambda'\geq 1,\mu'\geq 0, \lambda'\in\mathbb Q$ and for any regular language $\langT\subseteq Q_{G,S,\lambda', \mu'}$ such that the projection $\pi: \langT\to G$ is a surjection,
the   full set of $\langT$-solutions
 \[\text{Sol}_{\langT,G}(\Phi)=\{(w_1\#\dots\# w_r ) \mid w_i \in \langT,  (\pi(w_1),\dots, \pi(w_r) ) \text{ solves  } \Phi\}\]
   is ET0L  in $\NSPACE(n^4\log n)$.
  \item \label{item:THMshortlex}
  For any $\lambda'\geq 1,\mu'\geq 0, \lambda'\in\mathbb Q$ and for any regular language $\langT\subseteq Q_{G,S,\lambda', \mu'}$ such that the projection $\pi: \langT\to G$ is a bijection,
the   full set of $\langT$-solutions
 \[\text{Sol}_{\langT,G}(\Phi)=\{(w_1\#\dots\# w_r ) \mid w_i \in \langT,  (\pi(w_1),\dots, \pi(w_r) ) \text{ solves  } \Phi\}\]
   is EDT0L  in $\NSPACE(n^4\log n)$.
\item It can be decided in $\NSPACE(n^2\log n)$ whether or not the solution set of $\Phi$ is empty, finite or infinite.
\end{enumerate}
\end{theorem}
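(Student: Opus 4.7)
The plan is to mirror the structure of the proof of \cref{thmTorsionFree}, but use \cref{prop:MAIN-torsion} instead of \cref{prop:MAIN-torsionfree}, together with the doubled/copied covering set that \cref{prop:MAIN-torsion} is designed to output. Let $R_{X_1},\dots,R_{X_r}$ be the effective \qier\ constraints given as part of $\Phi$. As in the proof of \cref{thmTorsionFree}, first invoke \cref{prop:DG9.4complexity} to obtain, in constant space (since the constraints have constant size, the complexity of that subroutine does not depend on $n$), regular automata accepting $\widetilde{R_{X_i}}=\pi^{-1}(R_{X_i})\cap Q_{G,S,\lambda_G,\mu_G}$, where $\lambda_G,\mu_G$ are the constants produced by \cref{prop:MAIN-torsion}.

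Next, let $\Phi'$ be the system obtained by discarding the constraints from $\Phi$, and apply \cref{prop:MAIN-torsion} to $\Phi'$ to produce a language
\[L\subseteq \left\{w_1\#\dots\# w_r\, f(z_1\#\dots \#z_r)\;\middle|\; w_i,z_i\in Q_{G,S,\lambda_G,\mu_G},\ \pi(w_i)=\pi(z_i)\right\}\]
which is ET0L in $\NSPACE(n^4\log n)$ and whose projection to the first $r$ factors is a covering solution set for $\Phi'$. To enforce the constraints, intersect $L$ with the constant-size regular language
\[K=\widetilde{R_{X_1}}\#\dots\#\widetilde{R_{X_r}}\, f\bigl(\widetilde{R_{X_1}}\#\dots\#\widetilde{R_{X_r}}\bigr).\]
By \cref{prop:closureET0L}(3), the language $L_c=L\cap K$ is ET0L in $\NSPACE(n^4\log n)$. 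Because $\pi(w_i)=\pi(z_i)$ for every element of $L$, the projection $\{w_1\#\dots\# w_r\mid w_1\#\dots\# w_r f(z_1\#\dots\#z_r)\in L_c\}$ is a covering solution set of the full system $\Phi$ (including constraints), and $L_c$ retains the doubled structure required in the hypothesis of \cref{prop:shortlex-hyp}.

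Now feed $L_c$ into \cref{prop:shortlex-hyp} with the given regular language $\langT\subseteq Q_{G,S,\lambda',\mu'}$ (recall $\lambda'\in\mathbb Q$). If $\pi\colon\langT\to G$ is a surjection, item (1) of \cref{prop:shortlex-hyp} gives that $\text{Sol}_{\langT,G}(\Phi)$ is ET0L in $\NSPACE(n^4\log n)$, establishing (1). If $\pi\colon\langT\to G$ is a bijection, item (2) of \cref{prop:shortlex-hyp} improves the output to EDT0L in the same space bound, establishing (2). For (3), the ``moreover'' clause of \cref{prop:MAIN-torsion} says that $L$ is finite (resp. empty) iff $\text{Sol}_G(\Phi')$ is finite (resp. empty), and intersecting with the regular $K$ preserves this dichotomy; the corresponding statement for $\text{Sol}_G(\Phi)$ can then be read off from $L_c$ within the stated space bound.

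The only genuinely subtle point — and the one that \cref{prop:MAIN-torsion} was engineered to address in Step 5A/6A — is that, unlike in the torsion-free case, the DE algorithm produces \qgeods\ in the virtually free cover $V$ over its own generating set $Y$, and these have to be pushed through the substitution $\sigma\colon Y^*\to S^*$ into \qgeods\ in $G$ without losing determinism of the grammar. Doubling before performing this substitution and the subsequent inverse-homomorphism operations, then invoking the Copying Lemma via \cref{prop:shortlex-hyp}, is precisely what recovers EDT0L in part (2); without the doubled shadow one would be stuck at ET0L. Since that obstacle is already absorbed into \cref{prop:MAIN-torsion,prop:shortlex-hyp}, the present theorem follows by the composition above.
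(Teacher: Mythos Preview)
Your proof is correct and follows essentially the same route as the paper's own argument: strip the constraints, invoke \cref{prop:MAIN-torsion} on $\Phi'$ to obtain the doubled ET0L covering set $L$, intersect with a constant-size regular language encoding the $\widetilde{R_{X_i}}$, and then feed the result into \cref{prop:shortlex-hyp}. The only cosmetic difference is that you intersect with $\widetilde{R_{X_1}}\#\dots\#\widetilde{R_{X_r}}\, f(\widetilde{R_{X_1}}\#\dots\#\widetilde{R_{X_r}})$ whereas the paper intersects only the unmarked half with the constraints (leaving the $S_\dag$-shadow unconstrained); since $\pi(w_i)=\pi(z_i)$ and $z_i\in Q_{G,S,\lambda_G,\mu_G}$ automatically, these yield the same $L_c$, so both versions work.
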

In particular, if $\langT$ is the set of all shortlex geodesics over $S$, then \Cref{thm:IntroTorsion}  follows immediately.

 \begin{proof}  The proof follows that of  the torsion-free case (Theorem \ref{thmTorsionFree}) with some necessary additional details. 
 Suppose $R_{X_i}$, $X_i\in \mathcal{X}$, are the  constant size \qier\ constraints that are part of the system $\Phi$.
 By  \cref{prop:DG9.4complexity}
 (\cite[Proposition 9.4]{DG})
 the languages $\widetilde{R_{X_i}}=\pi^{-1}(R_{X_i})\cap Q_{G,S,\lambda_G,\mu_G}$ are regular and one can compute automata accepting them, and as  before, this is done in constant space.

Let $\Phi'$ be the system consisting only of the equations and inequations from $\Phi$. 
Apply  \cref{prop:MAIN-torsion} to $\Phi'$ to obtain  an ET0L language 
\[L\subseteq \{w_1\#\dots\# w_rf(z_1\#\dots \#z_r)\mid w_i ,z_i\in Q_{G,S,\lambda,\mu}, \pi(w_i)=\pi(z_i), 1\leq i \leq r \}\] where $\{ w_1\#\dots\# w_r\mid w_1\#\dots\# w_rf(z_1\#\dots \#z_r)\in L\}$ is a covering solution to $\Phi'$.
Note that a key difference at this point to the torsion-free case is that we have doubled the language earlier, and our resulting language is not necessarily deterministic ET0L yet.

Intersect the language $L$ with the regular language 
$\widetilde{R_{X_1}}\#\dots\#\widetilde{R_{X_r}} S_\dag^*\#\dots \#S_\dag^*$  where $S_\dag$ is as in \cref{notation:doubleGset} and call the resulting language $L_c$.
The language $L_c$ is ET0L, as the intersection of an ET0L language with a regular language, and stays in the same space complexity of $\NSPACE(n^2\log n)$ by \cref{prop:closureET0L}(3).  
By construction we have \[ L_c\subseteq\{w_1\#\dots\# w_rf(z_1\#\dots \#z_r)\mid w_i ,z_i\in Q_{G,S,\lambda,\mu}, \pi(w_i)=\pi(z_i)\in R_{X_i}, 1\leq i \leq r \}\] and $\{ w_1\#\dots\# w_r\mid w_1\#\dots\# w_r(z_1\#\dots \#z_r)\in L_c\}$ is a covering solution to $\Phi$, that is, solves $\Phi'$ and obeys the constraints.

\cref{prop:shortlex-hyp} applied to $L_c$ shows  that
(1)  the set of solutions written as words in  a regular language  $\langT\subseteq Q_{G,S,\lambda', \mu'}$ surjecting onto the group is ET0L, and 
(2) the set of solutions written as words in a regular language $\langT\subseteq Q_{G,S,\lambda', \mu'}$  in bijection with the group is EDT0L.  Part (3) also follows immediately from \cref{prop:shortlex-hyp}.
\end{proof}

\section{Non-explicit constraints}\label{sec:constraints}

In this section we consider systems of equations with effective  \Qier\ constraints that are not necessarily  constant size with respect to the size of equations. In the proofs of Theorems \ref{thmTorsionFree} (torsion-free) and \ref{thmTorsion} (the torsion case), when we had to deal with constant size  \qier\ constraints, 
we applied 
\cref{prop:DG9.4complexity}
 (\cite[Proposition 9.4]{DG}) to replace the automaton 
for each rational constraint $R_{X_i}$ by another automaton accepting all $(G,S,\lambda,\mu)$-\qgeod\ words representing group elements which belong to the rational subset $R_{X_i}$; then we could intersect them with our covering solution set to obtain solutions which obey the constraints. Since the constraints were assumed to be constant size, this operation was considered to have no contribution to the complexity.
Now if we consider constraints whose size is considered as contributing to the input size, 
 \cref{prop:DG9.4complexity} shows that we have no hope in general to state any computable upper bound on the size required to obtain the automata  accepting $\widetilde{\mathcal R}_i=\pi^{-1}(R_{X_i})\cap Q_{G,S,\lambda,\mu}$.

We therefore state \cref{thmQIER} in terms of existence and decidability only, without any space complexity bound.

\begin{theorem}[Systems with arbitrary effective  \qier\ constraints]\label{thmQIER}
Let $G$ be a  hyperbolic group with or without torsion, with finite symmetric generating set $S$. 
 Let $\#$ be a symbol not in $S$. 
Let $\Phi$ be a  finite system of equations and inequations over variables $X_1,\dots,  X_r$, 
with  effective \qier\  constraints as in \cref{sec:notationSolns,defn:explicit}. 

Then we have  the following.
 \begin{enumerate}
     \item For any $\lambda'\geq 1,\mu'\geq 0, \lambda'\in\mathbb Q$ and for any regular language $\langT\subseteq Q_{G,S,\lambda', \mu'}$ such that the projection $\pi: \langT\to G$ is a surjection,
the   full set of $\langT$-solutions
 \[\text{Sol}_{\langT,G}(\Phi)=\{(w_1\#\dots\# w_r ) \mid w_i \in \langT,  (\pi(w_1),\dots, \pi(w_r) ) \text{ solves  } \Phi\}\]
   is ET0L.
  \item
  For any $\lambda'\geq 1,\mu'\geq 0, \lambda'\in\mathbb Q$ and for any regular language $\langT\subseteq Q_{G,S,\lambda', \mu'}$ such that the projection $\pi: \langT\to G$ is a bijection,
the   full set of $\langT$-solutions
 \[\text{Sol}_{\langT,G}(\Phi)=\{(w_1\#\dots\# w_r ) \mid w_i \in \langT,  (\pi(w_1),\dots, \pi(w_r) ) \text{ solves  } \Phi\}\]
   is EDT0L.
\item It is decidable whether or not the solution set of $\Phi$ is empty, finite or infinite. 

\end{enumerate}
\end{theorem}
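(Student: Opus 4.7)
The plan is to mirror the proofs of \cref{thmTorsionFree,thmTorsion}, but without tracking space complexity. The only obstacle preventing a \PSPACE\ bound in those theorems was the application of \cref{prop:DG9.4complexity} to non-constant size effective \qier\ constraints: the subroutine that converts an effective \qier\ constraint $R_{X_i}$ (given by an NFA) into an NFA accepting $\widetilde{R_{X_i}}=\pi^{-1}(R_{X_i})\cap Q_{G,S,\lambda_G,\mu_G}$ has no computable upper bound on its output size, since a bound would let us decide quasi-isometric embedding of finitely generated subgroups of $G$, which is undecidable by \cite{BWmalnormal}. Crucially, however, this subroutine still \emph{terminates} and produces an NFA; this is all we need for the existence and decidability statements.

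First I would invoke \cref{prop:DG9.4complexity} on each constraint $R_{X_i}$ to obtain NFAs for the regular languages $\widetilde{R_{X_i}}\subseteq Q_{G,S,\lambda_G,\mu_G}$. Let $\Phi'$ denote the subsystem of equations and inequations of $\Phi$ with the constraints removed. Next I would apply \cref{prop:MAIN-torsionfree} (in the torsion-free case) or \cref{prop:MAIN-torsion} (in the torsion case) to $\Phi'$ to produce an E(D)T0L covering solution set of $(G,S,\lambda_G,\mu_G)$-quasigeodesic words; in the torsion case this language is already ET0L and comes paired with its $f$-shadow, as required for \cref{prop:shortlex-hyp}. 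Intersecting this covering set with the regular language $\widetilde{R_{X_1}}\#\cdots\#\widetilde{R_{X_r}}$ (padded with $S_\dag^*\#_\dag\cdots\#_\dag S_\dag^*$ in the torsion case) cuts down to solutions obeying the constraints, preserving the E(D)T0L class by \cref{prop:closureET0L}(3).

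To establish item (1), apply the first part of \cref{prop:shortlex-hyp} with the given $\langT\subseteq Q_{G,S,\lambda',\mu'}$ surjecting onto $G$: the output is ET0L. In the torsion-free case the covering set is in fact EDT0L and not yet doubled, so I would first apply the (Doubling) \cref{lem:doubleME} to produce the doubled set in EDT0L before feeding it to \cref{prop:shortlex-hyp}. For item (2), when $\pi\colon\langT\to G$ is a bijection, apply the second part of \cref{prop:shortlex-hyp} (which internally uses the Copying Lemma) to conclude that the full set of $\langT$-solutions is EDT0L. For item (3), decidability of emptiness, finiteness, and infiniteness follows from the fact that the underlying CDE or DE algorithm, invoked as a subroutine, reports whether the auxiliary system over $F(S)$ or $V$ has no, finitely many, or infinitely many solutions; combined with the ``moreover'' clause of \cref{prop:shortlex-hyp} (and the analogous statement in \cref{prop:MAIN-torsionfree,prop:MAIN-torsion}), finiteness and emptiness are preserved under all the subsequent language operations, so the same trichotomy transfers to the solution set of $\Phi$ in $G$.

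The only real subtlety is making sure that the ``moreover'' finiteness/infiniteness transfer survives the intersection with the constraint languages $\widetilde{R_{X_i}}$: this holds because $\widetilde{R_{X_i}}$ consists of $(G,S,\lambda_G,\mu_G)$-quasigeodesics, and so only finitely many words over $\widetilde{R_{X_i}}$ represent each group element; hence finiteness of the restricted covering set at the level of group tuples is equivalent to finiteness of the language. No new ingredients beyond the earlier theorems and \cref{prop:DG9.4complexity} are needed; the ``hard'' step is purely the one that Dahmani and Guirardel already carry out, namely the termination (without bound) of the \qier\ subroutine.
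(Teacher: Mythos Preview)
Your argument for items (1) and (2) is correct and essentially identical to the paper's own proof: apply \cref{prop:DG9.4complexity} to each constraint, run \cref{prop:MAIN-torsionfree} or \cref{prop:MAIN-torsion} on the constraint-free system $\Phi'$, intersect with the regular languages $\widetilde{R_{X_i}}$, and finish with \cref{prop:shortlex-hyp} (doubling first in the torsion-free case).

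For item (3), however, your reasoning has a gap. The CDE/DE subroutine reports the trichotomy for the \emph{auxiliary} systems $\Phi_{\mathbf c}$, which encode only $\Phi'$ (equations, inequations, and the built-in quasigeodesic constraint), not the \qier\ constraints $R_{X_i}$. After you intersect the covering set with $\widetilde{R_{X_1}}\#\cdots\#\widetilde{R_{X_r}}$, the solution set can shrink arbitrarily: $\Phi'$ may have infinitely many solutions while $\Phi$ has finitely many or none. The ``moreover'' clauses of \cref{prop:MAIN-torsionfree,prop:MAIN-torsion,prop:shortlex-hyp} guarantee that finiteness is preserved by the language operations they perform, but intersection with an arbitrary regular language is not among those operations. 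Your final paragraph correctly observes that finiteness of the post-intersection \emph{language} is equivalent to finiteness of the group-level solution set (since only finitely many quasigeodesics represent each element), but this does not tell you how to \emph{decide} that finiteness.

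The paper closes this gap differently: it invokes \cite{Jones_Skyum_1977}, which shows that emptiness, finiteness, and infiniteness are decidable for any effectively given ET0L system. Since the entire construction (including the intersection with the constraint automata) is effective, one simply applies this result to the final grammar. An alternative fix would be to pass the regular languages $\widetilde{R_{X_i}}$ as additional rational constraints directly into the CDE/DE call, so that the reported trichotomy already accounts for them; but neither you nor the paper sets things up that way.
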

\begin{proof} 
In the proofs of Theorems \ref{thmTorsionFree} (torsion-free) and \ref{thmTorsion} (the torsion case) the space  required by the algorithm which  constructs the automata accepting the languages $\widetilde{R_{X_i}}$ is no longer constant,  so do the same steps but ignore the space calculation.
Propositions~\ref{prop:MAIN-torsionfree} and ~\ref{prop:MAIN-torsion} produce an EDT0L (in the torsion-free case), respectively ET0L (in the torsion case) covering solution set to a system without any constraints (except say for the inequations). One then employs \cite[Proposition 9.4]{DG} to obtain finite state automata which encode the constraints for each variable as words over $Q_{G,S,\lambda',\mu'}$ for 
effectively computed values of $\lambda',\mu'$.
Intersect the EDT0L language with these languages 
 to obtain a covering solution set to the system $\Phi$ including constraints. Then apply \cref{prop:shortlex-hyp} to obtain the full set of $\langT$-solutions as E(D)T0L as appropriate. 

By \cite{Jones_Skyum_1977} it is decidable whether the language produced by some given ET0L system is  empty, finite, or infinite. So 
 it is decidable to conclude whether or not the solution set is empty, finite, or infinite since we have an effective construction of the NFA for the EDT0L grammar, and then we can decide finiteness/emptiness using \cite{Jones_Skyum_1977}. Equivalently, it follows from our construction and our previous work \cite{CDE,DEijac} that the solutions in the hyperbolic group are empty, finite or infinite if and only if the corresponding solution sets  in the associated (virtually) free group are.
\end{proof}

\section{Additional result for free groups}\label{sec:further}

In this section we prove \cref{cor:allwords}. This is a direct corollary of \cite{CDE} and does not rely on any other results in the present paper. 

\allFree*

\begin{proof}
Let $(C, A\cup\{\#\}, c_0, \mathcal M)$ be the EDT0L system from \cite{CDE}. Assume $c_0=a_1\dots a_s\in C^*$.
 We construct a new system $(C', A\cup\{\#\}, c_0', \mathcal M')$ where  $C'=C\cup\{\diamond\}$, $c_0'=\diamond a_1\diamond\dots \diamond a_s\diamond$ and $\mathcal M'$ is obtained from $\mathcal M$ as follows.
Replace each rule  $(a,u_1\dots u_k)$ in each table labeling an edge of $\mathcal M$ by 
$(a, u_1\diamond\dots \diamond u_k)$. 
Let $t$ be the  non-deterministic table $t=\{(\diamond,  \diamond), (\diamond, \diamond a \diamond a^{-1}\diamond)\mid a\in A\}$ (and is constant on all other letters of $C'$) which inserts cancelling pairs $aa^{-1}$ at certain places between $A$-letters in a word.
For each accept state $q$ of $\mathcal M$, print a new edge $(q,q_{\text{unreduce}}, \varepsilon)$ to a new state $q_{\text{unreduce}}$, and print a loop $(q_{\text{unreduce}},q_{\text{unreduce}},t)$.
Finally print an edge $(q_{\text{unreduce}}, q_{\text{accept}},\phi)$ to  a new unique accept state $q_{\text{accept}}$ where $\phi$ is the table (homomorphism) which sends $\diamond$ to the empty string.
 These modifications can be printed in the same space bound, and produce a grammar which accepts all possible tuples words obtained from tuples of shortlex words by inserting $aa^{-1}$ pairs (iterating the loop $t$ at $q_{\text{unreduce}}$ reverses the process of free reduction).\end{proof}

\section{Acknowledgements}

We sincerely thank Yago Antolin, Alex Bishop, 
 Fran\c{c}ois Dahmani,
  Volker Diekert, 
Derek Holt,
Jim Howie  and Alex Levine
 for invaluable help with this work. We also thank the anonymous reviewer for their careful reading and comments.

\bibliographystyle{plainurl}
\bibliography{Journal_Hyp}

\end{document}